\theoremstyle{plain}
\newtheorem{prop}{Proposition}[section]
\newtheorem{theorem}[prop]{Theorem}
\newtheorem{corollary}[prop]{Corollary}
\newtheorem{definition}[prop]{Definition}
\newtheorem{example}[prop]{Example}
\newtheorem{lemma}[prop]{Lemma}
\newtheorem{remark}[prop]{Remark}
\numberwithin{equation}{section}
\newcommand*{\medcap}{\mathbin{\scalebox{1.5}{\ensuremath{\cap}}}}
\newcommand*{\medcup}{\mathbin{\scalebox{1.5}{\ensuremath{\cup}}}}
\newcommand{\twobar}{/\kern-0.2em/}
\newcommand{\threebar}{/\kern-0.2em/\kern-0.2em/}
\newcommand\norm[1]{\left\lVert#1\right\rVert}
\newcommand{\bP}{\mathbb{P}}
\newcommand{\bS}{\mathbb{S}}
\newcommand{\Z}{\mathbb{Z}}
\newcommand{\R}{\mathbb{R}}
\newcommand{\C}{\mathbb{C}}
\newcommand{\T}{\mathbb{T}}
\newcommand{\CP}{\mathbb{P}}
\newcommand{\bu}{\mathbf{u}}
\newcommand{\Hom}{\mathrm{Hom}}
\newcommand{\Kernel}{\mathrm{Ker}}
\newcommand{\U}{\mathrm{U}}
\newcommand{\fm}{\mathfrak{m}}
\newcommand{\Spec}{\operatorname{Spec}}
\begin{document}
	\title{SYZ mirror symmetry for hypertoric varieties}
	\author[Lau]{Siu-Cheong Lau}
	\address{Department of Mathematics and Statistics\\ Boston University}
	\email{lau@math.bu.edu}
		\author[Zheng]{Xiao Zheng}
	\address{Department of Mathematics and Statistics\\ Boston University}
	\email{xiaoz259@bu.edu}
	
	\begin{abstract}
		We construct a Lagrangian torus fibration on a smooth hypertoric variety and a corresponding SYZ mirror variety using $T$-duality and generating functions of open Gromov--Witten invariants.  The variety is singular in general. We construct a resolution of the variety using the wall and chamber structure on the base of the SYZ fibration.
	\end{abstract}
	
	\maketitle
	
	\section{Introduction}
	
	Mirror symmetry has made powerful and striking predictions in enumerative geometry.  It has led to groundbreaking results in algebraic and differential geometry, number theory, gauge theory and other branches of mathematics.  
	
	 Strominger-Yau-Zaslow \cite{SYZ} proposed that mirror symmetry can be understood as torus duality.  It conjectured a geometric construction of mirror manifolds and a canonical transformation to derive the homological mirror symmetry conjecture \cite{Kont-HMS}.  
	
	There have been a lot of breakthroughs in SYZ mirror symmetry.  The Gross-Siebert program \cite{GS07} gave a purely algebraic method to reconstruct the mirror manifolds.  Auroux \cite{Auroux07,Auroux09} provided a symplectic approach to SYZ and the Gross-Siebert program.  Moreover, Floer theory of wall-crossing was developed in Pascaleff-Tonkonog
	\cite{PT17} based on the work of Seidel \cite{Seidel-lect}.  Furthermore, based on the works of Fukaya-Oh-Ohta-Ono \cite{FOOO,FOOO-T}, Seidel \cite{Seidel-g2} and Akaho-Joyce \cite{AJ},  deformation and moduli theory of Lagrangian immersions are being developed by Cho-Hong-Lau \cite{CHL,CHL2,HL} which enhance and generalize the SYZ program.  Floer theory of generic singular SYZ fibers and its relation with wall-crossing were understood in \cite{HKL18,ERT}.  Finally, the family Floer theory initiated by Fukaya \cite{Fuk-famFl} and further developed by Tu \cite{Tu-reconstruction,Tu-FM} and Abouzaid \cite{Ab-famFl1,Ab-famFl2} provides a canonical functor which realizes the SYZ mirror transformation.  
	
	In view of these recent developments, SYZ mirror symmetry can be understood via a local-to-global approach.  First we need to understand SYZ transformation for local geometries around singular Lagrangians.  Second we need to glue the local mirrors using Floer-theoretical methods.  
	
	Toric Calabi-Yau manifolds and their mirrors provide a rich source of local models.  Wall-crossing and SYZ mirror construction have been understood due to the works of Auroux \cite{Auroux07, Auroux09}, Chan-Lau-Leung \cite{CLL}, Abouzaid-Auroux-Katzarkov \cite{AAK} and Chan-Cho-Lau-Tseng \cite{CCLT2}.  Using the local models, geometric transitions have been studied by Castano-Bernard and Matessi \cite{CBM3} and other groups \cite{L13,CPU,KL1,KL2,L18}.
	
	In this paper we study SYZ for the hyper-Kähler analog of toric manifolds.  Analogous to toric manifolds, they are obtained as hyper-Kähler quotients of $T^*\C^n$.  Typical examples of hypertoric manifolds include $T^*\C\bP^n$ and crepant resolutions of $A_n$ singularities.  We expect that they should provide useful local models to understand mirror symmetry for holomorphic symplectic manifolds.
	
	The structure of the paper is as follows. In Section \ref{review}, we review the definition of properties of hypertoric varieties. We construct Lagrangian fibrations on hypertoric manifolds in Section \ref{sec:fib}.  It uses the techniques of Gross \cite{Gross-eg} and Goldstein \cite{Goldstein} by symplectic reduction, and Abouzaid-Auroux-Katzarkov \cite{AAK} by Moser argument.  The Lagrangian fibrations have codimension-one amoeba-like discriminant loci. 
	
	We carry out the SYZ mirror construction for hypertoric varieties in Section \ref{SYZ} with a brief review of SYZ in Section \ref{sec:SYZ}. We first analyze the walls over which the Lagrangian torus fibers bound holomorphic discs of Maslov index $0$ (Section \ref{sec:wall}). The walls divide the base of a Lagrangian fibration into chambers (Section \ref{sec:chambers}). We then find all the holomorphic discs of Maslov index $2$ bounded by a fiber in each chamber (Section \ref{sec:disc2}) and show their regularity (Section \ref{regularity}).  As a result we obtain the generating functions of open Gromov--Witten invariants which are countings of these holomorphic discs (Section \ref{sec:GF}). We compactify the manifold in order to have sufficiently many boundary divisors (Section \ref{sec:cptfy}).
	
	Finally, in Section \ref{sec:mirror}, we construct a  SYZ mirror variety as the spectrum of the ring of generating functions associated to boundary divisors. By construction the mirror we obtain is affine, and is singular in general. It should be viewed as the affinization of a smooth mirror.  A resolution is necessary to better understand the geometry.   We glue together a resolution using local charts coming from the wall and chamber structure of the SYZ base.  The gluing can be explained using Floer-theoretical techniques as in \cite{Seidel-lect,PT17,HL}, but we will leave this in future work. The variety admits another resolution by a multiplicative hypertoric variety (Section \ref{sec:multiplicative}).  In general these resolutions are topologically different.  We conclude with the following theorems.
	
	\begin{theorem}
Let $\mathfrak{M}_{u,\lambda}$ be a smooth hypertoric variety, and $D^-\subset\mathfrak{M}_{u,\lambda}$ a certain anti-canonical divisor (given by Equation (\ref{D-})). The SYZ mirror $\mathfrak{M}_{u,\lambda}^{\vee}$ of the pair $(\mathfrak{M}_{u,\lambda},D^-)$ is the affine variety
\[
\mathfrak{M}_{u,\lambda}^{\vee}=\left\{((\bm{u}_1,\bm{v}_1,\ldots,\bm{u}_d,\bm{v}_d), (\bm{Z}_{1},\ldots,\bm{Z}_{d}))\in\C^{2d}\times (\C^{\times})^d \mid \bm{u}_i\bm{v}_i=\prod_{k\in \bm{j}}(1+\bm{Z}_k), i=1,\ldots,d\right\},
\]
which admits a canonical resolution given by the wall and chamber structure of the SYZ base. 
	\end{theorem}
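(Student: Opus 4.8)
The plan is to assemble the geometric input of Sections~\ref{sec:fib}--\ref{sec:mirror} into the explicit equations for $\mathfrak{M}_{u,\lambda}^\vee$. First I would fix the Lagrangian torus fibration on $\mathfrak{M}_{u,\lambda}$ from Section~\ref{sec:fib} and the anti-canonical divisor $D^-$ of Equation~(\ref{D-}), so that away from $D^-$ there is a nowhere-vanishing holomorphic volume form and the SYZ recipe of Section~\ref{sec:SYZ} applies. I would then invoke the wall structure of Section~\ref{sec:wall}: the fibers bounding Maslov index zero discs trace out codimension-one walls which subdivide the base into chambers (Section~\ref{sec:chambers}).

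Next, chamber by chamber, I would enumerate the Maslov index two discs bounded by a fiber (Section~\ref{sec:disc2}), use the regularity results of Section~\ref{regularity} to make the associated open Gromov--Witten invariants well defined, and collect them into generating functions (Section~\ref{sec:GF}). After the compactification of Section~\ref{sec:cptfy} there are enough boundary divisors, and the generating functions attached to them give precisely the functions $\bm{u}_i,\bm{v}_i$ and $1+\bm{Z}_k$ appearing in the statement: the $\bm{Z}_k$ are monomials in the complexified-area/holonomy variables, hence invertible, while $\bm{u}_i$ and $\bm{v}_i$ count discs in the $i$-th cotangent-fiber direction and are a priori only chamber-local. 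The SYZ mirror is then defined, as in Section~\ref{sec:mirror}, to be $\Spec$ of the subring of the wall-crossing ring generated by these functions.

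The core of the proof is to pin down the relations. For each $i$ I would show that crossing the wall labelled by $\bm{j}$ interchanges the two disc families contributing to $\bm{u}_i$ and $\bm{v}_i$ and introduces the wall-crossing factor $\prod_{k\in\bm{j}}(1+\bm{Z}_k)$; it follows that the product $\bm{u}_i\bm{v}_i$ is wall-independent and equals $\prod_{k\in\bm{j}}(1+\bm{Z}_k)$, which yields the $d$ displayed equations. One then checks, running through the chamber charts, that no further relations hold, so that the spectrum is exactly the affine subvariety of $\C^{2d}\times(\C^\times)^d$ in the statement. I expect this identification of the relations via wall-crossing, together with the bookkeeping needed to rule out hidden relations, to be the main obstacle; morally it is the ``open'' deformation of the moment-map equations defining the hypertoric quotient.

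Finally, for the canonical resolution I would use the chamber decomposition as an atlas: over the preimage of a chamber the local SYZ mirror is smooth, being a product of affine lines with algebraic tori since exactly one of $\bm{u}_i,\bm{v}_i$ is a free coordinate there, and the Floer-theoretic coordinate changes across the walls (in the spirit of \cite{Seidel-lect,PT17,HL}) glue these smooth charts. I would verify compatibility on triple overlaps so the glued scheme is separated, and that the tautological maps to the chart coordinates patch into a proper birational morphism onto $\mathfrak{M}_{u,\lambda}^\vee$; comparison with the multiplicative hypertoric resolution of Section~\ref{sec:multiplicative} gives a consistency check.
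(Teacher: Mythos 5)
For the affine variety itself your outline coincides with the paper's argument: Theorem~\ref{thm:SYZmir} is obtained exactly by combining the wall/chamber analysis, the classification of Maslov index two classes (Propositions~\ref{prop:maslov2}, \ref{prop:cptmaslov2}), regularity, the computation $n_\beta=1$ (Propositions~\ref{prop:gw}, \ref{prop:gw2}), and the resulting chamber-by-chamber formulas $\bm{u}_j=C_{\sigma,j}Z_{\sigma,j}\prod_{h_{j_k}=j}(1+\bm{Z}_{j_k})$ and $\bm{v}_j=\exp(-\int_{\CP^1_j}\bar\omega)\,C_{\sigma,j}^{-1}Z_{\sigma,j}^{-1}\prod_{k\in\bm{j},\,h_k\ne j}(1+\bm{Z}_k)$, whose product is manifestly chamber-independent and equals $\prod_{k\in\bm{j}}(1+\bm{Z}_k)$ after rescaling. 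Your "wall-crossing cancellation" phrasing is a repackaging of the same computation (a factor $1+\bm{Z}_k$ migrates between the expressions for $\bm{u}_j$ and $\bm{v}_j$ as one crosses $W_k$), so this part is fine.

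The genuine gap is in the resolution. You propose to use only the chamber charts, glued by wall-crossing transformations, and you assert that over a chamber the local mirror is "a product of affine lines with algebraic tori since exactly one of $\bm{u}_i,\bm{v}_i$ is a free coordinate." Neither claim is right: in the paper's construction each chamber chart is $U_{\bm{h}}\cong(\C^{\times})^{d}\times(\C^{\times})^{d}$, cut out by $\bm{u}^{(\bm{h})}_i\bm{v}^{(\bm{h})}_i=1$, i.e.\ an algebraic torus with \emph{no} affine-line directions, and gluing only these charts produces an open locus that misses precisely the points needed to resolve the singularities of $\mathfrak{M}_{u,\lambda}^{\vee}$ (e.g.\ in the $T^*\CP^2$ example, the exceptional locus over $\{\bm{Z}_1=-1,\ \bm{Z}_2=q^{\beta_{S_3}},\ \bm{u}_1=\bm{v}_1=0\}$). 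Consequently the map from your glued space to $\mathfrak{M}_{u,\lambda}^{\vee}$ cannot be proper and surjective, so it is not a resolution. The missing idea is Step~2 of Section~\ref{sec:mirror}: one must adjoin charts $U_{\Theta}$ indexed by the admissible strata $\Theta$ of the tropical hyperplane arrangement (these correspond to the singular, immersed SYZ fibers), with new affine coordinates $x^{(\Theta,\sigma_{j_i})}_{k}$ subject to $\prod_{k}x^{(\Theta,\sigma_{j_i})}_{k}=1+\bm{Z}_{j_i}$, glued to the adjacent chamber charts by the explicit monomial maps $\psi_{\bm{h},\Theta}$. These charts contain the points where two or more of the $x$-coordinates vanish, which lie in no chamber chart, and it is only after including them that one gets a smooth variety $\widetilde{\mathfrak{M}_{u,\lambda}^{\vee}}$ with $H^0(\widetilde{\mathfrak{M}_{u,\lambda}^{\vee}},\mathcal{O})=R$, so that $\widetilde{\mathfrak{M}_{u,\lambda}^{\vee}}\to\mathfrak{M}_{u,\lambda}^{\vee}$ is the affinization and a genuine resolution. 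Your consistency check against the multiplicative hypertoric variety is reasonable but cannot substitute for this step, since that resolution is in general topologically different from the one determined by the SYZ wall and chamber structure.
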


The notations are explained in Section \ref{sec:GF}.	
		
	\begin{theorem}
		Let $\mathfrak{M}$ be a smooth hypertoric variety which is obtained as a hyper-Kähler quotient of $T^*\C^n$ by a sub-torus $K\subset T^n$.  Its SYZ mirror is birational to the multiplicative hypertoric variety
		$\bm{\mu}^{-1}(q)//_{\chi} K_{\C}$
		where $\bm{\mu}$ is the multiplicative moment map, and $q\in K_\C$ is determined by the K\"ahler parameters of $\mathfrak{M}$, and $\chi\in\Hom(K_\C,\C^\times)$ is a generic character.
	\end{theorem}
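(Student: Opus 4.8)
The plan is to match the explicit affine presentation of the SYZ mirror obtained above against a presentation of the multiplicative hypertoric variety $\bm\mu^{-1}(q)//_{\chi} K_\C$, and to produce the birational equivalence as an isomorphism between dense open subsets. Observe first that $\mathfrak M^\vee_{u,\lambda}$ is already visibly rational of dimension $2d$: solving each defining equation for $\bm v_i$ identifies it birationally with $\C^d\times(\C^\times)^d$ in the coordinates $(\bm u_1,\ldots,\bm u_d)$ and $(\bm Z_1,\ldots,\bm Z_d)$. So the real content is to realize the multiplicative hypertoric variety, on a dense open set and in matching coordinates, as the same complete intersection, and along the way to pin down how $q$ depends on the K\"ahler parameters. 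We will use the combinatorial data of $\mathfrak M$: writing $r$ for the rank of $K$, the inclusion $K\into T^n$ gives an exact sequence of cocharacter lattices $0\to\Z^r\to\Z^n\xrightarrow{B}\Z^d\to 0$ with $d=n-r$, whose columns $u_1,\ldots,u_n$ are the normals of the defining arrangement, and whose dual $0\to\Z^d\to\Z^n\to\Z^r\to 0$ records the residual torus $T^d=T^n/K$ and the character lattice $\Hom(K_\C,\C^\times)\cong\Z^r$.

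First I would recall from Section~\ref{sec:multiplicative} that $\bm\mu^{-1}(q)//_{\chi} K_\C$ is assembled from $n$ copies of the multiplicative building block $\mathcal Y$, a $2$-dimensional variety with a $\C^\times$-action and a group-valued moment map $\mathcal Y\to\C^\times$, so that $\mathcal Y^n$ carries a $T^n$-action and $\bm\mu\colon\mathcal Y^n\to K_\C$ is the product of the building-block moment maps followed by the projection $(\C^\times)^n\to K_\C$ dual to $\Z^r\hookrightarrow\Z^n$. Next I would fix a vertex $p$ of the arrangement of $\mathfrak M$ (with $\chi$ selecting the ambient smooth model): the $d$ hyperplanes through $p$ have normals forming a lattice basis by smoothness, and on the associated $\chi$-semistable open subset of $\bm\mu^{-1}(q)$ the $K_\C$-action is free and admits a section normalizing $r$ of the building-block coordinates to $1$; this open subset is dense and descends to an affine chart in the surviving coordinates. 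Finally I would carry out the substitution: the $r$ equations $\bm\mu=q$ express $r$ of the building-block moment maps as $q$-multiples of Laurent monomials in the remaining ones, and — because the normalization is adapted to $p$ — feeding these into the remaining building-block relations yields exactly $d$ equations of the form $\bm u_i\bm v_i=\prod_{k\in\bm j}(1+\bm Z_k)$ for suitable relabellings $(\bm u_i,\bm v_i,\bm Z_i)$ of the coordinates and suitable index sets $\bm j$ dictated by the arrangement at $p$, the K\"ahler parameters of $\mathfrak M$ being absorbed into the $\bm Z$'s. Comparing with the preceding theorem identifies this chart with a dense open subset of $\mathfrak M^\vee_{u,\lambda}$, and since both varieties are irreducible of dimension $2d$ the birational equivalence follows; moreover, running the same bookkeeping over all vertices is expected to glue these charts into a proper birational morphism $\bm\mu^{-1}(q)//_{\chi} K_\C\to\mathfrak M^\vee_{u,\lambda}$, exhibiting the multiplicative hypertoric variety as a resolution.

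The main obstacle is this last combinatorial matching: arranging the gauge-fixing so that the relations $\bm\mu=q$ genuinely collapse to products $\prod_{k\in\bm j}(1+\bm Z_k)$ over the correct index sets, rather than to Laurent monomials, which forces a careful choice of coordinates adapted to the vertex $p$ and must be carried out compatibly with the dualization $0\to\Z^d\to\Z^n\to\Z^r\to 0$, with the explicit normalization $q=q(\lambda)$ in terms of the K\"ahler parameters, and with the correspondence between the GIT character $\chi$ and the chamber — all reconciled with the sign and normalization conventions of the disc-counting computation of Section~\ref{sec:GF}. A secondary point is to check that the chosen gauge-fixing locus is $\chi$-semistable and of full dimension in $\bm\mu^{-1}(q)$, so that the birational statement is meaningful; this reduces to the standard stability analysis for abelian GIT quotients, now applied against the group-valued constraint $\bm\mu=q$ rather than a linear moment-map condition.
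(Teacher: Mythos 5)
Your high-level strategy---exhibit dense open subsets of $\mathfrak{M}_{u,\lambda}^{\vee}$ (Theorem \ref{thm:SYZmir}) and of the multiplicative hypertoric variety in matching coordinates---is the right one, but it is not how the argument is actually closed in the paper, and the step that carries all the content is exactly the one you defer. The paper's proof of Proposition \ref{prop:birational} does not work vertex-by-vertex with $\chi$-semistable charts and gauge-fixing; it writes down one global ring homomorphism $\varphi:R\to\C[\bm{\mu}^{-1}(q)]^{K_{\C}}$ on $K_\C$-invariants, $\bm{u}_i\mapsto(-1)^{\mathrm{sgn}\left(\sum_{j}|\pi^*_{ji}|\right)}\bm{z}_i$, $\bm{v}_i\mapsto\bm{w}_i$, $\bm{Z}_i\mapsto-1-z_iw_i$, where $\bm{z}_i=\prod_j x_{ij}^{|\pi^*_{ji}|}$ and $\bm{w}_i=\prod_j y_{ij}^{|\pi^*_{ji}|}$ are the invariant monomials read off from the totally unimodular matrix of $\pi^*$; the mirror relations $\bm{u}_j\bm{v}_j=\prod_{k\in\bm{j}}(1+\bm{Z}_k)$ are then checked against the equations $\bm{\mu}=q$, which is what forces the normalization $q_\ell=(-1)^{\sigma_\ell+1}q^{\beta_{S_\ell}}$, and birationality follows because $\varphi$ becomes an isomorphism after localizing at $\bm{u}_i,\bm{v}_i,1+\bm{Z}_i$ on one side and at $\bm{z}_i,\bm{w}_i,z_iw_i$ on the other (Lemma \ref{lemma:gen}). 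The character $\chi$ never enters a semistability analysis: birationality is established for the affine quotient $X_{u,0,q}$, and for generic data $X_{u,\chi,q}\to X_{u,0,q}$ is the affinization map, so one simply composes.

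The genuine gap in your proposal is the central combinatorial matching, which you name as ``the main obstacle'' but do not resolve---and, as stated, the mechanism you describe does not quite work. If you normalize the $n-d$ coordinates complementary to a vertex using the free $K_\C$-action and then use the $n-d$ equations $\bm{\mu}=q$ to eliminate their partner coordinates, what remains is a free open subset of $\C^{2d}$ in the coordinates $(z_i,w_i)_{i\le d}$ (constrained only by $1+z_iw_i\ne 0$): no residual defining equations of the shape $\bm{u}_i\bm{v}_i=\prod_{k\in\bm{j}}(1+\bm{Z}_k)$ ``appear'' from the substitution. The relations can only be recovered as identities satisfied by explicitly chosen functions $\bm{u}_i,\bm{v}_i,\bm{Z}_k$ on that chart, and producing those functions---with the correct index sets $\bm{j}$, the correct signs, and the precise dependence $q=q(\lambda)$ on the K\"ahler parameters fixed in Section \ref{sec:GF}---is the entire proof; in practice this means reintroducing the invariant monomials $\bm{z}_i,\bm{w}_i$ and carrying out the verification that the paper does globally. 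The secondary points you mention (that your gauge-fixed locus is $\chi$-semistable, dense, and of full dimension in $\bm{\mu}^{-1}(q)$) are likewise asserted rather than argued, whereas the paper's localization-plus-affinization route avoids them altogether. Until that matching computation is actually performed, the proposal is a plan rather than a proof.
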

	
	Below we introduce some important related works and questions that we wish to understand in the future.
	
	Closed-string equivariant mirror symmetry for hypertoric manifolds was found by Mcbreen and Shenfeld \cite{MS}.  They derived a presentation of the $T^d\times\C^\times$-equivariant quantum cohomology of a hypertoric manifold and relate it with the Gauss-Manin connection of the mirror moduli.  To understand the equivariant quantum cohomology from the SYZ perspective in this paper, we need to study equivariant Floer theory.
	
	In a recent preprint \cite{MW18}, Mcbreen and Webster showed that a category of equivariant coherent sheaves on a hypertoric variety are derived equivalent to the category of DQ-modules on the corresponding \textit{Dolbeault hypertoric variety}, establishing a version of homological mirror symmetry in the reverse direction.  Dolbeault hypertoric varieties as defined in \cite{MW18} are analog of hyper-Kähler quotient of Ooguri-Vafa space and carry canonical special Lagrangian torus fibrations.  
	
	In a subsequent work \cite{GMW}, Gammage, Mcbreen and Webster proved homological mirror symmetry for multiplicative hypertoric varieties.  Moreover, they conjectured that multiplicative hypertoric varieties are complements of additive hypertoric varieties $\mathfrak{M}_{u,\lambda}$ of some anti-canonical divisors (Conjecture 1.7 of \cite{GMW}).  It is an interesting direction to understand the relation with the anti-canonical divisor $D^-$ used in this paper, and mirror transformation of objects from the SYZ perspective.
	
	Furthermore, we believe hypertoric varieties are useful to understand mirror symmetry for cotangent bundles of smooth flag varieties.  Toric degenerations of flag varieties were used to construct their mirrors by Nishinou-Nohara-Ueda \cite{NNU,NU}.  It is reasonable to expect that mirrors of the total spaces of cotangent bundles of flag varieties are closely related to the mirrors of (singular) hypertoric varieties.
		
	\subsection*{Acknowledgment}
	The first named author is grateful to Conan Leung for bringing his interest to mirror symmetry for hypertoric varieties.  The authors thank to Yoosik Kim and Hansol Hong for useful discussions.  The work of the first named author is partially supported by the Simons collaboration grant. 
	
	\section{Review of hypertoric varieties}
	\label{review}
	In this section, we review the definition and basic properties of hypertoric varieties. We refer to \cite{BD,HS,Proudfoot} for more detailed account of the subject. All material in this section, except Proposition \ref{prop:complement}, are from the existing literature.
	
	\subsection{Hypertoric varieties} \label{sec:hypertoric}

Let $\mathfrak{t}^n$ and $\mathfrak{t}^d$ be real vector spaces of dimension $n$ and $d$, respectively. Let $\mathfrak{t}^n_{\Z}\subset\mathfrak{t}^n$ and $\mathfrak{t}^d_{\Z}\subset\mathfrak{t}^d$ be the integer lattices. Let $\{e_1,\ldots,e_n\}\subset\mathfrak{t}^n_{\Z}$ be an integer basis and let $\{\check{e}_1,\ldots,\check{e}_n\}\subset (\mathfrak{t}^n_{\Z})^*$ be the dual basis. Given a collection $u=\{u_1,\ldots,u_n\}\subset\mathfrak{t}^d_{\Z}$ of $n$ integer vectors that span $\mathfrak{t}^d_{\Z}$ over $\Z$, we define a map $\pi:\mathfrak{t}^n\to\mathfrak{t}^d$ by $\pi(e_i)=u_i$. We have the following exact sequences:
\begin{equation}
\label{ses1}
0\longrightarrow\mathfrak{k}\overset{\iota}{\longrightarrow}\mathfrak{t}^n\overset{\pi}{\longrightarrow}\mathfrak{t}^d\longrightarrow 0,
\end{equation}
\begin{equation}
\label{ses2}
0\longleftarrow(\mathfrak{k})^*\overset{\iota^*}{\longleftarrow}(\mathfrak{t}^n)^*\overset{\pi^*}{\longleftarrow}(\mathfrak{t}^d)^*\longleftarrow 0,
\end{equation}
where $\mathfrak{k}=\ker{\pi}$, and (\ref{ses2}) is the dual sequence of \ref{ses1}. Exponentiating (\ref{ses1}) gives an exact sequence of real tori
\begin{equation}
\label{ses3}
0\longrightarrow K\longrightarrow T^n\longrightarrow T^d\longrightarrow 0.
\end{equation}

Let $T^*\C^n$ be equipped its standard hyper-Kähler structure. Let $(z,w)=(z_1,w_1,\ldots,z_n,w_n)$ be the standard coordinates on $T^*\C^n$.  We consider $T^*\C^n$ equipped with the Kähler form $\omega_{\R}$ 
\[
\omega_{\R}=\frac{\sqrt{-1}}{2}\sum_{i=1}^n (dz_i\wedge d\bar{z}_i+dw_i\wedge d\bar{w}_i),
\]
and holomorphic symplectic form $\omega_{\C}$ 
\[
\omega_{\C}=\sum_{i=1}^n dz_i\wedge dw_i.
\]
Let $\vec{t}=(t_1,\ldots,t_n)\in T^n$ act on $T^*\C^n$ by 
\[
\vec{t}\cdot(z,w)=(t_1z_1,t_1^{-1}w_1,\ldots,t_nz_n,t_n^{-1}w_n),
\]
preserving the hyper-Kähler structure. The hyper-Kähler moment map 
\[
(\mu_{\R},\mu_{\C}): T^*\C^n \to (\mathfrak{k})^*\oplus (\mathfrak{k}_{\C})^*
\] 
for the restriction to $K$ of the $T^n$-action on $T^*\C^n$ is given by
	\[
	\mu_{\R}(z,w)=\frac{1}{2}\sum_{i=1}^n(|z_i|^2-|w_i|^2)\iota^*\check{e}_i, \qquad \mu_{\C}(z,w)=\sum_{i=1}^n(z_iw_i)\iota^*_{\C}\check{e}_i.
	\]
		\begin{definition}
		\label{def:hypertoric}
		Given a collection of primitive integer vectors $u$ and parameters $\lambda=(\lambda_{\R},\lambda_{\C})\in(\mathfrak{k})^*\oplus (\mathfrak{k}_{\C})^*$, the 
		hyper-Kähler quotient 
		\[
		\mathfrak{M}_{u,\lambda}=\left(\mu_{\R}^{-1}(\lambda_{\R})\cap\mu_{\C}^{-1}(\lambda_{\C})\right)/K
		\]
		is called a \textit{hypertoric variety}\footnote{A usual convention is setting $\lambda_{\C}=0$ in the definition.  In this paper we work with a generic complex structure and do not make this assumption.}. 
	\end{definition}
	
	Alternatively, $\mathfrak{M}_{u,\lambda}$ can be constructed as the GIT quotient
	\[
	\mathfrak{M}_{u,\lambda}=\mu_{\C}^{-1}(\lambda_{\C})\twobar_{\lambda_{\R}} K_{\C}=\mathrm{Proj} \left(\bigoplus_{k=0}^{\infty}\C[\mu_{\C}^{-1}(\lambda_{\C})]^{\lambda_{\R}^k}\right),
	\]
	where $K_{\C}$ the complexification of $K$, and $\lambda_{\R}\in(\mathfrak{k})^*$ is understood as a character $\lambda_{\R}:K_{\C}\to \C^{\times}$.
	
	The quotient torus $T^d=T^n/K$ acts on $\mathfrak{M}_{u,\lambda}$ with the hyper-Kähler moment map 
	\[
	(\bar{\mu}_{\R},\bar{\mu}_{\C}):\mathfrak{M}_{u,\lambda}\to (\mathfrak{t}^d)^*\oplus (\mathfrak{t}^d_{\C})^*
	\]
     given by 
	\[
	(\bar{\mu}_{\R},\bar{\mu}_{\C})[z,w]=\frac{1}{2}\sum_{i=1}^n(|z_i|^2-|w_i|^2+\hat{\lambda}_{\R,i})\check{e}_i\oplus\sum_{i=1}^n(z_iw_i+\hat{\lambda}_{\C,i})\check{e}_i\in\Kernel{(\iota^*)}\oplus\Kernel{(\iota^*_{\C})}=(\mathfrak{t}^d)^*\oplus (\mathfrak{t}^d_{\C})^*,
	\]
	where $((\hat{\lambda}_{\R,1},\ldots,\hat{\lambda}_{\R,n}),(\hat{\lambda}_{\C,1},\ldots,\hat{\lambda}_{\C,n}))\in(\mathfrak{t}^n)^*\oplus (\mathfrak{t}^n_{\C})^*$ is a lift of $\lambda$. Note that this map is always surjective.
	
\begin{example}
	Let $\{u_1,\ldots,u_d\}\subset\mathfrak{t}^d$ be a primitive integer basis. Define the map $\pi:\mathfrak{t}^{d+1}\to\mathfrak{t}^d$ by $\pi(e_i)=u_i$ for $i=1,\ldots,d$, and $\pi(e_{d+1})=u_{d+1}:=\sum_{j=1}^d (-u_j)$. $K\hookrightarrow T^{d+1}$ is then the diagonal sub-torus. If we set $\lambda_{\R}\in(\mathfrak{k})^*$ to be a regular value, and $\lambda_{\C}=0$, then the hypertoric variety $\mathfrak{M}_{u,\lambda}$ is $T^*\mathbb{P}^d$ (equipped with the standard complex structure).
\end{example}
	
	\begin{example}
		Let $u_1\in\mathfrak{t}^1$ be a primitive integer vector. Define $\pi:\mathfrak{t}^{n+1}\to\mathfrak{t}^1$ by $\pi(e_i)=u_1$ for $i=1,\ldots,n+1$. $K\hookrightarrow T^{n+1}$ is then the subtorus
		\[
		K=\{(t_1,\ldots,t_{n+1})\in T^{n+1}| \prod_{i=1}^{n+1} t_i=1\}.
		\]
		For $\lambda_{\R}$ a regular value and $\lambda_{\C}=0$, the hypertoric variety $\mathfrak{M}_{u,\lambda}$ is $\widetilde{\C^2/\Z_{n+1}}$, the crepant resolution of $A_{n}$ singularity $\C^2/\Z_{n+1}$.	
	\end{example}
	
	\subsection{Hyperplane arrangements}
	Let $\mathfrak{M}_{u,\lambda}$ be a hypertoric variety. Denote by $\mathcal{H}_{\R}=\{H_{\R,i}\}_{i=1}^n$ and $\mathcal{H}_{\C}=\{H_{\C,i}\}_{i=1}^n$ the collections of hyperplanes
	\[
	H_{\R,i}=\{s\in(\mathfrak{t}^d)^*|\left<s,u_i\right>-\hat{\lambda}_{\R,i}=0\},
	\]
	and
	\[
	H_{\C,i}=\{v\in(\mathfrak{t}^d_{\C})^*|\left<v,u_i\right>-\hat{\lambda}_{\C,i}=0\}.
	\]
 $\mathcal{H}_{\R}$ and $\mathcal{H}_{\C}$ are called the \textit{associated hyperplane arrangements} of $\mathfrak{M}_{u,\lambda}$. The hyperplane arrangements $\mathcal{H}_{\R}$ and $\mathcal{H}_{\C}$ are independent of the choice of the lift of $\lambda$ up to a translation and determine $\mathfrak{M}_{u,\lambda}$ up to a canonical isomorphism.

The following definition is important for smoothness of hypertoric varieties.

\begin{definition}
A hyperplane arrangement $\mathcal{H}_{\R}$(resp. $\mathcal{H}_{\C}$) is called \textit{simple} if every subset of $k$ hyperplanes with nonempty intersection intersects in codimension $k$. $\mathcal{H}_{\R}$(resp. $\mathcal{H}_{\C}$) is called \textit{unimodular} if every collection of d linearly independent vectors $\{u_{i_1},\ldots,u_{i_d}\}$ spans $\mathfrak{t}^d_{\Z}$ over $\Z$. 
\end{definition}

\begin{remark}
\label{rmk:holfib}
The holomorphic moment map
$\bar{\mu}_{\C}:\mathfrak{M}_{u,\lambda}\to(\mathfrak{t}^d_{\C})^*$ is a holomorphic $(\C^{\times})^d$-fibration, i.e. generic fibers of $\bar{\mu}_{\C}$ are biholomorphic to $(\C^{\times})^d$. If $v_0\in (\mathfrak{t}^d_{\C})^*$ is a point such that $v_0\in\bigcap_{i\in I}H_{\C,i}$ for some nonempty subset $I\subset\{1,\ldots,n\}$ and $v_0\notin H_{\C,i}$ for $i\notin I$, then $\bar{\mu}_{\C}^{-1}(v_0)\cong (\C\cup_0\C)^{\min\{|I|,d\}}\times(\C^{\times})^{\max\{d-|I|,0\}}$, where $\C\cup_0\C$ denotes the union of two affine lines intersecting transversely at the origin.

\end{remark}

See Figure \ref{fig:hyperplane-arrangement} for examples of hyperplane arrangements.

\begin{figure}[htb!]
	\includegraphics[scale=0.5]{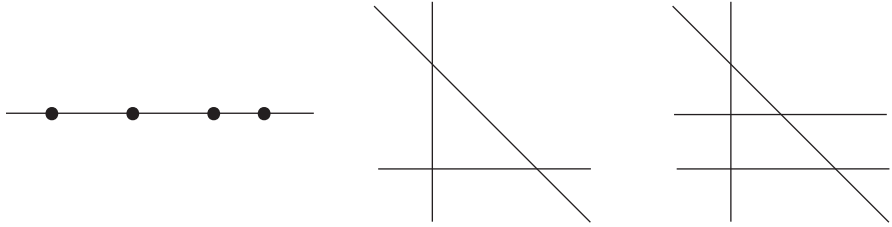}
	\caption{Examples of hyperplane arrangements.  The left corresponds to $\widetilde{\C^2/\Z_{4}}$ resolutions.  The middle corresponds to $T^*\C\bP^2$.  The right corresponds to a hypertoric variety which contains both $T^*\mathbb{F}_1$ and $T^*\C\bP^2$}.
	\label{fig:hyperplane-arrangement}
\end{figure}
	
	\subsection{Geometry and topology of hypertoric varieties}
Let $\mathcal{A}=\{A_i\}_{i=1}^n$ be the collection of affine subspaces $A_i=H_{\R,i}\times H_{\mathbb{C},i}\subset (\mathfrak{t}^d)^*\oplus (\mathfrak{t}^d_{\C})^*$. We have the following necessary and sufficient conditions for $\mathfrak{M}_{u,\lambda}$ to be an orbifold or smooth manifold:
	
	\begin{theorem}[{\cite[Theorem 3.2, 3.3]{BD}}]
		\label{thm:smooth}
		$\mathfrak{M}_{u,\lambda}$ is an orbifold with at worst Abelian quotient singularities if and only if every $d+1$ distinct elements in $\mathcal{A}$ have empty intersection. It is a smooth manifold if and only if, in addition, whenever $d$ distinct elements $A_{i_1},...,A_{i_d}$ have nonempty intersection, the set $\{u_{i_1},\ldots,u_{i_d}\}$ spans $\mathfrak{t}^d_{\Z}$ over $\Z$. 
	\end{theorem}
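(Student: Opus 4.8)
The plan is to recover the Bielawski--Dancer criterion from the general theory of hyper-Kähler reduction combined with a pointwise analysis of the residual $K$-action. Write $\mu=(\mu_{\R},\mu_{\C})$ and $\lambda=(\lambda_{\R},\lambda_{\C})$. The starting point is the standard fact that the image of $d\mu_{(z,w)}$ is the annihilator of the Lie algebra of the $K$-stabilizer of $(z,w)$, so that $\lambda$ is a regular value of $\mu$ if and only if $K$ acts on $\mu^{-1}(\lambda)$ with finite stabilizers; these are automatically Abelian, $K$ being a torus, and in that case $\mathfrak{M}_{u,\lambda}$ is an orbifold with at worst Abelian quotient singularities, which is a smooth manifold precisely when all those stabilizers are trivial. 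So it suffices to prove: (i) $K$ acts on $\mu^{-1}(\lambda)$ with finite stabilizers if and only if no $d+1$ of the $A_i$ have common intersection; and (ii), granting (i), $K$ acts freely if and only if whenever $d$ of the $A_i$ meet the corresponding $u_i$ span $\mathfrak{t}^d_{\Z}$ over $\Z$.

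The key local computation is that $\vec t=(t_1,\ldots,t_n)$ acts on the $i$-th coordinate pair by $(z_i,w_i)\mapsto(t_iz_i,t_i^{-1}w_i)$, so the $T^n$-stabilizer of $(z,w)$ is the coordinate subtorus $T^S$ indexed by $S=S(z,w):=\{i:z_i=w_i=0\}$, whence $\mathrm{Stab}_K(z,w)=K\cap T^S$. Writing $\mathfrak{t}^S=\mathrm{span}_{\R}\{e_i:i\in S\}$ and using $\mathfrak{k}=\ker\pi$ with $\pi(e_i)=u_i$: the group $K\cap T^S$ is finite iff $\mathfrak{k}\cap\mathfrak{t}^S=0$ iff $\{u_i\}_{i\in S}$ is linearly independent; and, granting that, $K\cap T^S$ is isomorphic to the quotient of the saturation of $L:=\Z\langle u_i:i\in S\rangle$ in $\mathfrak{t}^d_{\Z}$ by $L$, hence is trivial iff $L$ is a saturated sublattice, equivalently iff $\{u_i\}_{i\in S}$ extends to an integral basis of $\mathfrak{t}^d_{\Z}$ (which for $|S|=d$ just means spanning). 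It then remains to see which $S$ occur: from the formulas for $\bar\mu_{\R},\bar\mu_{\C}$ one reads off $z_iw_i=\langle\bar\mu_{\C}[z,w],u_i\rangle-\hat\lambda_{\C,i}$ and $\tfrac12(|z_i|^2-|w_i|^2)=\langle\bar\mu_{\R}[z,w],u_i\rangle-\hat\lambda_{\R,i}$, so $z_i=w_i=0$ for all $i\in S$ forces $(\bar\mu_{\R},\bar\mu_{\C})[z,w]\in\bigcap_{i\in S}A_i$; conversely, any point of $\bigcap_{i\in S}A_i$ is the image of some $(z,w)\in\mu^{-1}(\lambda)$ whose vanishing set contains $S$, produced by solving the $n$ decoupled systems $z_iw_i=c_i$, $|z_i|^2-|w_i|^2=2r_i$ on $\C^2$ (always solvable). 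Hence $K$ acts with finite stabilizers iff $\{u_i\}_{i\in S}$ is independent for every $S$ with $\bigcap_{i\in S}A_i\neq\emptyset$, and acts freely iff each such $\{u_i\}_{i\in S}$ moreover extends to an integral basis.

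The last task is to match these with the stated combinatorial conditions. One direction is immediate: if $d+1$ of the $A_i$ meet, the corresponding $d+1$ vectors are dependent in the $d$-dimensional $\mathfrak{t}^d$, so a stabilizer is positive-dimensional; and if $d$ of them meet with their $u_i$ not spanning $\mathfrak{t}^d_{\Z}$, then $K\cap T^S$ is a nontrivial finite group. For the converse I would argue by a propagation step that uses the hypothesis that $u_1,\ldots,u_n$ span $\mathfrak{t}^d$: suppose $S$ is minimal with $\bigcap_{i\in S}A_i\neq\emptyset$ but $\{u_i\}_{i\in S}$ dependent, so $S$ is a circuit and $r:=\mathrm{rank}\{u_i\}_{i\in S}=|S|-1<d$, and $\bigcap_{i\in S}A_i$ is nonempty with each of its real and complex factors an affine subspace of positive dimension $d-r$. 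Since the $u_j$ span $\mathfrak{t}^d$, one can adjoin an index $j$ with $u_j\notin\mathrm{span}_{\R}\{u_i:i\in S\}$; then $\langle\,\cdot\,,u_j\rangle$ restricts to a non-constant affine function on each factor, so it takes the value $\hat\lambda_{\R,j}$, resp.\ $\hat\lambda_{\C,j}$, and $\bigcap_{i\in S\cup\{j\}}A_i$ is again nonempty with dimension dropped by exactly one. Iterating $d-r$ times yields $d+1$ of the $A_i$ with common intersection, a contradiction, proving (i). For (ii) one runs the same enlargement starting instead from a meeting set $S$ with $|S|<d$ and $L=\Z\langle u_i\rangle_{i\in S}$ not saturated — so there is $v\in\mathfrak{t}^d_{\Z}$ with $v\notin L$ but $mv\in L$ for some $m\geq2$ — choosing each new $u_j$ outside the real span of the vectors already selected; then $\Z\langle u_i\rangle_{i\in S'}\cap\mathrm{span}_{\R}\{u_i:i\in S\}=L$, so $v\notin\Z\langle u_i\rangle_{i\in S'}$ while $mv\in\Z\langle u_i\rangle_{i\in S'}$, showing $\{u_i\}_{i\in S'}$ ($|S'|=d$) does not span $\mathfrak{t}^d_{\Z}$ although $\bigcap_{i\in S'}A_i\neq\emptyset$, contradicting the hypothesis of (ii). I expect this propagation argument — upgrading a low-codimension degeneracy to an honest $(d+1)$-fold, or non-unimodular $d$-fold, intersection while keeping the intersection nonempty — to be the only non-routine point; everything else is the reduction dictionary together with unwinding the explicit moment maps.
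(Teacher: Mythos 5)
The paper itself gives no proof of this statement: it is imported verbatim from Bielawski--Dancer \cite{BD} (the paper only proves Proposition \ref{prop:complement} in this section), so the comparison is really with the standard argument of \cite{BD} -- and your proposal is, in substance, a correct reconstruction of it. The stabilizer dictionary is right: $\mathrm{Stab}_K(z,w)=K\cap T^{S}$ with $S=\{i:z_i=w_i=0\}$; $K\cap T^S$ is finite iff $\{u_i\}_{i\in S}$ is independent, and in that case $K\cap T^S\cong \tilde L/L$ where $\tilde L$ is the saturation of $L=\Z\langle u_i\rangle_{i\in S}$, so triviality is exactly saturation (which for $|S|=d$ means spanning $\mathfrak{t}^d_{\Z}$). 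The identification of which $S$ occur is also right, since $z_i=w_i=0$ is equivalent to $z_iw_i=0$ together with $|z_i|=|w_i|$, i.e.\ to the image point lying on $A_i$, and the coordinatewise solvability of $z_iw_i=c_i$, $|z_i|^2-|w_i|^2=2r_i$ gives surjectivity of $(\bar{\mu}_{\R},\bar{\mu}_{\C})$ onto any prescribed intersection. Your rank-propagation step is the genuinely non-routine point and it works: a minimal meeting dependent family is a circuit of size at most $d+1$; if smaller, both factors of $\bigcap_{i\in S}A_i$ are affine subspaces of positive dimension $d-r$, and any $u_j$ outside $\mathrm{span}_{\R}\{u_i\}_{i\in S}$ gives a non-constant affine functional on each factor, so the family can be enlarged keeping the intersection nonempty until one reaches $d+1$ meeting elements; the saturation variant is also correct because the adjoined vectors are independent modulo $\mathrm{span}_{\R}\{u_i\}_{i\in S}$, which is exactly what the identity $\Z\langle u_i\rangle_{i\in S'}\cap\mathrm{span}_{\R}\{u_i\}_{i\in S}=L$ requires.

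The one place where you assert rather than argue is the opening reduction, specifically the ``only if'' halves of ``orbifold $\iff$ locally free on $\mu^{-1}(\lambda)$'' and ``smooth $\iff$ free''. The ``if'' halves are standard (hyper-K\"ahler moment maps are submersions where the action is locally free, and finite subgroups of the torus $K$ are abelian), but since the theorem is a biconditional you also need: (a) a nontrivial finite stabilizer genuinely produces a singular point of the quotient -- this follows from the local (hyper-K\"ahler slice) model, because a finite-order element of the complex symplectic group has determinant $1$ and is semisimple, hence is never a pseudo-reflection, so Chevalley--Shephard--Todd rules out a smooth quotient; and (b) a positive-dimensional stabilizer precludes an orbifold structure at all, which in \cite{BD} is handled via the local normal form rather than merely observing that the level set is not cut out transversally. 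These are standard inputs, but as written there is a gap between ``$\lambda$ fails to be a regular value'' and ``$\mathfrak{M}_{u,\lambda}$ fails to be an orbifold/manifold''; citing or sketching the local model closes it, and with that your proof is complete and essentially the same as the original.
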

	
	\begin{corollary}
		For $\lambda_{\C}=0$, $\mathfrak{M}_{u,\lambda}$ is a smooth manifold if and only if $\mathcal{H}_{\R}$ is both simple and unimodular.
	\end{corollary}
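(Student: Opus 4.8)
The plan is to deduce this corollary directly from Theorem \ref{thm:smooth} by showing that, when $\lambda_{\C}=0$, the two conditions in that theorem (the orbifold condition plus the unimodularity-on-intersecting-subsets condition) are equivalent to simplicity and unimodularity of $\mathcal{H}_{\R}$ alone. The key observation is that with $\lambda_{\C}=0$ all the complex hyperplanes $H_{\C,i}$ pass through the origin of $(\mathfrak{t}^d_{\C})^*$ (after choosing the lift $\hat\lambda_{\C}=0$), so any subcollection $\{H_{\C,i_1},\dots,H_{\C,i_k}\}$ has nonempty intersection, and that intersection has codimension equal to the rank of $\{u_{i_1},\dots,u_{i_k}\}$.

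First I would translate the condition ``every $d+1$ distinct elements of $\mathcal{A}$ have empty intersection'' into a statement purely about $\mathcal{H}_{\R}$. Since $A_i=H_{\R,i}\times H_{\C,i}$ and, for $\lambda_{\C}=0$, the factors $H_{\C,i}$ always share the origin, the intersection $\bigcap_{j=1}^{d+1}A_{i_j}$ is empty if and only if $\bigcap_{j=1}^{d+1}H_{\R,i_j}=\varnothing$. I would then argue this holds for all $(d+1)$-subsets precisely when $\mathcal{H}_{\R}$ is simple: if $d+1$ real hyperplanes in $(\mathfrak{t}^d)^*$ (a $d$-dimensional space) met, simplicity would force their intersection to have codimension $d+1$, impossible; conversely, a failure of simplicity produces $k$ hyperplanes meeting in codimension less than $k$, and one can enlarge to a $(d+1)$-subset with nonempty intersection, using that $\{u_i\}$ spans. (One should be slightly careful here: simplicity is usually stated for the whole arrangement and one must check the enlargement argument goes through; this is the one place where a small combinatorial lemma about hyperplane arrangements in general position is needed.)

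Next I would handle the additional smoothness condition. When $d$ affine subspaces $A_{i_1},\dots,A_{i_d}$ meet, their real parts $H_{\R,i_1},\dots,H_{\R,i_d}$ meet (again the complex parts automatically meet at $0$), and by the already-established simplicity this intersection is a point, which forces $\{u_{i_1},\dots,u_{i_d}\}$ to be linearly independent. So Theorem \ref{thm:smooth}'s requirement that $\{u_{i_1},\dots,u_{i_d}\}$ span $\mathfrak{t}^d_{\Z}$ over $\Z$ is exactly the unimodularity hypothesis applied to this linearly independent $d$-subset. Conversely, if $\mathcal{H}_{\R}$ is simple and unimodular, then whenever $d$ of the $A_i$ meet, the corresponding $u$'s are independent and hence span $\mathfrak{t}^d_{\Z}$, giving smoothness. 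Assembling these equivalences yields the corollary.

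The main obstacle, such as it is, is the bookkeeping in the first step: making precise that ``$\mathcal{H}_{\R}$ simple'' (a condition phrased over all $k\le$ the ambient dimension) is equivalent to the Theorem's ``no $d+1$ of the $A_i$ intersect'' condition, and in particular verifying that a violation of simplicity at level $k<d$ can always be promoted to a violation at level $d+1$. This uses that the $u_i$ span $\mathfrak{t}^d$, so any proper linear flat cut out by some of the $H_{\R,i}$ can be further cut down by adding more hyperplanes from the arrangement. Once this is in hand the rest is immediate from Theorem \ref{thm:smooth}. I expect the whole argument to be short.
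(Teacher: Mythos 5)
Your proposal is correct and takes essentially the same route as the paper, which states this corollary without proof as an immediate specialization of Theorem \ref{thm:smooth} to $\lambda_{\C}=0$: choosing the lift $\hat{\lambda}_{\C}=0$ makes all the $H_{\C,i}$ pass through the origin, so nonemptiness of intersections of the $A_i$ is governed by $\mathcal{H}_{\R}$ alone, and your enlargement argument (promoting a failure of simplicity to $d+1$ hyperplanes with a common point, using that the $u_i$ span $\mathfrak{t}^d$) is exactly the small combinatorial lemma needed to identify the orbifold condition with simplicity. The only point you leave implicit, in the direction smooth $\Rightarrow$ unimodular, is that any $d$ hyperplanes with linearly independent normals automatically have a (unique) common point, so every linearly independent $d$-subset of the $u_i$ is in fact covered by the spanning condition of Theorem \ref{thm:smooth}; this is one line of linear algebra and does not affect correctness.
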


	\begin{remark}
		The expression for the SYZ mirror in Theorem \ref{thm:SYZmir} still makes sense even when the hyperplane arrangements are not simple nor unimodular.  We speculate that it is useful for the study of hypertoric degenerations.
	\end{remark}  

	  	For a generic choice of $\lambda_{\C}$, $\mathfrak{M}_{u,\lambda}$ is simply an affine variety.
	  	
	\begin{theorem}[{\cite[Theorem 5.1]{BD}}]
		\label{thm:cplxstr}
		Let $\mathfrak{M}_{u,\lambda}$ be a hypertoric orbifold, and suppose $\mathcal{H}_{\C}$ is simple. Then, $\mathfrak{M}_{u,\lambda}$ equipped with the complex structure inherited from $T^*\C^n$ is biholomorphic to affine variety $\mathrm{Spec}\left(\C[W]^{K_{\C}}\right)$, where $W\subset T^*\C^n\times\C^d$ is defined by the equations
		\[
		z_iw_i=\left<v,u_i\right>-\hat{\lambda}_{\C,i}, \quad i=1,\ldots,n,
		\]
		and $K_{\C}$ acts on $T^*\C^n\times\C^d$ by $\vec{t}\cdot(z,w,v)=(t_1 z_1,t_1^{-1}w_1,\ldots,t_nz_n,t_n^{-1}w_n,v)$.
	\end{theorem}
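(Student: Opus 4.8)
The plan is to reduce the statement to the claim that, when $\mathcal{H}_{\C}$ is simple, the GIT quotient $\mathfrak{M}_{u,\lambda}=\mu_{\C}^{-1}(\lambda_{\C})\twobar_{\lambda_{\R}}K_{\C}$ is \emph{already} the affine quotient $\Spec\bigl(\C[\mu_{\C}^{-1}(\lambda_{\C})]^{K_{\C}}\bigr)$, and then to identify the latter with $\Spec(\C[W]^{K_{\C}})$ by eliminating the variable $v$. For the elimination: since $\{u_i\}$ spans $\mathfrak{t}^d$, the map $v\mapsto(\langle v,u_i\rangle)_{i}$ is injective with a $\C$-linear left inverse on its image, so for each $(z,w)\in\mu_{\C}^{-1}(\lambda_{\C})$ there is a unique $v\in(\mathfrak{t}^d_{\C})^*$ solving $z_iw_i=\langle v,u_i\rangle-\hat{\lambda}_{\C,i}$ for all $i$ (solvability comes from the constraint $\mu_{\C}(z,w)=\lambda_{\C}$ together with $\ker\iota^*_{\C}=\Image\pi^*_{\C}$ and the defining property of the lift $\hat{\lambda}_{\C}$), and $v$ depends polynomially on $(z,w)$. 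Thus $(z,w)\mapsto(z,w,v)$ is a $K_{\C}$-equivariant isomorphism $\mu_{\C}^{-1}(\lambda_{\C})\isomto W$ with $K_{\C}$ acting trivially on the $\C^d$-factor, so $\C[W]^{K_{\C}}\cong\C[\mu_{\C}^{-1}(\lambda_{\C})]^{K_{\C}}$. (Finite generation of this ring is Hilbert--Nagata, and a dimension count over the base of the projection $W\to\C^d_v$ shows $W$ is a reduced irreducible $(n+d)$-dimensional complete intersection with dense smooth locus, so $\Spec(\C[W]^{K_{\C}})$ is a genuine affine variety.)

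The heart of the argument is to show that every point of $\mu_{\C}^{-1}(\lambda_{\C})$ is $\lambda_{\R}$-semistable; granting this, $\mu_{\C}^{-1}(\lambda_{\C})^{\mathrm{ss}}=\mu_{\C}^{-1}(\lambda_{\C})$ is affine, so the GIT quotient $\mathfrak{M}_{u,\lambda}$ --- being the good quotient of the semistable locus --- is literally $\Spec\bigl(\C[\mu_{\C}^{-1}(\lambda_{\C})]^{K_{\C}}\bigr)$, and we are done. Fix $(z,w)\in\mu_{\C}^{-1}(\lambda_{\C})$, let $v$ be the point produced above, and set $I=\{i:z_iw_i=0\}$. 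Then $z_iw_i=0$ forces $v\in H_{\C,i}$ for every $i\in I$, so $\bigcap_{i\in I}H_{\C,i}\neq\emptyset$; simplicity of $\mathcal{H}_{\C}$ then forces this intersection to have codimension $|I|$, i.e.\ $\{u_i\}_{i\in I}$ is linearly independent in $\mathfrak{t}^d$. Now suppose $\sigma\in\Hom(\C^{\times},K_{\C})=\mathfrak{k}_{\Z}$ is a one-parameter subgroup with $\lim_{s\to0}\sigma(s)\cdot(z,w)$ existing in $T^*\C^n$, and write $\iota(\sigma)=\sum_i a_i e_i\in\mathfrak{t}^n$. The coordinate $z_i$ (resp.\ $w_i$) of $T^*\C^n$ has $\sigma$-weight $a_i$ (resp.\ $-a_i$), so existence of the limit forces $a_i\geq 0$ whenever $z_i\neq 0$ and $a_i\leq 0$ whenever $w_i\neq 0$; in particular $a_i=0$ for every $i\notin I$. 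Since $\iota(\sigma)\in\mathfrak{k}=\ker\pi$ we have $\sum_i a_iu_i=0$, hence $\sum_{i\in I}a_iu_i=0$, and linear independence gives $a_i=0$ for $i\in I$ too; thus $\iota(\sigma)=0$ and $\sigma=0$. So the only one-parameter subgroup with a limit at $(z,w)$ is trivial, and by the Hilbert--Mumford criterion $(z,w)$ is semistable with respect to any character, in particular $\lambda_{\R}$. (As $\mu_{\C}$ is $K_{\C}$-invariant, any such limit automatically stays inside the closed $K_{\C}$-stable subvariety $\mu_{\C}^{-1}(\lambda_{\C})\subset T^*\C^n$, so computing with ambient coordinates is harmless.)

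Combining the two steps, $\mathfrak{M}_{u,\lambda}\cong\Spec\bigl(\C[\mu_{\C}^{-1}(\lambda_{\C})]^{K_{\C}}\bigr)\cong\Spec(\C[W]^{K_{\C}})$ with $K_{\C}$ acting on $T^*\C^n\times\C^d$ as asserted; that this biholomorphism respects the complex structure inherited from $T^*\C^n$ is exactly the Kempf--Ness identification of the hyper-K\"ahler (symplectic) quotient with the GIT quotient, which already underlies the description $\mathfrak{M}_{u,\lambda}=\mu_{\C}^{-1}(\lambda_{\C})\twobar_{\lambda_{\R}}K_{\C}$ used above, while the orbifold hypothesis is needed only to ensure $\lambda_{\R}$ is a regular value so that this quotient is the expected object. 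I expect the genuinely delicate point to be the semistability step: the precise form of the Hilbert--Mumford criterion for a non-linear affine $K_{\C}$-variety, keeping straight the sign conventions in $\iota^*$, $\pi^*$ and in viewing $\lambda_{\R}$ as a character, and above all the chain ``$\mathcal{H}_{\C}$ simple'' $\Rightarrow$ ``$\{u_i\}_{i\in I}$ linearly independent'' $\Rightarrow$ ``no destabilizing one-parameter subgroup''; the two reduction steps should be routine.
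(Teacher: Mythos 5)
The paper itself offers no proof of this statement---it is quoted from \cite[Theorem 5.1]{BD}---so there is no in-paper argument to compare against; judged on its own, your proof is correct and amounts to the standard argument one would give. The two pillars both check out. First, the elimination of $v$: on $\mu_{\C}^{-1}(\lambda_{\C})$ the vector $\sum_i(z_iw_i+\hat{\lambda}_{\C,i})\check{e}_i$ lies in $\Kernel(\iota^*_{\C})=\Image(\pi^*_{\C})$ and $\pi^*_{\C}$ is injective, so $v$ is a unique affine-linear function of the invariants $z_iw_i$; hence the graph map $\mu_{\C}^{-1}(\lambda_{\C})\isomto W$ is a $K_{\C}$-equivariant isomorphism (equivariance because $v$ depends only on the $z_iw_i$), giving $\C[W]^{K_{\C}}\cong\C[\mu_{\C}^{-1}(\lambda_{\C})]^{K_{\C}}$. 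Second, the semistability step: simplicity of $\mathcal{H}_{\C}$ does force $\{u_i\}_{i\in I}$, $I=\{i:z_iw_i=0\}$, to be linearly independent, and your weight computation then shows no nontrivial one-parameter subgroup of $K_{\C}$ has a limit at any point of the (closed, invariant) level set, so by King's criterion every point is semistable for every character; since the $\lambda_{\R}$-GIT quotient is the good quotient of the semistable locus and good quotients of an affine variety are unique, it coincides with $\Spec\bigl(\C[\mu_{\C}^{-1}(\lambda_{\C})]^{K_{\C}}\bigr)$. The one ingredient you take as given---that the hyper-K\"ahler quotient with the complex structure inherited from $T^*\C^n$ is biholomorphic to the character-GIT quotient $\mu_{\C}^{-1}(\lambda_{\C})\twobar_{\lambda_{\R}}K_{\C}$---is exactly the alternative description the paper states without proof in Section \ref{sec:hypertoric}, so invoking it is consistent with the paper's framework, though in the cited reference it is part of what is established. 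One cosmetic caution: with the paper's displayed formulas the lift actually satisfies $\iota^*_{\C}\hat{\lambda}_{\C}=-\lambda_{\C}$ rather than $+\lambda_{\C}$; your argument is insensitive to this sign convention, as you anticipated.
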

	
		In general it is difficult to write down an explicit hyper-Kähler metric.  For a hypertoric variety, the K\"ahler metric is descended from the standard metric on $T^*\C^n$ and has a simple expression.
	
	\begin{theorem}[{\cite[Theorem 8.3]{BD}}]
		Let $s_i=|z_i|^2-|w_i|^2$, $v_i=z_iw_i$, and $r_i=\sqrt{s_i^2+4v_i\bar{v}_i}$. Then, on the open dense subset of $\mathfrak{M}_{u,\lambda}$ where the $T^d$-action is free, the induced Kähler form $\omega$ is given by
		\begin{equation}
		\label{KP}
		\omega=\frac{1}{4}dd^c(2\bar{\mu}_{\R},\bar{\mu}_{\C})^*\left(\sum_{i=1}^n(r_i+2\hat{\lambda}_{\R,i}\ln(s_i+r_i))\right),
		\end{equation}
		where $d^c=\sqrt{-1}(\bar{\partial}-\partial)$.
	\end{theorem}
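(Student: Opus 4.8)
The plan is to obtain the formula from the general description of the Kähler potential of a Kähler quotient, applied to the GIT model $\mathfrak{M}_{u,\lambda}=\mu_{\C}^{-1}(\lambda_{\C})\twobar_{\lambda_{\R}}K_{\C}$. First I would record the elementary input: on $T^*\C^n$ the standard Kähler potential is $\Phi=\sum_{i=1}^n(|z_i|^2+|w_i|^2)$, so $\omega_{\R}=\tfrac14 dd^c\Phi$, and since
\[
(|z_i|^2-|w_i|^2)^2+4|z_iw_i|^2=(|z_i|^2+|w_i|^2)^2,
\]
one has $\Phi=\sum_i r_i$. Thus when $K$ is trivial the asserted identity is literally $\omega_{\R}=\tfrac14 dd^c\sum_i r_i$, and the content of the theorem is to track how $\Phi$ is modified by the Kähler reduction by $K_{\C}$ at the real level $\lambda_{\R}$, which should create the $\ln(s_i+r_i)$ corrections.

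Second, I would set up the reduction. Over the open dense locus where $T^d$ acts freely, the $K_{\C}$-action on the stable part $\bigl(\mu_{\C}^{-1}(\lambda_{\C})\bigr)^{s}$ is free, $\mu_{\C}^{-1}(\lambda_{\C})$ is a $K_{\C}$-invariant complex submanifold on which $\omega_{\R}=\tfrac14 dd^c(\Phi|)$ and on which $\mu_{\R}$ restricts to a moment map for $K$, and by Kempf--Ness each $K_{\C}$-orbit meets $\mu_{\R}^{-1}(\lambda_{\R})$ in a single $K$-orbit. The general principle I would invoke (and then prove) is: for $(z,w)\in\mu_{\C}^{-1}(\lambda_{\C})$ the function
\[
\xi\longmapsto\Phi\bigl(\exp(\sqrt{-1}\,\xi)\cdot(z,w)\bigr)-c\,\langle\lambda_{\R},\xi\rangle,\qquad \xi\in\mathfrak{k},
\]
with $c$ a suitable constant and $\exp(\sqrt{-1}\,\xi)$ the complexified-orbit direction, is strictly convex and, on the stable locus, coercive; its unique critical point $\xi_{\ast}$ is characterized by $\exp(\sqrt{-1}\,\xi_{\ast})\cdot(z,w)\in\mu_{\R}^{-1}(\lambda_{\R})$, since the $\xi$-derivative of $\Phi$ along a complexified orbit computes the $\mathfrak{k}$-moment map; and the critical value, transported to $\mathfrak{M}_{u,\lambda}$ by a local holomorphic section of $\bigl(\mu_{\C}^{-1}(\lambda_{\C})\bigr)^{s}\to\mathfrak{M}_{u,\lambda}$, is a Kähler potential $\bar\Phi$ for the reduced form, so that $\omega=\tfrac14 dd^c\bar\Phi$. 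I would prove this in the standard way: restrict $\Phi$ to the Kempf--Ness slice, compare the (non-holomorphic) Kempf--Ness section with a holomorphic section, and observe that changing the holomorphic section alters $\bar\Phi$ by the real part of a holomorphic function, so that $\tfrac14 dd^c\bar\Phi$ is well defined; the one non-formal ingredient is the positivity of $\partial\bar\partial\Phi$ along complexified orbits, which supplies both the convexity and the Kempf--Ness uniqueness.

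Third, I would carry out the extremization explicitly. Along the complexified orbit $(z_i,w_i)\mapsto(e^{\xi_i}z_i,e^{-\xi_i}w_i)$ (up to the sign fixed by the convention in Section \ref{review}), the product $z_iw_i=v_i$ is invariant and $\Phi=\sum_i\sqrt{\sigma_i(\xi)^2+4|v_i|^2}$ where $\sigma_i(\xi)=e^{2\xi_i}|z_i|^2-e^{-2\xi_i}|w_i|^2$; the critical condition reads $\tfrac12\sum_i\sigma_i(\xi_{\ast})\,\iota^*\check e_i=\lambda_{\R}$, i.e. $\sigma_i(\xi_{\ast})=s_i$ is the value of $|z_i|^2-|w_i|^2$ at the Kempf--Ness point. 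Under $\Kernel(\iota^*)\cong(\mathfrak{t}^d)^*$ this $s_i$ is the affine function $\langle 2\bar\mu_{\R},u_i\rangle-\hat\lambda_{\R,i}$ cutting out $H_{\R,i}$, while $v_i$ is $\langle\bar\mu_{\C},u_i\rangle-\hat\lambda_{\C,i}$, and $\Phi$ at the Kempf--Ness point is $\sum_i r_i$ with $r_i=\sqrt{s_i^2+4|v_i|^2}$. From $e^{2\xi_{\ast,i}}|z_i|^2\mp e^{-2\xi_{\ast,i}}|w_i|^2\in\{s_i,r_i\}$ one solves $e^{2\xi_{\ast,i}}|z_i|^2=\tfrac12(s_i+r_i)$, hence $\langle\lambda_{\R},\xi_{\ast}\rangle=\tfrac12\sum_i\hat\lambda_{\R,i}\ln\tfrac{s_i+r_i}{2|z_i|^2}$ up to sign; plugging in, and discarding the constant and the term $\sum_i\hat\lambda_{\R,i}\ln|z_i|^2$ (which is locally the real part of a holomorphic function, hence annihilated by $dd^c$), gives
\[
\bar\Phi\;\equiv\;\sum_{i=1}^n\bigl(r_i+2\hat\lambda_{\R,i}\ln(s_i+r_i)\bigr)
\]
modulo $dd^c$-exact contributions, with the sign of the logarithmic term dictated by the conventions fixed in Section \ref{review}. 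Since $s_i,v_i,r_i$ depend on the point of $\mathfrak{M}_{u,\lambda}$ only through $\bar\mu_{\R}$ and $\bar\mu_{\C}$, this displays $\bar\Phi$ as $(2\bar\mu_{\R},\bar\mu_{\C})^*$ of $\sum_i(r_i+2\hat\lambda_{\R,i}\ln(s_i+r_i))$, which is the asserted formula.

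The main obstacle is making the Kähler-quotient-potential principle rigorous in the present, generically singular, GIT setting and keeping the constants and signs straight: one must check that reducing in two stages — first imposing the complex condition $\mu_{\C}=\lambda_{\C}$, then Kähler-reducing by $K_{\C}$ — is legitimate (it is, because $\mu_{\C}^{-1}(\lambda_{\C})$ is $K_{\C}$-invariant, $\omega_{\R}$ restricts to $\tfrac14 dd^c(\Phi|)$, and $\mu_{\R}$ restricts to a moment map), that the critical value genuinely represents the reduced form (the non-formal step being the comparison of the Kempf--Ness section with a holomorphic one), and that the equality of $(1,1)$-forms persists across the divisors where some $z_i$ or $w_i$ vanishes at the Kempf--Ness point, where $\bar\Phi$ acquires logarithmic singularities but $\omega$ remains smooth — here one concludes by continuity from the open dense locus where all the relevant coordinates are nonzero. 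The convexity, the identification of $s_i,v_i$ with moment-map coordinates, and the identity $\sqrt{s_i^2+4|v_i|^2}=|z_i|^2+|w_i|^2$ at the Kempf--Ness point are then routine.
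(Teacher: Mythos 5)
This theorem is quoted from \cite[Theorem 8.3]{BD}; the paper gives no proof of it, and the remark following the Lemma for (\ref{kp1}) indicates that the derivation in \cite{BD} goes by Legendre duality: $F=\frac14\sum_i(r_i-s_i\ln|s_i+r_i|)$ is the Legendre transform of the flat potential on $T^*\C^n$, and (\ref{KP}) is recovered as the Legendre transform of $F$ restricted to the affine subspace $(\mathfrak{t}^d)^*\oplus(\mathfrak{t}^d_{\C})^*$. Your route is different in form but not in spirit: you run a Kempf--Ness critical-value argument for the reduction of $\mu_{\C}^{-1}(\lambda_{\C})$ by $K_{\C}$ twisted by the character $\lambda_{\R}$, which is precisely the Burns--Guillemin mechanism \cite[Theorem 7]{BG} that this paper itself invokes to prove the Lemma giving (\ref{kp1}) for the reduced spaces $X_s$ --- you apply it to the reduction by $K$ where the paper applies it to the reduction by $T^n$. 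The skeleton is sound: the two-stage reduction through the $K_{\C}$-invariant complex submanifold $\mu_{\C}^{-1}(\lambda_{\C})$ is legitimate, the extremization is done correctly ($e^{2\xi_{*,i}}|z_i|^2=\tfrac12(s_i+r_i)$, $\Phi$ at the Kempf--Ness point equals $\sum_i r_i$, and the $\ln|z_i|^2$ terms are pluriharmonic along a local holomorphic section, hence killed by $dd^c$), and the identification of $s_i,v_i$ with affine functions of $(2\bar{\mu}_{\R},\bar{\mu}_{\C})$ is what turns the critical value into a moment-map pullback. What the Legendre route of \cite{BD} buys is that it bypasses the Kempf--Ness convexity/coercivity discussion entirely; what your route buys is uniformity with the paper's own treatment of the reduced K\"ahler forms and of the Moser isotopies built on them. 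The one soft spot is that you leave the constant $c$ and the sign of the logarithmic term to ``conventions'': since that term is the entire nontrivial content of (\ref{KP}), you should pin it down --- differentiating $\tfrac14\Phi(\exp(\sqrt{-1}\,\xi)\cdot(z,w))$ along $\mathfrak{k}$ fixes $c$ uniquely, and one must also track that in this paper the lift enters $\bar{\mu}_{\R}$ with a plus sign (so $\iota^*\hat{\lambda}_{\R}$ equals $-2\lambda_{\R}$, not $\lambda_{\R}$) and that a factor of $2$ is absorbed into the map $(2\bar{\mu}_{\R},\bar{\mu}_{\C})$; only after this bookkeeping does the critical value land on the stated coefficient $+2\hat{\lambda}_{\R,i}\ln(s_i+r_i)$ rather than some sign- or factor-shifted variant, exactly as the analogous bookkeeping in the proof of (\ref{kp1}) produces the $-s_i\ln|s_i\pm r_i|$ term there.
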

	
	\subsection{Circuits and primitive curve classes}
	\label{sec:circuits}
	
		The SYZ mirrors that we are going to construct depend on K\"ahler parameters, which are recording the symplectic areas of primitive curve classes in $H_2(\mathfrak{M}_{u,\lambda};\Z)$. The following definition is crucial to understand primitive curve classes in hypertoric varieties.
	\begin{definition}[{\cite{MS}}]
	A \textit{circuit} $S\subset \{1,\ldots,n\}$ in $\mathcal{H}_{\R}$ is a minimal subset satisfying
	$$\bigcap_{i\in S} H_{\R,i}=\emptyset.$$
\end{definition}

A circuit $S$ admits a unique splitting $S=S^+\coprod S^-$ (up to swapping $S^+$ and $S^-$), which is characterized by the equation	
\[
\sum_{i\in S^+} u_i-\sum_{i\in S^-	} u_i=0 \in \mathfrak{t}^d.
\]
For each circuit $S$, we fix the splitting $S=S^+\coprod S^-$ such that if we set
\[
\beta_{S}=\sum_{i\in S^+} e_i - \sum_{i\in S^-} e_i,
\]
then $\hat{\lambda}_{\R}(\beta_{S})>0$
for any lift $\hat{\lambda}_{\R}\in (\mathfrak{t}^n)^*$ of $\lambda_{\R}$.  

$\beta_{S}$ is a primitive class in	$\mathfrak{k}_{\Z}=H_2(\mathfrak{M}_{u,\lambda};\Z)$.  It can be understood as a curve class obtained from gluing holomorphic discs emanated from the hyperplanes indexed by $S$.  We denote by $q^{\beta_{S}}$ the Kähler parameter associated to $\beta_{S}$.
	
	\subsection{Cotangent bundles of toric varieties in a hypertoric variety}
    Let $\mathcal{H}_{\R}$ be the real hyperplane arrangement of $\mathfrak{M}_{u,\lambda}$. Let $\Delta$ be a convex polytope in $(\mathfrak{t}^d)^*$ with its interior being a chamber in the complement of $\mathcal{H}_{\R}$. We will assume $\Delta$ is simple, which is the case when $\mathcal{H}_{\R}$ is simple. We further assume $\lambda_{\C}=0$, so that $\mathfrak{M}_{u,\lambda}$ is equipped with its canonical complex structure. Then, the cotangent bundle $T^*X_{\Delta}$ of the toric variety $X_{\Delta}$ naturally embeds into $\mathfrak{M}_{u,\lambda}$ as an open dense subset (Fig. \ref{fig:cotang-in-hypertoric}).
	
	\begin{theorem}[{\cite[Theorem 7.1]{BD}}]
		\label{thm:cotangent}
		$T^*X_{\Delta}$ with its canonical holomorphic-symplectic structure is $T^d$-equivariantly isomorphic to an open dense subset $U_{\Delta}$ of $\mathfrak{M}_{u,\lambda}$. The hyper-Kähler metric of $\mathfrak{M}_{u,\lambda}$ restricted to the zero section of $T^*X_{\Delta}$ is the Kähler metric on $X_{\Delta}$ determined by $\Delta$.
	\end{theorem}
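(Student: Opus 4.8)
The plan is to realise $T^*X_\Delta$ as the part of $\mathfrak M_{u,\lambda}$ obtained by hyper-Kähler reduction of $T^*\C^n$ over the semistable locus of $\C^n$, using the standard fact that hyper-Kähler reduction of a cotangent bundle produces the cotangent bundle of the corresponding Kähler reduction. A preliminary normalization: after replacing the defining data $\{u_i,\hat\lambda_{\R,i}\}$ by $\{\pm u_i,\pm\hat\lambda_{\R,i}\}$, with the signs prescribed by the sides of $\Delta$ --- an operation realised on $T^*\C^n$ by the hyper-Kähler automorphism interchanging $(z_i,w_i)\mapsto(w_i,-z_i)$ in the flipped coordinates, which is a holomorphic symplectomorphism of $(T^*\C^n,\omega_\C)$, an isometry of the flat metric, and preserves $\mu_\C^{-1}(0)$ --- we may assume $\Delta=\{s:\langle s,u_i\rangle\ge\hat\lambda_{\R,i}\ \forall i\}$. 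Then the Delzant construction identifies $X_\Delta$ with the Kähler/GIT quotient $\C^n\twobar_{\lambda_\R}K_\C$, whose semistable locus $(\C^n)^{ss}$ carries a free $K_\C$-action when $\mathcal H_\R$ is simple and unimodular (the orbifold case, with finite stabilizers, is identical), while $\mathfrak M_{u,\lambda}=\mu_\C^{-1}(0)\twobar_{\lambda_\R}K_\C$ because $\lambda_\C=0$.

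The structural input --- visible in $\mu_\C(z,w)=\sum_i z_iw_i\,\iota^*_\C\check e_i$ --- is that $\mu_\C^{-1}(0)\subset T^*\C^n$ is exactly the union of the conormal spaces to the $K_\C$-orbits in $\C^n$: the vanishing of $\mu_\C(z,w)$ says $(z_1w_1,\dots,z_nw_n)\in\Kernel(\iota^*_\C)=\Image(\pi^*_\C)$, which is precisely the condition that the covector $w$ annihilate $T_z(K_\C\cdot z)$. Writing $p\colon T^*\C^n\to\C^n$ for the projection, $\mu_\C^{-1}(0)\cap p^{-1}\bigl((\C^n)^{ss}\bigr)$ is therefore the total space of the conormal bundle of the orbit foliation on $(\C^n)^{ss}$, and the standard ``reduction commutes with cotangent bundle'' statement (Marsden--Weinstein reduction of $T^*\C^n$) identifies its free $K_\C$-quotient with $T^*\bigl((\C^n)^{ss}/K_\C\bigr)=T^*X_\Delta$, carrying $\omega_\C=\sum dz_i\wedge dw_i$ to the canonical holomorphic symplectic form. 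This gives a holomorphic, injective map $\Phi\colon T^*X_\Delta\to\mathfrak M_{u,\lambda}$, the map on $K_\C$-quotients induced by the open inclusion $\mu_\C^{-1}(0)\cap p^{-1}\bigl((\C^n)^{ss}\bigr)\hookrightarrow(\mu_\C^{-1}(0))^{ss}$; as $K_\C$ acts freely on $(\mu_\C^{-1}(0))^{ss}$ (smoothness of $\mathfrak M_{u,\lambda}$) the quotient is geometric, so this inclusion descends to an open embedding, and we set $U_\Delta:=\Phi(T^*X_\Delta)$. Density of $U_\Delta$ is then automatic, since $\mathfrak M_{u,\lambda}$ is irreducible --- it is smooth and connected, $\mu_\C^{-1}(0)$ being contractible via $(z,w)\mapsto(sz,w)$, $s\to0$.

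For $T^d$-equivariance, observe that the residual action $\vec{t}\cdot(z,w)=(t_1z_1,t_1^{-1}w_1,\dots)$ on $T^*\C^n$ is exactly the cotangent lift of the standard $T^n$-action on $\C^n$ (under $z_i\mapsto t_iz_i$ a covector $\sum w_i\,dz_i$ transforms to $\sum t_i^{-1}w_i\,dz_i$), so on $U_\Delta$ the induced $T^d=T^n/K$-action is the cotangent lift of the residual toric $T^d$-action on $X_\Delta=(\C^n)^{ss}/K_\C$, i.e. the canonical $T^d$-action on $T^*X_\Delta$; the same naturality makes $\Phi$ intertwine the holomorphic symplectic structures. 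It remains to identify the metric on the zero section. Under $\Phi$ the zero section of $T^*X_\Delta$ is the image of $\{w=0\}\cap\mu_\C^{-1}(0)\cap p^{-1}\bigl((\C^n)^{ss}\bigr)=\{(z,0):z\in(\C^n)^{ss}\}$, and $\mu_\R(z,0)=\tfrac12\sum_i|z_i|^2\iota^*\check e_i=:\nu(z)$ is the standard moment map for $K$ acting on $\C^n$; hence the zero section inside $\mathfrak M_{u,\lambda}$ is $\nu^{-1}(\lambda_\R)/K$, which by Delzant is $X_\Delta$ with the reduced Kähler form $\omega_\Delta$ --- the Kähler metric determined by $\Delta$ --- because $\omega_\R$ restricts on $\{w=0\}\cong\C^n$ to the flat form $\tfrac{\sqrt{-1}}{2}\sum dz_i\wedge d\bar z_i$ with moment map $\nu$. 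Concretely, restricting the defining relation $\varrho^*\bar\omega_\R=\omega_\R|_{\mu_\R^{-1}(\lambda_\R)\cap\mu_\C^{-1}(0)}$ (with $\varrho$ the Kähler quotient projection and $\bar\omega_\R$ the Kähler form of $\mathfrak M_{u,\lambda}$) to $\{w=0\}$ gives $\varrho_0^*\bigl(\bar\omega_\R|_{\text{zero section}}\bigr)=\omega_\R|_{\{w=0\}}\big|_{\nu^{-1}(\lambda_\R)}=\varrho_0^*\omega_\Delta$, so $\bar\omega_\R|_{X_\Delta}=\omega_\Delta$; as the zero section is a complex submanifold of $\mathfrak M_{u,\lambda}$ and the ambient metric is Kähler, equal Kähler forms force equal Kähler metrics.

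The step I expect to need the most care is pinning down which subvariety of $\mathfrak M_{u,\lambda}$ is the zero section, since this depends on the chamber through the choice of sides --- hence the preliminary hyper-Kähler relabeling of $T^*\C^n$. Given that normalization the only conceptual ingredient is the ``reduction commutes with cotangent bundle'' principle, and the rest is verifying that the inherited complex symplectic form, $T^d$-action and zero-section Kähler metric are the canonical ones. One could instead dispense with the abstract reduction and argue in the Bielawski--Dancer atlas: by simplicity each vertex of $\Delta$ lies on exactly $d$ hyperplanes $H_{\R,i_1},\dots,H_{\R,i_d}$, by unimodularity $\{u_{i_1},\dots,u_{i_d}\}$ is a $\Z$-basis of $\mathfrak t^d_\Z$, and the corresponding point of $\mathfrak M_{u,\lambda}$ carries a chart $\cong T^*\C^d$ equal to the standard affine chart of $T^*X_\Delta$ at that vertex; these glue over the vertices of $\Delta$ to $U_\Delta$.
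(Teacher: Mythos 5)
The paper does not actually prove this statement: it is quoted from Bielawski--Dancer \cite[Theorem 7.1]{BD}, and the text only recalls their construction of the embedded copy of $T^*X_\Delta$ as the hyper-K\"ahler quotient restricted to the stratified set $Y_\Delta$. Your argument is, in substance, the standard proof behind that citation and is compatible with the recalled construction: after the sign-flip normalization (which is indeed a hyper-K\"ahler automorphism of $T^*\C^n$ intertwining the $K$-actions and inducing the identity on $T^d$), the observation that $\mu_\C^{-1}(0)$ is exactly the union of conormal spaces to the $K_\C$-orbits, together with ``cotangent reduction commutes with K\"ahler/GIT reduction'' over the semistable locus $(\C^n)^{ss}$, gives the $T^d$-equivariant holomorphic-symplectic open embedding, and your restriction of the Marsden--Weinstein relation $\varrho^*\bar\omega_{\R}=\omega_{\R}|_{\mu_\R^{-1}(\lambda_\R)\cap\mu_\C^{-1}(0)}$ to $\{w=0\}$ correctly identifies the metric on the zero section with the reduced (Guillemin) metric determined by $\Delta$, since the zero section is a complex submanifold. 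The only hypothesis slippage --- assuming $\mathcal H_\R$ simple and unimodular rather than just $\Delta$ simple with unimodularity at its vertices --- is harmless for the theorem as quoted.

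There is, however, one step that does not hold as written: your justification of density. You argue that $\mathfrak M_{u,\lambda}$ is connected (hence irreducible, being smooth) because $\mu_\C^{-1}(0)$ is contractible via $(z,w)\mapsto(sz,w)$. But $\mathfrak M_{u,\lambda}$ is the quotient of the \emph{semistable locus} $(\mu_\C^{-1}(0))^{ss}$ only, which is an open subset of $\mu_\C^{-1}(0)$ and could a priori be disconnected; contractibility of the full level set does not transfer to the GIT quotient. The clean patch is irreducibility of $\mu_\C^{-1}(0)$ itself: it is cut out by $n-d$ equations, so every irreducible component has dimension at least $n+d$; decomposing it as the union of conormal varieties over the coordinate strata of $\C^n$, the stratum with finite stabilizer contributes one irreducible piece of dimension exactly $n+d$, while a stratum $\{z_i=0,\ i\in I\}$ with positive-dimensional stabilizer could only contribute a component of dimension $\ge n+d$ if $\mathrm{span}\{e_i\}_{i\in I}\subset\mathfrak k$, i.e.\ if some $u_i=0$, which is excluded since the $u_i$ are primitive. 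Hence $\mu_\C^{-1}(0)$ is irreducible, so its semistable open subset and the geometric quotient $\mathfrak M_{u,\lambda}$ are irreducible, and the nonempty open set $U_\Delta$ is automatically dense. (Alternatively one can simply cite connectedness of hypertoric varieties from \cite{BD}; note that Proposition \ref{prop:complement} of the paper alone does not suffice for density without such an irreducibility statement.)
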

	 $T^*X_{\Delta}\subset \mathfrak{M}_{u,\lambda}$ was constructed in \cite{BD} as follows. For simplicity, let's fix a lift $$((\hat{\lambda}_{\R,1},\ldots,\hat{\lambda}_{\R,n}),(\hat{\lambda}_{\C,1},\ldots,\hat{\lambda}_{\C,n}))\in(\mathfrak{t}^n)^*\oplus (\mathfrak{t}^n_{\C})^*$$
	of $\lambda$ such that $\hat{\lambda}_{\R,i}=0$ and $\hat{\lambda}_{\C,i}=0$ for $i=1,\ldots,d$. Let $\{H_{\R,i}^+\}_{i=1}^n$ and $\{H_{\R,i}^-\}_{i=1}^n$ be the half-spaces
	\[
	H_{\R,i}^+=\{s\in (\mathfrak{t}^d)^*|\left<s,u_i\right>-\hat{\lambda}_{\R,i}\ge 0\},
	\]
	\[
	H_{\R,i}^-=\{s\in (\mathfrak{t}^d)^*|\left<s,u_i\right>-\hat{\lambda}_{\R,i}\le 0\}.
	\]
	Let $\sigma:\{1,...,n\}\to \{+,-\}$ be the sign vector such that
	\[
	\Delta=\bigcap_{i=1}^n H_{\R,i}^{\sigma(i)},
	\]
	and let $\bar{\sigma}$ be the sign vector such that $\bar{\sigma}(i)\ne \sigma(i)$ for all $i$. Each face $F$ of $\Delta$ is given by an intersection of hyperplanes
	$\bigcap_{i\in I} H_{\R,i}$, for some $I\subset\{1,\ldots,n\}$. For $F$ a face of $\Delta$, we define a subset $Y_F\subset T^*\C^n$ by
	\[
	Y_F=\{(z,w)\in T^*\C^n|z_i=0\iff i\in I \text{ and }\sigma(i)=+; w_i=0\iff i\in I \text{ and }\sigma(i)=-\}.
	\]
	In particular, if $F$ is the codimension-$0$ face, we have $I=\emptyset$, and 
	\[
Y_{F}=\{(z,w)\in T^*\C^n|z_i\neq 0 \text{ if }\sigma(i)=+; w_i\neq 0, \text{ if }\sigma(i)=-\}.
	\]
	We define a $T^n$-invariant subset $Y_{\Delta}\subset T^*\C^n$ to be the union
	\[
	Y_{\Delta}=\bigcup_F Y_F,
	\]
	where the union is over all faces $F$ of $\Delta$. $T^*X_{\Delta}\subset\mathfrak{M}_{u,\lambda}$ is then constructed by restricting the hyper-Kähler quotient construction to $Y_{\Delta}$,
	\[
	T^*X_{\Delta}=\left(Y_{\Delta}\cap\mu_{\R}^{-1}(\lambda_{\R})\cap\mu_{\C}^{-1}(0)\right)/K.
	\]
	
	We provide here an explicit description of the complement of $T^*X_{\Delta}$ in $\mathfrak{M}_{u,\lambda}$ in term of its moment map image. Let $\mathfrak{J}$ be the collection of all subsets $J\subset\{1,\ldots,n\}$ such that the intersection $\bigcap_{j\in J} H_j$ is nonempty, and is not a face of $\Delta$. Denote by $\Delta_J$ the polytope 
	\[
	\Delta_J=\bigcap_{j\in J}H^{\bar{\sigma}(j)}_{\R,j}.
	\]
	Notice that $\Delta_J$ is non-adjacent to $\Delta$.
	\begin{prop}
		\label{prop:complement}
		The complement of $T^*X_{\Delta}$ in $\mathfrak{M}_{u,\lambda}$ is the union $V_{\Delta}=\bigcup_{J\in \mathfrak{J}} V_J$, where
		\[
		V_J=(\bar{\mu}_{\R},\bar{\mu}_{\C})^{-1}\left(\Delta_J\times \bigcap_{j\in J}H_{\C,j}\right).
		\]	
	\end{prop}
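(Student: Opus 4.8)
The plan is to unwind the definitions of $T^*X_\Delta$ and of the moment map $(\bar\mu_\R,\bar\mu_\C)$ and to check directly that a point $[z,w]\in\mathfrak{M}_{u,\lambda}$ lies in the complement $V_\Delta$ exactly when its representative $(z,w)$ fails to lie in the union $Y_\Delta=\bigcup_F Y_F$. Recall that $(z,w)\in Y_F$ precisely when the vanishing locus $I(z,w):=\{i: z_i=0 \text{ or } w_i=0\}$ is contained in the index set $I_F$ cutting out the face $F$, with the additional sign condition ($z_i=0$ forces $\sigma(i)=+$, $w_i=0$ forces $\sigma(i)=-$). Since $\mu_\C^{-1}(0)$ imposes $z_iw_i=\hat\lambda_{\C,i}-\langle v,u_i\rangle$ after the identification of Theorem \ref{thm:cplxstr} — equivalently $z_iw_i=0$ on the zero level, with the normalization chosen so that $\bar\mu_\C[z,w]$ has $i$-th coordinate $z_iw_i$ — the condition $z_i=0$ or $w_i=0$ is equivalent to $\bar\mu_\C[z,w]\in H_{\C,i}$. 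So the first step is: the set $J(p):=\{i: \bar\mu_\C(p)\in H_{\C,i}\}$ records exactly the indices at which \emph{some} coordinate vanishes, and the sign data in $Y_F$ is detected by the real moment map, namely $\bar\mu_\R(p)\in H_{\R,i}^{\sigma(i)}$ iff $|z_i|^2\ge|w_i|^2$ (for $\sigma(i)=+$) resp. $\le$ (for $\sigma(i)=-$), after the normalization $\hat\lambda_{\R,i}=0$ for $i\le d$ and with $\mu_\R^{-1}(\lambda_\R)$ fixing the remaining components.

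Second, I would show the two inclusions. If $p\notin T^*X_\Delta$, pick a representative $(z,w)$ with $\mu_\R=\lambda_\R$, $\mu_\C=0$; then $(z,w)\notin Y_F$ for every face $F$. Taking $J=J(p)$, the failure to lie in any $Y_F$ translates (via step one) into: $\bigcap_{j\in J}H_{\R,j}$ is not a face of $\Delta$ — so $J\in\mathfrak J$ — and moreover $\bar\mu_\R(p)\in H_{\R,j}^{\bar\sigma(j)}$ for each $j\in J$, i.e. $\bar\mu_\R(p)\in\Delta_J$, while $\bar\mu_\C(p)\in\bigcap_{j\in J}H_{\C,j}$ by definition of $J(p)$. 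Hence $p\in V_J$. Conversely, if $p\in V_J$ for some $J\in\mathfrak J$, then $\bar\mu_\C(p)\in\bigcap_{j\in J}H_{\C,j}$ forces every representative to have $z_j=0$ or $w_j=0$ for $j\in J$, and $\bar\mu_\R(p)\in\Delta_J$ pins down which of the two vanishes, namely the sign $\bar\sigma(j)$ opposite to $\sigma(j)$; since $\bigcap_{j\in J}H_{\R,j}$ is not a face of $\Delta$, there is no face $F$ with $I_F\supseteq J$ and the correct signs, so $(z,w)\notin Y_F$ for all $F$, giving $p\notin T^*X_\Delta$.

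Third, a couple of technical points need care. One must check that the vanishing pattern $I(z,w)$ and the sign pattern are genuinely well-defined on the quotient (they are $K$-invariant: $t\cdot(z,w)$ scales $z_i,w_i$ by $t_i^{\pm1}\in S^1$, so $|z_i|,|w_i|$ and the loci $\{z_i=0\},\{w_i=0\}$ are preserved), and that different representatives give the same $J(p)$ and same side of each $H_{\R,j}$ — again immediate from $T^n$-invariance of $\mu_\R,\mu_\C$. One should also confirm that $\Delta_J$ as defined is exactly the image $\bar\mu_\R(V_J\cap\{z,w \text{ with pattern }J\})$: the level-set constraint $\mu_\R=\lambda_\R$ together with the half-space choices $H^{\bar\sigma(j)}_{\R,j}$ for $j\in J$ and the automatic $H^{?}_{\R,i}$ for $i\notin J$ cut out precisely $\Delta_J$ because on $\Delta_J$ the remaining inequalities are consequences of the circuit relations among the $u_i$. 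Finally, one invokes Remark \ref{rmk:holfib} (or Theorem \ref{thm:cplxstr}) to know that over a point of $\bigcap_{j\in J}H_{\C,j}$ the fiber of $\bar\mu_\C$ really does have the $z_j$-or-$w_j$ vanishing built in, so that the set-theoretic description of $V_J$ as a moment-map preimage is the honest complement and not merely contained in it.

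The main obstacle is the bookkeeping in step two: matching the combinatorics of "$(z,w)\notin Y_F$ for \emph{all} faces $F$" with the single condition "$J(p)\in\mathfrak J$ and $\bar\mu_\R(p)\in\Delta_J$". The subtlety is that a point could have the right vanishing set $J$ to \emph{potentially} sit in some $Y_F$, yet be excluded by the sign conditions; one has to argue that the sign obstruction is exactly encoded by which side $\Delta_J$ of the hyperplanes $H_{\R,j}$ the real moment map lands on, and that $J\in\mathfrak J$ (i.e. $\bigcap_{j\in J}H_{\R,j}$ not a face of $\Delta$) is precisely the condition ruling out \emph{every} candidate face simultaneously. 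This is where simplicity of $\Delta$ is used: it guarantees faces are in bijection with the index sets $I$ for which $\bigcap_{i\in I}H_{\R,i}$ meets $\bar\Delta$ with the correct signs, so "not a face" is the clean negation one needs. Once this combinatorial dictionary is set up, the rest is routine.
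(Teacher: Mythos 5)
Your overall dictionary (coordinate vanishing $\leftrightarrow$ $\bar{\mu}_{\C}\in H_{\C,i}$, side of $H_{\R,i}$ $\leftrightarrow$ which of $z_i,w_i$ vanishes) is the same one the paper uses, and your converse inclusion is essentially fine. The genuine gap is in the forward direction, where you take $J=J(p):=\{i:\bar{\mu}_{\C}(p)\in H_{\C,i}\}$ and claim that $p\notin T^*X_{\Delta}$ forces $J(p)\in\mathfrak{J}$ and $\bar{\mu}_{\R}(p)\in\Delta_{J(p)}$. This is false, and in fact fails for almost every point of the complement: since $\lambda_{\C}=0$ is assumed here, all the $H_{\C,i}$ pass through the origin, and as soon as $\bar{\mu}_{\C}(p)=0$ (which is forced whenever $d$ of the relevant $u_j$ are independent) one gets $J(p)=\{1,\ldots,n\}$, whose real intersection $\bigcap_i H_{\R,i}$ is typically empty, so $J(p)\notin\mathfrak{J}$ no matter what. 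A concrete counterexample already occurs for $\mathfrak{M}_{u,\lambda}=\widetilde{\C^2/\Z_2}\cong T^*\C\bP^1$ ($n=2$, $u_1=u_2$, real hyperplanes at $a_1<a_2$), with $\Delta=(-\infty,a_1]$: here $\mathfrak{J}=\{\{2\}\}$, and the point $p=[z_1,0,z_2,0]$ with $z_1,z_2\neq 0$ lies in the complement of $T^*X_{\Delta}$ and in $V_{\{2\}}$, but $J(p)=\{1,2\}$ has $H_{\R,1}\cap H_{\R,2}=\emptyset$, so your recipe produces a set that is not even an index of the union. The second half of your claim fails too in larger examples: at an index where only the ``fiber'' coordinate vanishes (e.g.\ $w_k=0$, $z_k\neq 0$ with $\sigma(k)=+$), the real moment image lies strictly on the $\sigma(k)$ side of $H_{\R,k}$, hence outside $H^{\bar{\sigma}(k)}_{\R,k}\supset\Delta_{J}$ for every $J\ni k$, so $\bar{\mu}_{\R}(p)\notin\Delta_{J(p)}$.

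What is missing is precisely the combinatorial step you flagged as ``bookkeeping'': the index set exhibiting $p$ as a point of some $V_J$ is governed not by $\{i:z_iw_i=0\}$ but by the set of indices at which the $\Delta$-\emph{base} coordinate ($z_i$ if $\sigma(i)=+$, $w_i$ if $\sigma(i)=-$) vanishes, and even that set need not lie in $\mathfrak{J}$ (in the example above it is $\{1,2\}$); one must argue that a suitable \emph{subset} $J'$ of it belongs to $\mathfrak{J}$ and satisfies $\bar{\mu}_{\R}(p)\in\Delta_{J'}$. This selection argument is the actual content of the forward inclusion and is absent from your proposal. The paper sidesteps the issue by working upstairs in $T^*\C^n$ with the vanishing-pattern strata $Z_J$ and by proving disjointness of the moment images of $T^*X_{\Delta}$ and of $\Delta_J\times\bigcap_{j\in J}H_{\C,j}$, rather than trying to recover the correct $J$ from the moment image alone; if you want to run your pointwise double-inclusion argument, you need to supply the subset-selection step (for instance by passing to a minimal subset of the base-vanishing set not contained in any face index set and checking, using that $\bar{\mu}_{\R}(p)$ lies in the corresponding intersection of half-spaces, that its hyperplane intersection is nonempty).
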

	
	\begin{proof}
		Let $J\in\mathfrak{J}$, and denote by $Y_J\subset T^*\C^n\setminus Y_{\Delta}$ the subset
		\[
		Z_J=\{(z,w)\in T^*\C^n|z_j=0\iff j\in J \text{ and }\sigma(j)=+; w_j=0\iff j\in J \text{ and }\sigma(j)=-\}.
		\]
		Restricting the hyper-Kähler quotient construction to $Z_J$ gives 
		\[
		V_J=\left(Y_J\cap\mu_{\R}^{-1}(\lambda_{\R})\cap\mu_{\C}^{-1}(0)\right)/K\subset\mathfrak{M}_{u,\lambda}\setminus T^*X_{\Delta}.
		\]
		By construction, we have $\mathfrak{M}_{u,\lambda}\setminus T^*X_{\Delta}=\bigcup_{J\in \mathfrak{J}} V_J$. The image of $V_J$ under the hyper-Kähler moment map $(\bar{\mu}_{\R},\bar{\mu}_{\C})$ is $\Delta_J\times \bigcap_{j\in J} H_{\mathbb{C},j}$. To see that it is disjoint from the image of $T^*X_{\Delta}$, suppose we have $[z,w]\in T^*X_{\Delta}$ with $\bar{\mu}_{\C}([z,w])\in\bigcap_{j\in J} H_{\C,j}$, since $J$ does not define a face of $\Delta$, we must have $z_j\ne 0, w_j= 0$ and $\sigma(j)=+$ or $z_j=0, w_j\ne 0$ and $\sigma(j)=-$ for some $j\in J$, but then $\bar{\mu}_{\R}([z,w])\notin H^{\bar{\sigma}(j)}_{\R,j}\supset\Delta_J$.
	\end{proof}

	\begin{figure}[htb!]
		\includegraphics[scale=0.5]{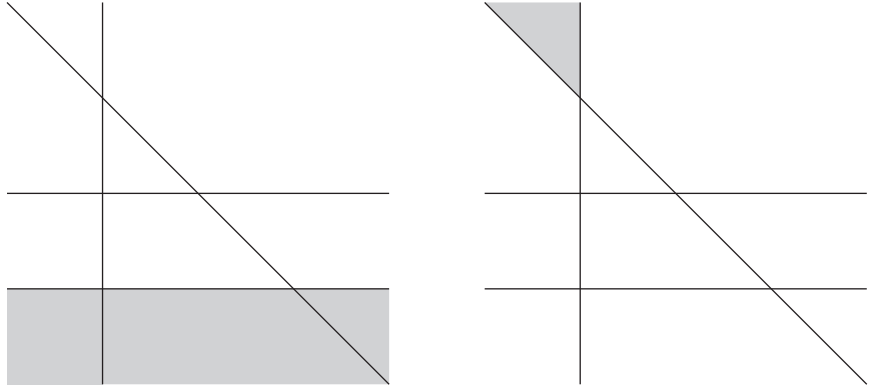}
		\caption{A hypertoric manifold that contains both $T^*\C\bP^2$ and $T^*\mathbb{F}_1$. The closure of the shaded region on the left (resp. right) corresponds to the image of the complement of $T^*\C\bP^2$ (resp. $T^*\mathbb{F}_1$) under $\bar{\mu}_{\R}$.}
		\label{fig:cotang-in-hypertoric}
	\end{figure}
	
		In this paper we work with smooth hypertoric varieties. In addition to $\mathfrak{M}_{u,\lambda}$ being smooth, we shall assume $\mathcal{H}_{\C}$ to be simple for the rest of this paper. When $\mathcal{H}_{\C}$ is simple, under the unimodularity assumption, $\mathfrak{M}_{u,\lambda}$ is smooth for all choices of $\lambda_{\R}$ by Theorem \ref{thm:smooth}. We do not assume $\mathcal{H}_{\R}$ to be simple.
		
	\section{Lagrangian torus fibrations on hypertoric varieties} \label{sec:fib}
	In this section, we construct piecewise smooth Lagrangian torus fibrations on hypertoric varieties. It was first suggested by Joyce in \cite{Joyce-sing} that special Lagrangian fibrations should in general be piecewise smooth. In \cite{AAK}, Abouzaid, Auroux and Katzarkov constructed piecewise smooth Lagrangian torus fibrations on the anticanonical divisor complement $X^0$ of the blowup $X=\mathrm{Bl}_{H\times\{0\}}V\times\C$, where $V$ is a toric variety and $H\subset V$ is a hypersurface, by pulling back Lagrangian torus fibrations on the symplectic reductions of $X^0$ (which are isomorphic to the open dense torus orbit $V^0\subset V$) and assembling them together. This construction is similar to those previously considered by Gross \cite{Gross-eg}, Goldstein \cite{Goldstein}, Castaño-Bernard and Matessi \cite{CBM1,CBM2}. The additional technical input in \cite{AAK} was the use of Moser's trick to interpolate between the reduced (possibly singular) K\"ahler forms and the torus-invariant K\"ahler forms on $V^0$.
	
	\subsection{Lagrangian torus fibrations on the reduced spaces}
	Denote by $s=(s_1,\ldots,s_n)$ the standard coordinates on $(\mathfrak{t}^n)^*$ rescaled by a factor of $2$, and $v=(v_1,\ldots,v_n)$ the standard complex-coordinates on $(\mathfrak{t}^n_{\C})^*$. We first construct Lagrangian torus fibrations on the symplectic reductions $X_s$ of $\mathfrak{M}_{u,\lambda}$ at level $\frac{s}{2}\in (\mathfrak{t}^d)^*\subset (\mathfrak{t}^n)^*$. $X_{s}$ can be constructed as
	\[
	X_{s}=\bar{\mu}_{\R}^{-1}\left(\frac{s}{2}\right)/(T^n/K).
	\]
	                                                    
	For simplicity, we will assume from now on that the vectors $u_1,\ldots,u_d$ are linearly independent, and write $u_{\ell}=\sum_{i=1}^d a_{\ell i}u_i$ for $\ell=d+1,\ldots,n$. The coefficients $a_{\ell i}$ are integers, since $\{u_1,\ldots,u_d\}$ spans $\mathfrak{t}^d_{\Z}$ over $\Z$. We also fix a lift $((\hat{\lambda}_{\R,1},\ldots,\hat{\lambda}_{\R,n}),(\hat{\lambda}_{\C,1},\ldots,\hat{\lambda}_{\C,n}))\in(\mathfrak{t}^n)^*\oplus (\mathfrak{t}^n_{\C})^*$ of $\lambda$ such that $\hat{\lambda}_{\R,i}=0$ and $\hat{\lambda}_{\C,i}=0$ for $i=1,\ldots,d$. We can then identify the $\bar{\mu}_{\C}:\mathfrak{M}_{u,\lambda}\to (\mathfrak{t}^d_{\C})^*$ with the map $(z_1w_1,\ldots,z_dw_d):\mathfrak{M}_{u,\lambda}\to \C^d$ via the projection to the first $d$ components. The restriction of $\bar{\mu}_{\C}$ to $\bar{\mu}_{\R}^{-1}(\frac{s}{2})$ descends to a biholomorphism $
      X_{s}\to (\mathfrak{t}^d_{\C})^*.$
	We can therefore identify the reduced spaces $X_{s}$ with $\C^d*$ equipped with complex-coordinates $(v_1,\ldots,v_d)$. We will abuse notations and write
	\[
	s_i=|z_i|^2-|w_i|^2, \quad v_i=z_iw_i,
	\]
	and set 
	\[
	r_i=\sqrt{s_i^2+4v_i\bar{v}_i}.
	\]
	for $i=1,\ldots,n$. These can be viewed as functions on $X_{s}$. In particular, $s_i$ are constants. The K\"ahler potential of the reduced K\"ahler form on $X_{s}$ has a simple expression in term of $s_i$ and $r_i$.
	
	\begin{lemma}
		The K\"ahler potentials $K_{red,s}$ for the reduced K\"ahler forms $\omega_{red,s}$ on $X_{s}$ are given by
		\begin{equation}
		\label{kp1}
		K_{red,s}=\frac{1}{4}\sum_{i=1}^n\left(r_i-s_i\ln|s_i\pm r_i|\right). \quad + \text{ if } s_i\ge 0, - \text{ otherwise}.
		\end{equation}
	\end{lemma}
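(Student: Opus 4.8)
The plan is to compute the reduced Kähler form by the standard recipe: the Kähler potential on a symplectic reduction $X_s = \bar\mu_\R^{-1}(s/2)/(T^n/K)$ is obtained by restricting the ambient Kähler potential to the level set and descending to the quotient, and this is exactly what Equation (\ref{KP}) encodes at the level of $\mathfrak{M}_{u,\lambda}$. So the first step is to take the expression $\sum_{i=1}^n\bigl(r_i + 2\hat\lambda_{\R,i}\ln(s_i+r_i)\bigr)$ appearing in (\ref{KP}), use the normalization $\hat\lambda_{\R,i}=0$ for $i=1,\dots,d$ that was fixed above, and track what happens on $\bar\mu_\R^{-1}(s/2)$. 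On this fiber the real moment map values $s_i = |z_i|^2 - |w_i|^2$ become constants (for all $i=1,\dots,n$, once we use $u_\ell = \sum_i a_{\ell i} u_i$ to express the $\ell$-th constraint in terms of the first $d$), and only $v_i = z_i w_i$ (equivalently $r_i = \sqrt{s_i^2 + 4 v_i\bar v_i}$) remain as genuine coordinates, via the identification $X_s \cong \C^d$ with coordinates $(v_1,\dots,v_d)$.

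Next I would carefully handle the relation between $\hat\lambda_{\R,i}$ and $s_i$. The point of level sets of $\bar\mu_\R$ is that the hyperplane-arrangement shift $\hat\lambda_{\R,i}$ gets absorbed into the constant $s_i$ on the reduced space — i.e., on $X_s$ one has $s_i + \hat\lambda_{\R,i}$ playing the role of the "shifted" coordinate, but since we've arranged $\hat\lambda_{\R,i}=0$ for $i\le d$ and the $\ell$-th coordinate ($\ell > d$) is determined by $s_\ell = \sum_i a_{\ell i} s_i + \hat\lambda_{\R,\ell}$ (up to sign conventions from the factor-of-$2$ rescaling), the net effect is that the term $2\hat\lambda_{\R,i}\ln(s_i+r_i)$ in (\ref{KP}) reorganizes, after the substitution $dd^c = 4 i\,\partial\bar\partial$ and the descent through $(2\bar\mu_\R,\bar\mu_\C)^*$, into $-\tfrac14 s_i \ln|s_i \pm r_i|$. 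The sign choice ($+$ when $s_i \ge 0$, $-$ otherwise) comes from requiring the logarithm's argument to be positive: when $s_i \ge 0$ we have $s_i + r_i > 0$, while when $s_i < 0$ we have $s_i + r_i$ possibly small and it is cleaner (and equal up to a $v$-independent constant, hence irrelevant for the Kähler form) to use $r_i - s_i = |s_i - r_i|$ since $r_i \ge |s_i|$. So the two branches of (\ref{kp1}) differ from a single formula only by a constant, which I would note explicitly so that the final answer is genuinely a Kähler potential for $\omega_{red,s}$.

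The main obstacle — and where I'd spend the most care — is the bookkeeping of constants and normalizations: the factor-of-$2$ rescaling defining $s$, the $\tfrac14$ in (\ref{KP}) versus the $\tfrac14$ in (\ref{kp1}), the relation $d^c = i(\bar\partial - \partial)$ so that $dd^c = 2i\,\partial\bar\partial$ and $\tfrac14 dd^c = \tfrac{i}{2}\partial\bar\partial$, and the fact that adding a pluriharmonic (here, $v$-independent) function does not change the form. I would verify the final formula by a direct check in the simplest case — say $\mathfrak{M}_{u,\lambda} = T^*\C$ with $n=1$, $d=1$ — computing $\omega_{red,s} = \tfrac{i}{2}\partial\bar\partial K_{red,s}$ by hand and confirming it is the correct (positive) $(1,1)$-form on $\C$ with coordinate $v_1$, which pins down all signs and constants. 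Once the $T^*\C$ case checks out, the general case is the same computation performed coordinatewise in the $v_i$ together with the linear-algebra substitution $s_\ell \mapsto \sum_{i=1}^d a_{\ell i} s_i$ (up to the constant shift), and linearity of $\partial\bar\partial$ finishes it.
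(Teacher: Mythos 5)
There is a genuine gap at the heart of your plan: the premise that the reduced potential is obtained ``by restricting the ambient Kähler potential to the level set and descending to the quotient.'' That recipe is false for Kähler reduction, and in this instance it demonstrably cannot produce (\ref{kp1}). Restricting the bracketed function in (\ref{KP}) to $\bar{\mu}_{\R}^{-1}(\tfrac{s}{2})$ gives $\sum_i\bigl(r_i+2\hat{\lambda}_{\R,i}\ln(s_i+r_i)\bigr)$ with $\hat{\lambda}_{\R,i}=0$ for $i\le d$, so for those $i$ there would be no logarithmic term at all, whereas the correct potential carries $-\tfrac14\,s_i\ln|s_i\pm r_i|$ for \emph{every} $i$; the coefficient of the logarithm is (minus) the reduction level $s_i$, which varies with $s$, and no ``reorganization'' of the fixed constants $\hat{\lambda}_{\R,i}$ can generate it. The missing ingredient is exactly what the paper's proof supplies: the Burns--Guillemin description of the potential of a Kähler/GIT quotient, $\hat{K}(z,w)=K_0\bigl(\vec{t}_{(z,w)}\cdot(z,w)\bigr)+\tfrac{1}{4\pi}\ln|\chi_{s/2}(\vec{t}_{(z,w)})|^2$, applied to $W=\mu_{\C}^{-1}(\lambda_{\C})$ with the full $T^n$-action. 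The character term, linear in the level $s$, is what produces $-\tfrac14 s_i\ln(s_i+r_i)$ after discarding the terms $\tfrac14 s_i\ln(2|z_i|^2)$, which are pluriharmonic on $W$ (not on $X_s$!). Equivalently, as the paper's remark notes, one can get (\ref{kp1}) as a Legendre transform of $F$ in the $s$-variables --- again it is the subtraction $s\cdot\partial_s$ that creates the $-s_i\ln(\cdot)$ coefficients, something a plain restriction cannot do.

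Two secondary points. First, your fallback of checking $T^*\C$ and then arguing ``coordinatewise'' does not close the gap: $X_s$ is the reduction of $W=\mu_{\C}^{-1}(\lambda_{\C})$ by all of $T^n$, and $W$ is not a product of the coordinate factors (the constraints $z_\ell w_\ell=\sum_i a_{\ell i}z_iw_i+b_\ell$ couple them), so the general case needs the quotient-potential formula (or the Legendre-transform argument), not linearity of $\partial\bar\partial$ alone. Second, your claim that the two sign branches of (\ref{kp1}) ``differ only by a constant'' is incorrect: for fixed $s_i<0$ the difference is $-s_i\ln(4|v_i|^2)+2s_i\ln(r_i-s_i)$, whose second term is not pluriharmonic in $v_i$; the $\pm$ arises because the pluriharmonic terms discarded on $W$ are $s_i\ln|z_i|^2$ versus $s_i\ln|w_i|^2$, depending on which coordinate is nonvanishing, not because of an additive constant. (Also, with $d^c=\sqrt{-1}(\bar\partial-\partial)$ one has $dd^c=2\sqrt{-1}\,\partial\bar\partial$, not $4\sqrt{-1}\,\partial\bar\partial$ as written early in your proposal.) Your instinct to pin down signs via the $n=d=1$ model is good --- indeed a direct computation there shows $\omega_{red,s}=\tfrac{\sqrt{-1}}{2r}\,dv\wedge d\bar v=dd^c\tfrac14\bigl(r-s\ln(s+r)\bigr)$ --- but it is a consistency check, not a substitute for the quotient-potential argument that the proof actually requires.
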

	
	\begin{proof}
		Consider the action of $T^n$ and its complexification $(\C^{\times})^n$ restricted to the invariant subvariety $W=\mu_{\C}^{-1}(\lambda_{\C})\subset T^*\C^n$. $X_{s}$ can be obtained either as a symplectic reduction or a GIT quotient of $W$,
		\[
		X_{s}=(\tilde{\mu}_{\R}|_{W})^{-1}\left(\frac{s}{2}\right)/T^n=W\twobar_{\frac{s}{2}} (\C^{\times})^n,
		\]
		where $\tilde{\mu}_{\R}$ is the moment map for the $T^n$-action on $T^*\C^n$ with respect $\omega_{\R}$. For any $(z,w)\in W$, there exist a unique element $\vec{t}_{(z,w)}\in \exp(i\mathfrak{t}^n)$ such that $\vec{t}_{(z,w)}\cdot (z,w)\in (\tilde{\mu}_{\R}|_{W})^{-1}\left(\frac{s}{2}\right)$. Denote by $q:W\to (\tilde{\mu}_{\R}|_{W})^{-1}\left(\frac{s}{2}\right)$ the map $q(z,w)=\vec{t}_{(z,w)}\cdot (z,w)$, and by $p:(\tilde{\mu}_{\R}|_{W})^{-1}\left(\frac{s}{2}\right)\to X_{s}$ the quotient map. Let $\hat{\omega}$ be the pull-back of $\omega_{red,s}$ on $W$ via $p\circ q$. Let $\chi_{\frac{s}{2}}:(\C^{\times})^n\to\C^{\times}$ the character given by $\frac{s}{2}$. By \cite[Theorem 7]{BG}, we have $\hat{\omega}=dd^c\hat{K}$, for a $T^n$-invariant function $\hat{K}$ on $W$ defined as
		\begin{equation}
		\label{kp2}
		\hat{K}(z,w)=K_0(\vec{t}_{(z,w)}\cdot (z,w))+\frac{1}{4\pi}\ln|\chi_{\frac{s}{2}}(\vec{t}_{(z,w)})|^2,
		\end{equation}
		where $K_0$ is the K\"ahler potential $\frac{1}{4}\sum_{i=1}^n |z_i|^2+|w_i|^2$ restricted to $W$. We have
        \begin{equation}
		\label{eq1}
		K_0\left(\vec{t}_{(z,w)}\cdot (z,w)\right)=\frac{1}{4}\sum_{i=1}^n \sqrt{s_i^2+4v_i\bar{v}_i},
       \end{equation}
		whereas
		\[
		|\chi_{\frac{s}{2}}(t_{(z,w)})|^2=\prod_{i=1}^n |\vec{t}_{(z,w),i}|^{-2\pi s_i}.
		\]
		$\vec{t}_{(z,w),i}$ is determined by
		\[
		\left|(\vec{t}_{(z,w),i}z_i\right|^2-\left|\vec{t}_{(z,w),i}^{-1}w_i\right|^2=s_i.
		\]
		This means
		\[
		|\vec{t}_{(z,w),i}|^2=\cfrac{s_i\pm \sqrt{s_i^2+4|z_i|^2|w_i|^2}}{2|z_i|^2}.
		\]
		Thus,
        \begin{equation}
		\label{eq2}
		\frac{1}{4\pi}\ln|\chi_{\frac{s}{2}}(\vec{t}_{(z,w)})|^2=\frac{1}{4}\sum_{i=1}^n -s_i\ln\left|s_i\pm \sqrt{s_i^2+4|z_i|^2|w_i|^2}\right|+s_i\ln(2|z_i|^2).
       \end{equation}
		Notice that $s_i$ in (\ref{eq1}) and (\ref{eq2}) are constants. Denote by $\iota:(\tilde{\mu}_{\R}|_{W})^{-1}\left(\frac{s}{2}\right)\to W$ the inclusion map. Since $dd^c\ln(2|z_i|^2)=0$, the terms $\frac{1}{4}s_i\ln(2|z_i|^2)$ do not contribute to $\hat{\omega}$. Thus, we have
		\[
		p^*\omega_{red,s}=\iota^*\hat{\omega}=\iota^*dd^c\left(\hat{K}(z,w)-\frac{1}{4}\sum_{i=1}^n s_i\ln(2|z_i|^2)\right)=\iota^*dd^c(p\circ q)^*K_{red,s}=p^*dd^c K_{red,s}.
		\]
	\end{proof}
	
	\begin{remark}
		If we view
		\[
		F=\frac{1}{4}\sum_{i=1}^n \left(r_i-s_i\ln|s_i+r_i|\right)
		\]
		as a function on $(\mathfrak{t}^n)^*\oplus (\mathfrak{t}^n_{\C})^*$, it is then the Legendre transform of the Kähler potential $\frac{1}{4}\sum_{i=1}^n|z_i|^2+|w_i|^2$ on $T^*\C^n$.  In \cite{BD}, (\ref{KP}) was obtained as the Legendre transform of $F$ restricted to the subspace $(\mathfrak{t}^d)^*\oplus (\mathfrak{t}^d_{\C})^*\subset (\mathfrak{t}^n)^*\oplus (\mathfrak{t}^n_{\C})^*$. We can alternatively derive (\ref{kp1}) as the Legendre transform of $F$ further restricted to the subspace $\{\frac{s}{2}\}\times(\mathfrak{t}^d_{\C})^*\subset (\mathfrak{t}^n)^*\oplus (\mathfrak{t}^n_{\C})^*$. 
	\end{remark}
	
	The reduced K\"ahler forms $\omega_{red,s}$ are singular along the hyperplane $H_{\C,i}$, when $s\in H_{\R,i}$. They are also not invariant under any obvious $T^d$-action on $X_{s}\cong\C^d$. These obstacles to constructing Lagrangian torus fibrations on the reduced spaces were also encountered in \cite{AAK}. We will use their strategy to construct Lagrangian torus fibrations on $X_{s}$.
	
	We first introduce an explicit family of smoothing $\omega_{sm,s}$ of $\omega_{red,s}$:
	\begin{equation}
	\label{kp3}
	\omega_{sm,s}=dd^c K_{sm,s}:=\frac{1}{4}dd^c \left(\sum_{i=1}^n\sqrt{s_i^2+4v_i\bar{v}_i+\kappa^4}-s_i\ln\left|s_i+\sqrt{s_i^2+4v_i\bar{v}_i+\kappa^4}\right|\right),
	\end{equation}
	where $\kappa>0$ is an arbitrarily small constant. $\omega_{sm,s}$ is Kähler by construction. Since $H^2(X_s;\R)=0$, we have $[\omega_{sm,s}]=[\omega_{red,s}]$. We write $v_{\ell}=\sum_{i}^d a_{\ell i}v_i+b_{\ell}$ for $\ell=d+1,\ldots,n$, where $b_{\ell}\in\C$ are constants determined by $\lambda_{\C}$. Notice that the terms
	\[
	\frac{1}{4}dd^c \left(\sum_{\ell=d+1}^n\sqrt{s_{\ell}^2+4v_{\ell}\bar{v}_{\ell}+\kappa^4}-s_i\ln\left|s_{\ell}+\sqrt{s_{\ell}^2+4v_{\ell}\bar{v}_{\ell}+\kappa^4}\right|\right)
	\]
	in (\ref{kp3}) are not invariant under the standard $T^d$-action centered at a point in $\C^d$. To remedy this, let $c=(c_1,\ldots,c_d)\in\C^d$ be a point away from the hyperplanes in $\mathcal{H}_{\C}$, and $T^d$ acts on $\C^d$ by the standard action centered at $c$. We isotope $\omega_{sm,s}$ to the family of $T^d$-invariant Kähler form $\omega_{inv,s}$ defined by averaging $\omega_{sm,s}$ over the $T^d$-action,
	\[
	\omega_{inv,s}=\cfrac{1}{(2\pi)^d}\int_{g\in T^d} g^*\omega_{sm,s}dg.
	\]
	Since $\omega_{inv,s}$ is the exterior derivative of a $T^d$-invariant $1$-form (which is the $T^d$-average of $d^c K_{sm,s}$), its pullback to each $T^d$-orbit must vanish.  This means the $T^d$-orbits in $X_{s}$ are Lagrangian with respect to $\omega_{inv,s}$.
	
	We now prove the following lemma.
	
	\begin{lemma}
		\label{lemma:moser}
		There exists a family of homeomorphisms $(\phi_{s})_{s\in (\mathfrak{t}^d)^*}$ of $X_{s}$ such that
		\begin{enumerate}[label=\textnormal{(\arabic*)}]
			\item $\phi_s$ is a diffeomorphism if $s\notin H_{\R,i}$ for $i=1,\ldots,d$. It is a diffeomorphism away from $H_{\C,i}$ if $s\in H_{\R,i}$;
			\item $\phi_s$ intertwines the reduced (possibly singular)Kähler form $\omega_{red,s}$ and the $T^d$-invariant Kähler form $\omega_{inv,s}$;
			\item $\phi_{s}$ depends on $s$ continuously, and smoothly away from $\bigcup_{i=1}^n H_{\R,i}$.
		\end{enumerate}
	\end{lemma}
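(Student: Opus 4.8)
The plan is to run Moser's trick twice. Since $X_s\cong\C^d$ has $H^2(X_s;\R)=0$, each of $\omega_{red,s}$, $\omega_{sm,s}$ and $\omega_{inv,s}$ equals $dd^c$ of one of the explicit potentials above, and any two of them differ by $dd^c$ of an explicit bounded function; Moser's trick turns such a potential difference into a time-dependent vector field whose flow realizes the desired map. I would separate the honestly singular passage from $\omega_{red,s}$ to the smoothing $\omega_{sm,s}$ from the purely smooth passage from $\omega_{sm,s}$ to its $T^d$-average $\omega_{inv,s}$; this is precisely the strategy of \cite{AAK}, to which I would appeal for the delicate estimates.

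The smooth stage is standard. Both $\omega_{sm,s}$ and $\omega_{inv,s}=\frac{1}{(2\pi)^d}\int_{T^d}g^*\omega_{sm,s}\,dg$ are genuine smooth Kähler forms on $\C^d$, cohomologous since both are exact, with $\omega_{inv,s}-\omega_{sm,s}=dd^c h_s$ where $h_s=\frac{1}{(2\pi)^d}\int_{T^d}g^*K_{sm,s}\,dg-K_{sm,s}$ is smooth. The linear interpolation $(1-t)\omega_{sm,s}+t\,\omega_{inv,s}$ stays Kähler (convex combinations and $T^d$-averages of Kähler forms are Kähler), so the Moser vector field $V_{s,t}$ solving $\iota_{V_{s,t}}\big((1-t)\omega_{sm,s}+t\,\omega_{inv,s}\big)=-d^c h_s$ is smooth; the explicit asymptotics of $K_{sm,s}$ (linear growth in $|v|$, matched by the interpolating forms) give $V_{s,t}$ at most linear growth, hence completeness, and its time-one flow $\psi_s$ is a diffeomorphism with $\psi_s^*\omega_{inv,s}=\omega_{sm,s}$, depending smoothly on $s$ because all the data do.

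For the singular stage, write $Z_s=\bigcup_{i:\,s\in H_{\R,i}}H_{\C,i}$ for the union of hyperplanes along which $\omega_{red,s}$ degenerates: when $s\in H_{\R,i}$ one has $s_i=0$, $r_i=2|v_i|$, so $K_{red,s}$ contains the summand $\tfrac12|v_i|$ and $\omega_{red,s}$ blows up (of order $|v_i|^{-1}$) transverse to each component of $Z_s$, while being a smooth Kähler form on $X_s\setminus Z_s$. The difference $g_s=K_{sm,s}-K_{red,s}$ is bounded and continuous on all of $X_s$ and smooth on $X_s\setminus Z_s$: the singular parts cancel up to the smooth $\kappa$-correction $\tfrac14\big(\sqrt{4|v_i|^2+\kappa^4}-2|v_i|\big)$, which is merely Lipschitz across $Z_s$. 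On $X_s\setminus Z_s$ the path $(1-t)\omega_{red,s}+t\,\omega_{sm,s}$ is Kähler and cohomologous to $\omega_{red,s}$, with associated Moser vector field $W_{s,t}$ solving $\iota_{W_{s,t}}\big((1-t)\omega_{red,s}+t\,\omega_{sm,s}\big)=-d^c g_s$. Since $\omega_{red,s}$ is large near $Z_s$ while $d^c g_s$ stays bounded, $W_{s,t}$ is small near $Z_s$ and vanishes on $Z_s$; the estimate of \cite{AAK} then shows that integral curves issuing from $X_s\setminus Z_s$ remain there for all $t\in[0,1]$, that the time-one maps converge as the initial point approaches $Z_s$, and that the resulting diffeomorphism of $X_s\setminus Z_s$ extends to a homeomorphism $\chi_s$ of $X_s$ that restricts to the identity on $Z_s$ and satisfies $\chi_s^*\omega_{sm,s}=\omega_{red,s}$ (as Kähler forms off $Z_s$, hence as positive currents on $X_s$). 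When $s$ avoids all the real hyperplanes, $Z_s=\emptyset$, $\omega_{red,s}$ is already smooth, and this stage is an ordinary smooth Moser, so $\chi_s$ is then a diffeomorphism depending smoothly on $s$. Setting $\phi_s=\psi_s\circ\chi_s$ gives $\phi_s^*\omega_{inv,s}=\chi_s^*\psi_s^*\omega_{inv,s}=\chi_s^*\omega_{sm,s}=\omega_{red,s}$, and $\phi_s$ inherits properties (1)--(3) from $\psi_s$ and $\chi_s$.

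The main obstacle is the singular Moser estimate for the passage from $\omega_{red,s}$ to $\omega_{sm,s}$: that the flow is well-defined on $X_s\setminus Z_s$, pushes points away from $Z_s$ rather than onto it (so the time-one map stays injective), and extends to a genuine homeomorphism across $Z_s$ that depends continuously on $s$. This reduces to a local computation near a point of $H_{\C,i}$, where $\omega_{red,s}$ is Euclidean plus $dd^c(\tfrac12|v_i|)$ plus smooth and $\omega_{sm,s}$ replaces $\tfrac12|v_i|$ by $\tfrac14\sqrt{4|v_i|^2+\kappa^4}$, and is exactly the estimate carried out in \cite{AAK}. The one additional point here is that we do not assume $\mathcal{H}_{\R}$ simple, so several components of $Z_s$ may meet; one then has to run the same analysis along their common strata, where the local model is a product of the one-variable models, and check that the resulting maps glue continuously.
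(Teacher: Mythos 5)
Your two-stage plan coincides with the paper's: first deform $\omega_{red,s}$ to the explicit smoothing $\omega_{sm,s}$, then deform $\omega_{sm,s}$ to its $T^d$-average $\omega_{inv,s}$ by a Moser argument with linear interpolation, and compose; the second stage is essentially identical to the paper's Step 2 (there the completeness is obtained from the estimate $\norm{V'_{t,s}}'_{t,s}=O(|v|^{1/2})$ together with an auxiliary complete metric, rather than from your linear-growth remark, but the content is the same). Where you genuinely differ is the singular stage. You interpolate linearly between $\omega_{red,s}$ and $\omega_{sm,s}$, so every intermediate form is still singular along $Z_s$, and you delegate the behavior of the flow near $Z_s$ to the estimates of \cite{AAK}; the paper instead moves through the family of potentials with $r_{t,i}=\sqrt{s_i^2+4|v_i|^2+t^4}$, $t\in[0,\kappa]$, so that the forms are smooth K\"ahler for every $t>0$, and it proves the key point by hand: expressing the Moser field through the inverse of the coefficient matrix $A$ of $\omega_{t,s}$, a cofactor-versus-determinant comparison shows the field extends continuously by zero along the degenerate locus, and the decay $\norm{V_{t,s}}_{t,s}=O(|v|^{-3/2})$ excludes escape to infinity, so the time-$\kappa$ flow exists. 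Your route can be made to work, but two points need repair. First, when several complex hyperplanes degenerate simultaneously (which happens whenever $s$ lies on several $H_{\R,i}$, even for simple $\mathcal{H}_{\R}$), the product local model you invoke is justified by the standing assumption that $\mathcal{H}_{\C}$ is simple --- this is exactly what the paper uses to make the degenerating directions independent coordinates in its determinant estimate --- whereas your closing remark attributes the issue to non-simplicity of $\mathcal{H}_{\R}$, which is not what is at stake; moreover the model in \cite{AAK} is a single smooth hypersurface, so the reduction to a product of one-variable models must be argued, not just cited. Second, your singular stage says nothing about completeness at infinity for the flow of $W_{s,t}$; the analogue of the paper's $O(|v|^{-3/2})$ bound does hold for your interpolation because $d^c(K_{sm,s}-K_{red,s})$ decays, but it has to be stated, since otherwise the time-one map need not be defined on all of $X_s\setminus Z_s$. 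With these two points supplied, your argument is a correct variant of the paper's proof; what the paper's choice of path buys is that all intermediate forms are smooth and the whole singular analysis is a concrete linear-algebra estimate, at the cost of being tied to the explicit potentials, while your linear interpolation is more flexible but leans harder on imported estimates.
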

	
	\begin{proof}
		We construct $\phi_{s}$ as the composition of $\phi_{sm,s}$ and $\phi_{inv,s}$ such that $\phi_{sm,s}$ takes $\omega_{red,s}$ to $\omega_{sm,s}$, and $\phi_{inv,s}$ takes $\omega_{sm,s}$ to $\omega_{inv,s}$, each satisfying the desired properties.
		
		\textbf{Step 1.} We interpolate between $\omega_{red,s}$ and $\omega_{sm,s}$ via the family of Kähler forms $\omega_{t,s}$, $t\in[0,\kappa]$, defined by
		\begin{equation}
		\label{moser1}
		\omega_{t,s}=dd^c K_{t,s}:=\frac{1}{4}dd^c\left(\sum_{i=1}^n r_{t,i}-s_i\ln|s_i+r_{t,i}|\right),
		\end{equation}
		where $r_{t,i}=\sqrt{s_i^2+4v_i\bar{v_i}+t^4}$. We use Moser's trick and look for the vector field $V_{t,s}$ satisfying
		\[
		\mathcal{L}_{V_{t,s}}\omega_{t,s}+\frac{d}{dt}\omega_{t,s}=	\mathcal{L}_{V_{t,s}}\omega_{t,s}+dd^c\left(\cfrac{dK_{t,s}}{dt}\right)=0	.
		\]
		By Cartan's formula, we have
		\[
		d\iota_{V_{t,s}}\omega_{t,s}=-dd^c\left(\cfrac{dK_{t,s}}{dt}\right),
		\]
		from which we deduce
		
		\begin{equation*}
		\iota_{V_{t,s}}\omega_{t,s}=a_{t,s}:=-d^c\left(\cfrac{dK_{t,s}}{dt}\right)=-\frac{1}{2}d^c\left(\sum_{i=1}^n \cfrac{t^3}{s_i+r_{t,i}}\right).
		\end{equation*}
		
		We write $u_{\ell}=\sum_{i=1}^d a_{\ell i}u_i$ for $\ell=1,\ldots,n$, where $a_{\ell i}=\delta_{\ell i}$ for $\ell=1,\ldots,d$. We denote $\bm{i}=\sqrt{-1}$ so that it is not confused with the index $i$. We have
		\begin{equation*}
\omega_{t,s}=\sum_{1\le i,j\le d}\omega_{t,s,ij}dv_i\wedge d\bar{v}_j:=\bm{i}\sum_{1\le i,j\le d}\Bigg(\sum_{\ell=1}^n a_{\ell i}a_{\ell j}\left(\cfrac{(s_{\ell}+r_{t,\ell})r_{t,\ell}-2|v_{\ell}|^2}{(s_{\ell}+r_{t,\ell})^2r_{t,\ell}}\right)\Bigg)dv_i\wedge d\bar{v}_j,
		\end{equation*}
		
and
		\begin{equation*}
a_{t,s}=\sum_{i=1}^d a_{t,s,i} d\bar{v}_i-\bar{a}_{t,s,i} dv_i
:=\bm{i} \sum_{i=1}^d\left(\sum_{\ell=1}^n
\cfrac{t^3 a_{\ell i}v_{\ell}}{(s_{\ell}+r_{t,\ell})^2r_{t,\ell}}\right)d\bar{v}_i-\left(\sum_{\ell=1}^n \cfrac{t^3 a_{\ell i}\bar{v}_{\ell}}{(s_{\ell}+r_{t,\ell})^2r_{t,\ell}}\right)dv_i.
		\end{equation*}
		Denote by $A=(A_{ij})$ the matrix with entries $A_{ij}=\omega_{t,s,ij}$, and let $A^{-1}=(A^{ji})$ be its inverse. The vector field $V_{t,s}$ is then given by
		\[
		V_{t,s}=\sum_{j=1}^d f_{t,s,j}\frac{\partial}{\partial v_j}+g_{t,s,j}\frac{\partial}{\partial \bar{v}_j}=\sum_{j=1}^d\left(\sum_{i=1}^d A^{ji}
		a_{t,s,i}\right)\frac{\partial}{\partial v_j}+\left(\sum_{i=1}^d A^{ji}
		\bar{a}_{t,s,i}\right)\frac{\partial}{\partial \bar{v}_j}.
		\]
		$V_{t,s}$ is smooth except when $t=0$ and $s\in H_{\R,i}$, in which case it is singular along $H_{\C,i}$. We will show that the flow of $V_{t,s}$ is well-defined and $V_{t,s}$ is complete.
		
		Let $I\subset\{1,\ldots,n\}$ be a multi-index such that $\bigcap_{k\in I} H_{\C,k}\ne\emptyset$. Let $s\in(\mathfrak{t}^d)^*$ be a point such that $s\in H_{\R,k}$ if and only if $k \in I$, and let $v_0\in\bigcap_{k\in I} H_{\C,k}$. To analyze the singularities of the functions $f_{t,s,j}$ (the analysis for $g_{t,s,j}$ is identical and hence omitted), we consider the following limits:
		\begin{equation*}
		\label{limit}
		\lim_{(t,v)\to (0,v_0)}f_{t,s,j}=\lim_{(t,v)\to (0,v_0)}\sum_{i=1}^d A^{ji}a_{t,s,i}=\lim_{(t,v)\to (0,v_0)}\sum_{i=1}^d\frac{C_{ji}}{\det A}a_{t,s,i},
		\end{equation*}
		where $C_{ji}$ is the $(j,i)$-cofactor of $A$. Since $\bigcap_{k\in J} H_{\C,k}\ne\emptyset$ and the hyperplane arrangement $\mathcal{H}_{\C}$ is simple, the vectors $\{u_k\}_{k\in I}$ are linearly independent. Thus we can assume $I\subset\{1,\ldots,d\}$ by rearranging the indices (notice that the coefficients in $u_{\ell}=\sum_{i=1}^d a_{\ell i}u_i$, $\ell=d+1,\ldots,n$, will change accordingly).  		
		Let $A_I$ be the matrix obtained from $A$ by removing the $k^{\mathrm{th}}$ row and column for $k\in I$. Denote by $A_{I,ij}$ the matrix obtained from $A_I$ by removing the $j^{\mathrm{th}}$ row and the $i^{\mathrm{th}}$. Note that $\det A_I\ne 0$ since $A_I$ is positive-definite, and $\det A_{I,ij}$ is non-singular. As $(t,v)\to (0,v_0)$, $\det A$ is dominated by the term
		\[
		\left(\prod_{k\in I}\cfrac{(s_k+r_{t,k})r_{t,k}-2|v_k|^2}{(s_k+r_{t,k})^2r_{t,k}}\right)\times\det A_I.
		\]
		while $C_{ji}$ is dominated by the term
		\[
		\left(\prod_{k\in I\setminus\{i,j\}}\cfrac{(s_k+r_{t,k})r_{t,k}-2|v_k|^2}{(s_k+r_{t,k})^2r_{t,k}}\right)\times\det A_{I,ij},
		\]
		As $(t,v)\to (0,v_0)$, $C_{ji}$ blows up of at most the same order as $\det A$, while $a_{t,s,i}$ vanishes. This shows that $V_{t,s}$ extends continuously to be zero along its singular loci.
		
		On the other hand, let $g_{t,s}$ be the Kähler metric determined by $\omega_{t,s}$. Since the  Kähler metric on $\mathfrak{M}_{u,\lambda}$ is complete, $g_{t,s}$ is a complete metric whenever it is non-singular. Denote by $\norm{\cdot}_{t,s}$ be the norm with respect to $g_{t,s}$. Since $V_{t,s}$ is  dual vector field of $a_{t,s}$, we have
		\[
		\norm{V_{t,s}}_{t,s}=\norm{a_{t,s}}_{t,s}\le 2\sum_{i=1}^d |a_{t,s,i}|\norm{dv_i}_{t,s}=O(|v|^{-\frac{3}{2}})		\]
		as $|v|\to\infty$. Thus, $V_{t,s}$ is uniformly bounded with respect to $g_{t_0,s}$, $t_0>0$.. We can therefore define $\phi_{sm,s}$ to be the time-$\kappa$ flow generated by $V_{t,s}$.
		
		\textbf{Step 2.} We interpolate between $\omega_{sm,s}$ and $\omega_{inv,s}$ via the family of Kähler forms $\omega'_{t,s}$, $t\in[0,1]$, defined by
		\begin{equation*}
		\omega'_{t,s}=t\omega_{inv,s}+(1-t)\omega_{sm,s}=dd^c	\left(tK_{inv,s}+(1-t)K_{sm,s}\right),
		\end{equation*}
		where $K_{sm,s}$ is defined as in (\ref{kp3}), and
		\[
		K_{inv,s}=\cfrac{1}{(2\pi)^d}\int_{g\in T^d} g^*K_{sm,s}dg.
		\]
		We again use Moser's trick and look for the vector field $V'_{t,s}$ satisfying
		\[
		\mathcal{L}_{V'_{t,s}}\omega'_{t,s}+\frac{d}{dt}\omega'_{t,s}=0	.
		\]
		By Cartan's formula, we have
		\[
		d\iota_{V'_{t,s}}\omega'_{t,s}=\omega_{sm,s}-\omega_{inv,s},
		\]
		from which we deduce
		\begin{equation*}
		\label{mt1}
		\iota_{V'_{t,s}}\omega'_{t,s}=a'_{t,s}=d^c\left(K_{sm,s}-K_{inv,s}\right).
		\end{equation*}
		Writing out the relevant terms explicitly, we have
		\begin{multline*}
		\omega'_{t,s}=\sum_{1\le i,j\le d}\omega'_{t,s,ij}dv_i\wedge d\bar{v}_j:=\bm{i}\sum_{1\le i,j\le d}\Bigg(\sum_{\ell=1}^n \cfrac{t a_{\ell i}a_{\ell j}}{(2\pi)^d}\int_{g\in T^d} g^*\left(\cfrac{\left((s_{\ell}+r_{\kappa,\ell})r_{\kappa,\ell}-2|v_{\ell}|^2\right)}{(s_{\ell}+r_{\kappa,\ell})^2r_{\kappa,\ell}}\right)dg \\
		+ (1-t)a_{\ell i}a_{\ell j}\left(\cfrac{\left((s_{\ell}+r_{\kappa,\ell})r_{\kappa,\ell}-2|v_{\ell}|^2\right)}{(s_{\ell}+r_{\kappa,\ell})^2r_{\kappa,\ell}}\right)\Bigg)dv_i\wedge d\bar{v}_j,
		\end{multline*}	
		and
		\begin{multline*}
		a'_{t,s}=\sum_{i=1}^d a'_{t,s,i} d\bar{v}_i-\bar{a}'_{t,s,i} dv_i
		:=\bm{i}\sum_{i=1}^d\left(\sum_{\ell=1}^n\left(\cfrac{a_{\ell i}v_{\ell}}{s_{\ell}+r_{\kappa,\ell}}\right)-\cfrac{1}{(2\pi)^d}\int_{g\in T^d} g^*\left(\cfrac{a_{\ell i}v_{\ell}}{s_{\ell}+r_{\kappa,\ell}}\right)dg\right)d\bar{v}_i\\
		-\left(\sum_{\ell=1}^n\left(\cfrac{a_{\ell i}\bar{v}_{\ell}}{s_{\ell}+r_{\kappa,\ell}}\right)-\cfrac{1}{(2\pi)^d}\int_{g\in T^d} g^*\left(\cfrac{a_{\ell i}\bar{v}_{\ell}}{s_{\ell}+r_{\kappa,\ell}}\right)dg\right)dv_i.
		\end{multline*}
		
		Let $g'_{t,s}$ be the complete Kähler metric determined by $\omega'_{t,s}$. Let $\norm{\cdot}'_{t,s}$ be the norm with respect to $g'_{t,s}$. Since $V'_{t,s}$ is the dual vector field of $a'_{t,s}$, we have
		\[
		\norm{V'_{t,s}}'_{t,s}=\norm{a'_{t,s}}'_{t,s}\le 2\sum_{i=1}^d |a'_{t,s,i}|\norm{dv_i}'_{t,s}=O(|v|^{\frac{1}{2}}),
		\]
		as $|v|\to\infty$. Denote by 		
		$\rho:\C^d\to [0,\infty)$ the Riemannian distance function (from the origin) with respect to the metric $g'_{t,s}$. By \cite{GG}, the auxiliary complete metric $g$ defined by 
		\[
		g=\cfrac{g'_{t,s}}{L^2(\rho'(v))}
		\]
		is complete. $V'_{t,s}$ is uniformly bounded with respect to $g$. Moreover, the time-$1$ flow $\phi_{inv,s}$ generated by $V_{t,s}'$ intertwines $\omega_{sm,s}$ and $\omega_{inv,s}$, as desired.
	\end{proof}

    Denote by $\mathbb{T}$ the \textit{tropical semi-field} $\mathbb{T}=\R\cup\{-\infty\}$. Let $c=(c_1,\ldots,c_d)\in\C^d$ be a point away from the hyperplanes in $\mathcal{H}_{\C}$ previously chosen to be the center of the $T^d$-action.  Recall that the $T^d$-orbits (where are regular fibers of $(|v_1-c_1|,\ldots,|v_d-c_d|)$) are Lagrangian with respect to $\omega_{inv,s}$.
    
    \begin{definition}
    \label{def:lagfib_red}
	Let $\mathrm{Log}_t:\C^d\to\mathbb{T}^d$ be the map defined by 
	\[
	\mathrm{Log}_t(v_1,\ldots,v_d)=\left(\log_{t}|v_1-c_1|,\ldots,\log_{t}|v_d-c_d|\right),
	\]
	where $t\gg0$ is a constant. Denote by $\pi_{s}:X_{s}\to \mathbb{T}^d$ the composition $\pi_{s}=\mathrm{Log}_t\circ\phi_{s}$. $\pi_{s}$ is our preferred Lagrangian torus fibration on $X_{s}$.
	\end{definition}
    
	\subsection{Lagrangian torus fibrations on hypertoric varieties and the discriminant loci} 
	
	    \begin{definition}
	    \label{def:lagfib}
		We denote by $\pi:\mathfrak{M}_{u,\lambda}\to B=\R^d\times\mathbb{T}^d$ the map which sends a point $x\in \bar{\mu}_{\R}^{-1}(\frac{s}{2})$ to $\pi(x)=\left(s,\pi_{s}\left([x]\right)\right)$, where $[x]\in X_{s}$ is the $T^d$-orbit of $x$. $\pi$ is a piecewise smooth Lagrangian torus fibration.
		\end{definition}
	
	Let $b=(s,\tau)=(s_1,\ldots,s_d,\tau_1,\ldots,\tau_d)\in B$. For generic values $b$, the fiber $\pi^{-1}(b)\cong T^{2d}$ is a smooth Lagrangian torus. When exactly $k$ components of $\tau$ is $-\infty$, the fiber $\pi^{-1}(b)$ degenerates to a torus $T^{2d-k}$. If $s\in H_{\R,i}$ and $\tau\in\pi_{s}\left(H_{\C,i}\right)$, the fiber $\pi^{-1}(b)$ is a \textit{pinched torus} (i.e. a product of immersed $\bS^2$ and tori) of dimension $2d$. We denote by $\Sigma\subset B$ the set of all points over which the fibers of $\pi$ are singular:
	\[
	\Sigma=\partial B\cup\left(\bigcup_{i=1}^n \{(s,\tau)\in B|s\in H_{\R,i}\text{ and }\tau\in\pi_{s}\left(H_{\C,i}\right)\}\right).
	\] 
	We will call $\Sigma$ the \textit{discriminant loci} of $\pi$ (e.g. Fig \ref{fig:Lag-fib-HT}). Let $B^0=B\setminus\Sigma$. $\pi$ restricts to a $T^{2d}$-bundle over $B^0$, and induces an integral affine structure on $B^0$.
	
	\begin{figure}[h]
		\begin{center}
			\includegraphics[scale=0.5]{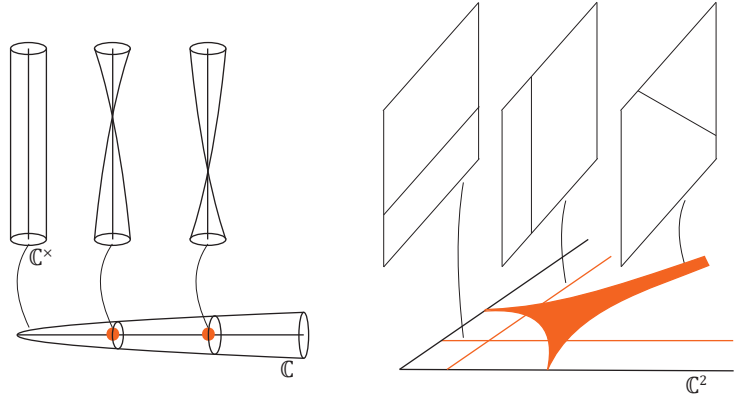}
			\caption{Lagrangian fibrations on $T^*\C\bP^1$ and $T^*\C\bP^2$, where the base are $\R\times\mathbb{T}$ and $\R^2\times\mathbb{T}^2$, respectively. The complex hyperplanes are taken to be in general positions.}
			\label{fig:Lag-fib-HT}
		\end{center}
	\end{figure}
	
	\section{SYZ mirror construction for hypertoric varieties}
	\label{SYZ}
	In this section, we carry out the SYZ mirror construction for smooth hypertoric varieties. We begin by reviewing the SYZ construction.
	
	\subsection{The SYZ mirror construction}
	\label{sec:SYZ}
	Let $\pi: X \to B$ be a proper Lagrangian torus fibration of a compact K\"ahler manifold $(X,\omega)$ of dimension $d$  
	such that the base $B$ is a compact manifold with corners, and the preimage of each codimension-one facet of $B$ is a smooth irreducible divisor denoted by $D_i$ for $1\le i \le m$.
	
	We assume that the regular Lagrangian fibers of $\pi$ are special with respect to a nowhere-vanishing meromorphic volume form $\Omega$ on $X$ whose pole divisor is the boundary divisor $D:=\sum_{i=1}^m D_i$ (and hence $D$ is an anti-canonical divisor).  We denote by $B^0 \subset B$ the complement of the discriminant locus of $\pi$, and we assume that $B^0$ is connected\footnote{When the discriminant locus has codimension-two, $B^0$ is automatically connected.  Although the Lagrangian fibrations on hypertoric varieties that we constructed have codimension-one discriminant loci, $B^0$ is still connected.}.  We denote by $L_b$ a fiber of $\pi$ over $b \in B^0$.
	
	\begin{lemma}[Maslov index of disc classes {\cite[Lemma 3.1]{Auroux07}}] \label{MaslovIndex} 
		For a disc class $\beta \in \pi_2(X,L_b)$ where $b \in B^0$, the Maslov index of $\beta$ is $\mu(\beta)=2[D]\cdot \beta$. 
	\end{lemma}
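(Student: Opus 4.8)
The plan is to compute the Maslov index of a disc class $\beta \in \pi_2(X, L_b)$ by relating it to the intersection number of $\beta$ with the anti-canonical divisor $D$, following the standard argument of Auroux. First I would recall that the Maslov index of a disc $u: (D^2, \partial D^2) \to (X, L_b)$ is defined via the trivialization of $u^* TX$ and the loop of Lagrangian planes $u^*(TL_b)$ along the boundary; equivalently, after choosing a nowhere-vanishing section of the canonical bundle $K_X$ over the image of $u$, the Maslov index measures twice the winding number of the induced phase function along $\partial D^2$.

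The key point is that the meromorphic volume form $\Omega$ trivializes $K_X$ away from its pole divisor $D$. Since $b \in B^0$, the fiber $L_b$ is disjoint from $D$, so $\Omega$ restricts to a nowhere-vanishing holomorphic section of $K_X$ in a neighborhood of $L_b$. Because the regular fibers are special Lagrangian with respect to $\Omega$, the restriction of $\Omega$ to $L_b$ has constant phase, so the phase function $\arg(\Omega|_{L_b})$ is constant — in particular the winding number of the phase along $\partial D^2 = u(\partial D^2)$, computed using the trivialization of $K_X$ coming from $\Omega$, would be zero if $\Omega$ were nowhere-vanishing on all of $u(D^2)$. The discrepancy is exactly accounted for by the zeros and poles of $\Omega$ that $u(D^2)$ meets: since $\Omega$ has no zeros and its pole divisor is $D$, a meromorphic section of $K_X$ with a pole of order one along each $D_i$ contributes a winding of $2\pi$ per intersection point (counted with sign and multiplicity) of $u$ with $D$. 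Hence the total winding number of the phase is $[D] \cdot \beta$, and therefore $\mu(\beta) = 2[D]\cdot\beta$.

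Concretely I would carry this out as follows: (1) reduce to the local model near $D$, where in suitable holomorphic coordinates $D_i = \{z_i = 0\}$ and $\Omega = \frac{dz_1 \wedge \cdots \wedge dz_d}{z_1 \cdots z_k}$ (so $\Omega$ has simple poles along the $D_i$'s met by the disc); (2) use the homotopy invariance of the Maslov index to reduce to computing it for a product of discs, one in each coordinate meeting $D$ transversally at the origin and trivial in the other coordinates; (3) for such a standard disc in $\C$ winding once around the origin, with $\Omega$-factor $dz/z$, verify directly that the Maslov index is $2$, matching $2 \cdot ([D]\cdot\beta) = 2 \cdot 1$; (4) assemble by additivity of both the Maslov index and the intersection number under the connect-sum/product decomposition of disc classes.

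The main obstacle — or rather, the point requiring care — is justifying step (2): that an arbitrary disc class can be deformed, through discs with boundary on $L_b$, to one transverse to $D$ and in "product form" near each intersection point, so that the local computation governs the global index. This is handled exactly as in \cite[Lemma 3.1]{Auroux07}: the Maslov index depends only on the homotopy class $\beta$ and on the relative first Chern class $c_1(X, L_b)$ paired with $\beta$, and $c_1(X)$ is Poincaré dual to $[D]$ since $D$ is anti-canonical (witnessed by $\Omega$ having pole divisor exactly $D$); the special Lagrangian condition ensures the contribution from the Lagrangian boundary loop vanishes, leaving only $[D]\cdot\beta$. In our hypertoric setting the only new feature is that the discriminant locus of $\pi$ has codimension one rather than two, but since we work with $b \in B^0$ and disc classes in $\pi_2(X, L_b)$ for a genuinely regular fiber $L_b$, this plays no role in the Maslov index computation, and Auroux's argument applies verbatim.
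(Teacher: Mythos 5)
Your proposal is correct and takes essentially the same route as the paper: the lemma is simply quoted from Auroux's Lemma 3.1, whose proof is exactly the argument you give (the pole divisor $D$ of $\Omega$ trivializes $K_X$ on the complement, the special/constant-phase condition kills the boundary winding, and each transverse intersection with $D$ contributes $2$ to the index via the local model $dz/z$). The same reasoning, via liftability of $\arg(\Omega|_{L_b})$ away from the pole divisor, is what the paper uses for its hypertoric analogue, Lemma \ref{lemma:maslov}, so there is nothing to add.
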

	
	
	\begin{definition}[Wall \cite{CLL}] \label{def:wall}
		The \textit{wall} $\bm{W}$ of a Lagrangian fibration $\pi:X\to B$ is the set of points $b \in B^0$ 
		such that the fiber $L_b$ bounds nonconstant holomorphic discs with Maslov index $0$.  
	\end{definition}
	
	The complement of $\bm{W}\subset B^0$ consists of several connected components, which we call \textit{chambers}. Over different chambers the Lagrangian fibers behave differently in a Floer-theoretic sense. 
	Away from the wall $\bm{W}$, the fibers are \textit{weakly unobstructed} and the one-pointed open Gromov--Witten invariants are well-defined using the machinery of Fukaya--Oh--Ohta--Ono \cite{FOOO}. 
	
	\begin{definition}[Open Gromov--Witten invariants {\cite{FOOO}}] \label{def:oGW}
		For $b \in B^0 \setminus\bm{W}$ and $\beta \in \pi_2(X,L_b)$, 
		let  $\mathcal{M}_1(\beta;L_b)$ be the moduli space of stable discs with one boundary marked point of class $\beta$, and $[\mathcal{M}_1(\beta;L_b)]^{\mathrm{vir}}$ be the virtual fundamental class of $\mathcal{M}_1(\beta;L_b)$.
		The \textit{open Gromov--Witten invariant} associated to $\beta$ is $n_{\beta}:=\int_{[\mathcal{M}_1(\beta;L_b)]^{\mathrm{vir}}}\mathrm{ev}^*[\mathrm{pt}]^{\mathrm{PD}}$, 
		where $\mathrm{ev}:\mathcal{M}_1(\beta;L_b) \to L_b$ is the evaluation map at the boundary marked point and $[\mathrm{pt}]^{\mathrm{PD}}$ is the Poincar\'e dual of the point class of $L_b$.
	\end{definition}
	We will restrict to disc classes which are transversal to the boundary divisor $D$ when we construct the mirror space (while for the mirror superpotential we need to consider all disc classes).
	
	\begin{definition}[Transversal disc class] \label{def:transversal}
		A disc class $\beta \in \pi_2(X,L_b)$ for $b \in B^0$ is said to be transversal to the boundary divisor $D$, which is denoted as $\beta \pitchfork D$, if it is represented by a map $u$ with $\mathrm{Im}(u) \cap D$ being a finite set of points and the intersections are transversal
	\end{definition}
	
	Due to dimension reason, the open Gromov--Witten invariant $n_\beta$ is nonzero only when the Maslov index $\mu(\beta)=2$. 
	When $\beta$ is transversal to $D$ or when $X$ is semi-Fano, namely $c_1(\alpha) = [D] \cdot \alpha \geq 0$ for all holomorphic sphere classes $\alpha$, the number $n_\beta$ is invariant under small deformation of complex structure and under Lagrangian isotopy in which all Lagrangian submanifolds in the isotopy do not intersect $D$ nor bound nonconstant holomorphic disc of Maslov index less than $2$. 
	
	
	The SYZ mirror construction can be realized as follows \cite{CLL}.  First, the semi-flat mirror $X^\vee_0$  is defined as the space of pairs $(L_b,\nabla)$ 
		where $b \in B^0$ and $\nabla$ is a flat $\U(1)$-connection on the trivial complex line bundle over $L_b$ up to gauge. 
		There is a natural map $\pi^\vee:X^\vee_0\to B^0$ given by forgetting the second coordinate. 
		The semi-flat mirror  $X^\vee_0$ has a canonical complex structure \cite{Leung} 
		and the functions $\mathrm{e}^{-\int_{\beta}\omega}\mathrm{Hol}_{\nabla}(\partial \beta)$ on $X^\vee_0$ for disc classes $\beta \in \pi_2(X,L_b)$ are called semi-flat complex coordinates. 
		Here $\mathrm{Hol}_{\nabla} (\partial \beta)$ denotes the holonomy of the flat $\U(1)$-connection $\nabla$ along $\partial \beta \in \pi_1(L_b)$. 
		
	Then the generating functions of transversal open Gromov--Witten invariants are defined by 
		\begin{equation}
		\mathcal{I}_i(L_b,\nabla) := \sum_{\substack{\beta \in \pi_2(X,L_b) \\ \beta \cdot D_i = 1, \beta\pitchfork D}} n_\beta \exp\left(-\int_{\beta}\omega\right)\mathrm{Hol}_{\nabla}(\partial \beta), 
		\label{eq:gen}
		\end{equation}
		for $1 \le i \le m$, $(L_b, \nabla) \in (\pi^\vee)^{-1}(B^0\setminus \bm{W})$.  They serve as quantum corrected complex coordinates. 
		The function $\mathcal{I}_i$ can be written in terms of the semi-flat complex coordinates, and hence they generate a subring $\C[\mathcal{I}_1, \ldots, \mathcal{I}_m]$ in the coordinate ring\footnote{In general we need to use the Novikov ring instead of $\C$ since $\mathcal{I}_i$ could be a formal Laurent series.  In the cases that we study later, $\mathcal{I}_i$ are Laurent polynomials whose coefficients are convergent, and hence the Novikov ring is not necessary.} of $(\pi^\vee)^{-1}(B^0\setminus \bm{W})$.
	\begin{definition} \label{def:SYZ}
		An SYZ mirror of $X$ is the pair $(X^\vee,W)$ where 
		$X^\vee:=\mathrm{Spec} \left(\C[\mathcal{I}_1,\ldots, \mathcal{I}_m] \right)$ and 
		$$W := \sum_{\substack{\beta \in \pi_2(X,L_b)}} n_\beta \exp\left(-\int_{\beta}\omega\right)\mathrm{Hol}_{\nabla}(\partial \beta). $$
		
		Moreover, $X^\vee$ is called to be an SYZ mirror of $X-D$.  	
	\end{definition}
	
	\begin{remark} \label{rmk:defect}
		In general the mirror space $X^\vee$ defined in this way, which only uses the generating functions of stable discs emanated from boundary divisors, is always affine and can be singular.  The reason is that our construction ignores the local holomorphic functions living on the intermediate chambers in the base and only take the coordinate functions into account.
		
		Indeed for most hypertoric varieties this is the case. A resolution is necessary, and this will be carried out in Section \ref{sec:mirror}. The derived category is expected to be independent of the choice of a resolution. On the other hand, the Lagrangian fibration $\pi$ on $X$ indeed canonically fixes the resolution if we look more closely into Lagrangian Floer theory of the immersed fibers and glue in their formal deformation spaces.  In this paper we will perform the resolution by assuming some combinatorial rules resulting from Lagrangian Floer theory.			
	\end{remark}
	
	\begin{remark}
	Note that $W$ is a sum over all disc classes which are not necessarily transversal.  If $X$ is semi-Fano, then every stable holomorphic disc class of Maslov index $2$ is of the form $\beta+\alpha$ where $\beta$ is transversal with $\mu(\beta)=2$, and $\alpha \in H_2(X)$ with $c_1(\alpha)=0$.  Hence it takes the form $W=\sum_{i=1}^m a_i \mathcal{I}_i$ where $a_i$ are certain series in K\"ahler parameters.  If $X$ is not semi-Fano, then some algebraic manipulation is necessary to write $W$ as a series in $\mathcal{I}_i$ over the Novikov ring.  In this paper we deal with $X-D$ and hence do not concern about $W$.
	\end{remark}
	
			 \subsection{Maslov index $0$ holomorphic discs and walls} \label{sec:wall}
	Let $c=(c_1,\ldots,c_d)\in (\mathfrak{t}_{\C}^d)^*$ be as in Definition \ref{def:lagfib_red}. Denote by $D^-_{i}$ the divisor
	\begin{equation}
	\label{D-}
D^-_{i}=\{[z,w]\in\mathfrak{M}_{u,\lambda} |z_iw_i=c_i\},
	\end{equation}
	  and set $D^-=\sum_{i=1}^d D^-_{i}$. We will assume the isotopies $\phi_{s}$ in Lemma \ref{lemma:moser} preserves $D^-$. This can be achieved by modifying $\phi_{s}$ using the construction in {\cite[Lemma B.2]{AAK}.
		
		\begin{lemma}[Maslov index formula]
			\label{lemma:maslov}
			Let $L_b=\pi^{-1}(b)$ be the fiber of $\pi:\mathfrak{M}_{u,\lambda}\to B$ over $b\in B^0$. For any disc class $\beta\in \pi_2(\mathfrak{M}_{u,\lambda}, L_b)$, the Maslov index $\mu(\beta)$ is twice the algebraic intersection number $\beta\cdot [D^-]$.
		\end{lemma}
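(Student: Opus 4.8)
The plan is to reduce the statement to the general Maslov index formula of Lemma \ref{MaslovIndex} by exhibiting $D^-$ as (the relevant part of) a pole divisor of a holomorphic volume form for which the regular fibers $L_b$ are special Lagrangian. First I would write down an explicit meromorphic volume form $\Omega$ on $\mathfrak{M}_{u,\lambda}$. Away from the discriminant loci the fibration is (after the homeomorphisms $\phi_s$) modeled on the reduced spaces $X_s\cong\C^d$ with complex coordinates $v_1,\ldots,v_d$, together with the real directions $s_1,\ldots,s_d$; a natural candidate is
\[
\Omega = \bigwedge_{i=1}^{d}\frac{d v_i}{v_i - c_i},
\]
pulled back via $\bar\mu_{\C}$ (using the identification $\bar\mu_\C = (z_1 w_1,\ldots,z_d w_d)$ from Section \ref{sec:fib}) and lifted to $\mathfrak{M}_{u,\lambda}$; its pole divisor is exactly $D^- = \sum_{i=1}^d D^-_i$ where $D^-_i = \{v_i = c_i\}=\{z_iw_i=c_i\}$. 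One then checks that $\Omega$ is nowhere vanishing and holomorphic off $D^-$, which is immediate from the formula and the fact that $\bar\mu_\C$ is a holomorphic $(\C^\times)^d$-fibration (Remark \ref{rmk:holfib}).

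Second I would verify that the Lagrangian fibers $L_b$ for $b\in B^0$ are special with respect to this $\Omega$, i.e.\ that $\mathrm{Im}(\Omega|_{L_b})$ has constant phase. This is where the construction of $\pi$ really enters: over $B^0$ the fiber $L_b$ is a product of the $T^d$-orbit $\{|v_i - c_i| = \text{const}\}$ in $X_s$ with another real torus in the $s$-directions, and on such an orbit $\mathrm{arg}(v_i - c_i)$ varies over the full circle while $|v_i - c_i|$ is fixed, so $\frac{dv_i}{v_i-c_i}$ restricts to (a real multiple of) $\sqrt{-1}\,d\theta_i$; since the $s$-directions contribute real forms, $\Omega|_{L_b}$ has constant argument. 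Here I would need to be slightly careful because $\phi_s$ is only a homeomorphism across the walls $H_{\R,i}$, but since the Maslov index is a homotopy invariant and $B^0$ is connected (and the $\phi_s$ are diffeomorphisms on $B^0$ by Lemma \ref{lemma:moser}), it suffices to compute on the locus where everything is smooth.

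With $\Omega$ and the special Lagrangian property in hand, Lemma \ref{MaslovIndex} gives $\mu(\beta) = 2[D^-]\cdot\beta$ directly, where $[D^-]$ is the pole divisor of $\Omega$ and $\beta\cdot[D^-]$ is the algebraic intersection number; this is exactly the claimed formula. I expect the main obstacle to be the second step: justifying the special Lagrangian condition uniformly across chambers despite the reduced K\"ahler forms $\omega_{red,s}$ being singular along the $H_{\C,i}$ and the homeomorphisms $\phi_s$ failing to be smooth there, and checking that the modification of $\phi_s$ (via \cite[Lemma B.2]{AAK}) preserving $D^-$ does not disturb the phase computation. An alternative, more hands-on route that avoids some of this — and which I would fall back on if the volume-form bookkeeping gets delicate — is to compute $\mu(\beta)$ directly for a spanning set of disc classes: the basic Maslov-$2$ discs coming from the coordinate hyperplanes $H_{\C,i}$ (which meet $D^-$ once) and the boundary-stratum disc classes, together with holomorphic sphere classes (the $\beta_S$ from Section \ref{sec:circuits}, which have $\mu = 0$ and should have zero intersection with $D^-$), then invoke additivity of both $\mu$ and intersection number under gluing to conclude on all of $\pi_2(\mathfrak{M}_{u,\lambda},L_b)$.
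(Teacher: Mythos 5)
There is a genuine gap at the very first step: your candidate $\Omega=\bigwedge_{i=1}^{d}\frac{dv_i}{v_i-c_i}$, pulled back via $\bar{\mu}_{\C}$, is a $d$-form, while $\mathfrak{M}_{u,\lambda}$ has complex dimension $2d$. It is therefore not a holomorphic volume form, so Lemma \ref{MaslovIndex} does not apply to it, and the phase of its restriction to the $2d$-dimensional torus $L_b$ does not compute the Maslov class (the Maslov class is read off from the determinant-type phase of a top-degree form on the Lagrangian tangent spaces, not from a form pulled back from the base). This is not a cosmetic issue: the whole content of the lemma is that the fiber-direction discs --- the Maslov index $0$ discs lying in fibers $\bar{\mu}_{\C}^{-1}(v_0)\cong(\C\cup_0\C)^{|I'|}\times(\C^\times)^{d-|I'|}$, which do meet the divisors $\mathcal{Z}_i=\{z_i=0\}$, $\mathcal{W}_i=\{w_i=0\}$ --- contribute nothing to $\mu(\beta)$, and a form that only sees the base cannot detect or rule out those contributions. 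The paper instead takes the top-degree form
\[
\Omega=\cfrac{\bigwedge_{i=1}^{d} dz_i\wedge dw_i}{\prod_{i=1}^{d}\left(z_iw_i-c_i\right)},
\]
which on the open set where the fiber coordinates are nonzero factors as $\pm\bigwedge_i \frac{dv_i}{v_i-c_i}\wedge\frac{dz_i}{z_i}$; crucially it is regular and nonvanishing along $\{z_i=0\}$ and $\{w_i=0\}$ and has pole divisor exactly $D^-$, which is precisely why only intersections with $D^-$ count.

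Once the correct $\Omega$ is in place, the rest of your plan is essentially the paper's argument, with one adjustment you partially anticipate: the fibers $L_b$ themselves are not shown to be special; rather one uses the family $(\phi_{s,t})$ from Lemma \ref{lemma:moser} (lifted fiberwise-constantly to $\bar{\mu}_{\R}^{-1}(\frac{s}{2})$ and extended to homeomorphisms $\Phi_{b,t}$ of $\mathfrak{M}_{u,\lambda}$, after first isotoping to a nearby smooth level if $s\in H_{\R,i}$) to carry $L_b$ to a standard product torus, on which $\Omega$ restricts with constant phase; homotopy invariance then gives vanishing of the Maslov class of $L_b$ in $\mathfrak{M}_{u,\lambda}\setminus D^-$ and hence $\mu(\beta)=2\,\beta\cdot[D^-]$ by \cite[Lemma 3.1]{Auroux07}. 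Your fallback route (computing $\mu$ on generators and using additivity) does not sidestep the problem: establishing $\mu(\alpha_i)=0$ for the fiber disc classes and $\mu(\beta_S)=0$ for the sphere classes is exactly the content you would need the correct volume form (or an equivalent first-Chern-class computation relative to $D^-$) to justify.
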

		
		\begin{proof}
			Let $\Omega$ be the meromorphic volume form on $\mathfrak{M}_{u,\lambda}$ with pole divisor $D^-$ defined by 
			\[
			\Omega=\cfrac{\bigwedge_{i=1}^d dz_i\wedge dw_i}{\prod_{i=1}^d z_iw_i-c_i}.
			\]
		Let $b=(s,\tau)$. If $s\notin H_{\R,i}$ for all $i$, the $T^n/K$-action on the level set $\bar{\mu}_{\R}^{-1}\left(\frac{s}{2}\right)$ containing $L_b$ is free, and hence $\bar{\mu}_{\R}^{-1}\left(\frac{s}{2}\right)$ is a trivial $T^d$-bundle over $\C^d$. From Lemma \ref{lemma:moser}, we have a one parameter family $(\phi_{s,t})_{t\in[0,1+\kappa]}$ of homeomorphisms taking the projection $\bar{\mu}_{\C}(L_b)\subset\C^d$ of $L_b$ to a standard product torus centered at the point $c$. We can lift $(\phi_{s,t})_{t\in[0,1+\kappa]}$ to $\bar{\mu}_{\R}^{-1}(\frac{s}{2})$ by defining it to be fiber-wise constant and extend it to a one parameter family of homeomorphisms of $(\Phi_{b,t})_{t\in[0,1+\kappa]}$ of $\mathfrak{M}_{u,\lambda}$. If $s\in H_{\R,i}$, we can isotope $L_b$ to a nearby smooth fiber $L_{b'}$ contained in a level set $\bar{\mu}_{\R}^{-1}(\frac{s'}{2})$ with $s'\notin H_{\R,i}$ for all $i$, and then define $(\Phi_{b,t})_{t\in[0,1+\kappa]}$ by pre-composing $(\Phi_{b',t})_{t\in[0,1+\kappa]}$ with this isotopy.
The phase function $\arg(\Omega|_{\Phi_{b,1+\kappa}(L_b)}):\Phi_{b,1+\kappa}(L_b)\to \bS^1$ is identically zero since $\Phi_{b,1+\kappa}(L_b)$ is a special Lagrangian in $\mathfrak{M}_{u,\lambda}\setminus D^-$. This means the map $\arg(\Omega|_{L_b})_*:\pi_1(L_b)\to \pi_1(\bS^1)=\Z$ induced by $\arg(\Omega|_{L_b}):L_b\to \bS^1$ is trivial, and hence the Maslov class of $L_b$ vanishes in $\mathfrak{M}_{u,\lambda}\setminus D^{-}$, i.e. $\arg(\Omega|_{L_b})$ lifts to a real-valued function. It is then a well known fact (see \cite[Lemma 3.1]{Auroux07} and \cite{AAK}) that $\mu(\beta)=2\beta\cdot D^{-}$.
		\end{proof}
		
		\begin{prop}
			\label{prop:walls}
			The set of points $b\in B^0$ such that the fiber $L_b$ bound nontrivial holomorphic discs of Maslov index $0$ is the union $\bigcup_{i=1}^n W_i$, where $W_i$ is defined by  
			\[
			W_i=\{(s,\tau)\in B^0|\tau\in \pi_{s}(H_{\C,i})\}.
			\]
		\end{prop}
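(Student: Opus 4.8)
The plan is to identify, for each Lagrangian fiber $L_b$ over $b = (s,\tau) \in B^0$, exactly when it can bound a nonconstant holomorphic disc of Maslov index $0$, using the Maslov index formula of Lemma \ref{lemma:maslov} together with the structure of the holomorphic symplectic fibration $\bar\mu_{\C}$. By Lemma \ref{lemma:maslov} a disc class $\beta$ has $\mu(\beta) = 2\,\beta \cdot [D^-]$, so $\mu(\beta) = 0$ forces $\beta \cdot [D^-] = 0$; but a nonconstant holomorphic disc with boundary on $L_b$ must have positive area, and since $D^-$ is (part of) the pole divisor of the meromorphic volume form $\Omega$ — hence effective and ``positive'' relative to $\omega$ in the relevant sense — the only way to have $\beta \cdot D^- = 0$ with $\beta$ holomorphic and nonconstant is for the disc to be entirely disjoint from $D^-$. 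Such a disc must therefore live inside the fiber of some map that ``sees'' $D^-$; concretely, I expect the disc to be contained in a fiber of $\bar\mu_{\C}$ (or of the projection $(z_1 w_1,\ldots,z_d w_d)$), because $D^-_i = \{z_i w_i = c_i\}$ is a level set of the $i$-th component.

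First I would reduce to the reduced space. Fix $b = (s,\tau)$ with $s \notin \bigcup_i H_{\R,i}$ (the generic case); then $\bar\mu_{\R}^{-1}(s/2)$ is a trivial $T^d$-bundle over $X_s \cong \C^d$, and a holomorphic disc with boundary on $L_b$ projects to a holomorphic disc in $X_s$ with boundary on the product torus $\phi_s(L_b) = \mathrm{Log}_t^{-1}(\tau)$, which is centered at $c$ in the coordinates $(v_1 - c_1,\ldots,v_d - c_d)$. Next I would analyze Maslov-$0$ discs in $X_s$: the relevant anticanonical-type divisor in $X_s$ is $\bigcup_{i=1}^d \{v_i = c_i\}$ (together with the toric boundary at $v_i \to \infty$ coming from the $\mathbb{T}^d$-compactification), and a nonconstant holomorphic disc bounded by the product torus either hits one of these divisors (Maslov $\geq 2$) or is constant — unless the torus itself is degenerate. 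The torus $\mathrm{Log}_t^{-1}(\tau)$ in the $i$-th factor is a circle $|v_i - c_i| = t^{\tau_i}$; this circle bounds a disc of Maslov $0$ precisely when the geometry forces an extra holomorphic disc to appear, which by the wall-crossing philosophy of \cite{AAK,CLL} happens exactly when the fiber's image touches the singular locus of the reduced complex structure — i.e. when a component $v_\ell$ for $\ell = d+1,\ldots,n$ can degenerate. Unwinding: $v_\ell = \sum_i a_{\ell i} v_i + b_\ell$ vanishes (equivalently the relevant holomorphic disc in the $z_\ell, w_\ell$ plane shrinks) exactly when $\bar\mu_{\C}(L_b)$ meets $H_{\C,\ell}$, which in the fibration picture is the condition $\tau \in \pi_s(H_{\C,\ell})$. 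This should account for the walls $W_\ell$ with $\ell > d$; the walls $W_i$ with $i \leq d$ arise similarly but from the divisors $D^-_i$ themselves, where the disc that would otherwise contribute a Maslov-$2$ class splits off a Maslov-$0$ bubble when the torus crosses $\pi_s(H_{\C,i})$.

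The cleaner way to organize all of this uniformly is: a holomorphic disc $u$ with $\mu(u) = 0$ must have image disjoint from $D^-$, hence all the functions $z_i w_i - c_i$ are nonvanishing holomorphic functions on the disc with no zeros; by the open mapping theorem combined with the boundary condition (the boundary maps into $L_b$, on which $|z_i w_i - c_i|$ is constant for $i \leq d$ and $|v_\ell|$-type quantities are constrained), each $z_i w_i - c_i$ must be constant on the disc for $i \le d$, so $u$ lies in a fiber of $\bar\mu_{\C}$. By Remark \ref{rmk:holfib}, the fiber $\bar\mu_{\C}^{-1}(v_0)$ is smooth $(\C^\times)^d$ — which bounds no nonconstant holomorphic discs at all — unless $v_0$ lies on some $H_{\C,i}$, in which case the fiber acquires nodal components $\C \cup_0 \C$, and these nodal lines are exactly what carry the Maslov-$0$ discs. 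Thus $L_b$ bounds a nonconstant Maslov-$0$ disc iff $\bar\mu_{\C}(L_b)$ meets some $H_{\C,i}$, iff $\tau \in \pi_s(H_{\C,i})$ for some $i$, which is precisely $b \in \bigcup_{i=1}^n W_i$. The main obstacle I anticipate is making the ``disc lies in a fiber of $\bar\mu_{\C}$'' step fully rigorous: one needs to rule out Maslov-$0$ discs that wander in the $s$-direction (when $s$ sits on some $H_{\R,i}$ and $L_b$ is a pinched torus, the argument above about trivial bundles breaks), and one must be careful that the homeomorphism $\phi_s$ from Lemma \ref{lemma:moser} is only a homeomorphism (not biholomorphism), so the disc count must be transported back via the Maslov index / symplectic area rather than directly; handling the non-simple case of $\mathcal{H}_{\R}$ and the compactification divisors at $v_i = \infty$ carefully is where the real work lies.
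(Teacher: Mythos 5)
Your final (``cleaner'') argument is essentially the paper's own proof: Maslov index $0$ together with Lemma \ref{lemma:maslov} and positivity of intersection forces the disc to avoid $D^-$, the projection $\bar{\mu}_{\C}\circ u$ is then constant by a maximum-principle argument, so the disc lies in a fiber $\bar{\mu}_{\C}^{-1}(v_0)$, and Remark \ref{rmk:holfib} shows such a fiber bounds nonconstant Maslov-$0$ discs exactly when $v_0$ lies on some $H_{\C,i}$, i.e. $\tau\in\pi_s(H_{\C,i})$, which is the paper's argument verbatim. The wall-crossing heuristics in your middle paragraph and the worries about $s\in H_{\R,i}$ are unnecessary (since $b\in B^0$ the fiber is smooth and $\bar{\mu}_{\C}$ is defined globally, so the projection argument applies uniformly), and your parenthetical claim that $|z_iw_i-c_i|$ is constant on $L_b$ is only literally true after applying the non-holomorphic $\phi_s$ --- but this is the same level of precision as the maximum-principle step in the paper's proof.
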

We will refer to $W_i$ as the walls of Lagrangian torus fibration $\pi:\mathfrak{M}_{u,\lambda}\to B$.
		
		\begin{proof}
			Let $L_b$ be the fiber of $\pi$ over $b=(s,\tau)\in B^0$. Then, $L_b$ is contained in the level set $\bar{\mu}_{\R}^{-1}(\frac{s}{2})$. Let $u:(D^2,\partial D^2)\to (\mathfrak{M}_{u,\lambda},L_b)$ be a holomorphic disc with boundary in $L_b$ representing a disc class $\beta\in\pi_2(\mathfrak{M}_{u,\lambda},L_b)$ with $\mu(\beta)=0$. Denote by $L_{red}$ the projection of $L_b$ to $\C^d$ via $\bar{\mu}_{\C}$. $L_{red}$ is a Lagrangian torus with respect to $\omega_{red,s}$, and its projection to the $i^{\mathrm{th}}$ component is a loop around $c_i$. The image of the holomorphic disc $\bar{\mu}_{\C}\circ u:(D^2,\partial D^2)\to (\C^d,L_{red})$ is contained in $\C^d\setminus\{c\}$ by Proposition \ref{lemma:maslov}. By maximal principle, $\bar{\mu}_{\C}\circ u$ is necessarily constant. This means the image of $u$ is contained in a fiber $\bar{\mu}_{\C}^{-1}(v_0)$ for some $v_0\in \C^d$.
			
			If $b\notin W_i$ for all $i$, then we have $v_0\notin H_{\C,i}$ for all $i$. In this case,  $\bar{\mu}_{\C}^{-1}(v_0)\cong (\C^{\times})^d$, while $\bar{\mu}_{\C}^{-1}(v_0)\medcap L_b=T^d$ is a product torus in $(\C^{\times})^d$ centered at the origin. Maximal principle then implies that $u$ is necessarily constant.
			
			 On the other hand, let $I\subset\{1,\ldots,n\}$ be the set of indices such that $b\in W_i$ and suppose $I\ne\emptyset$. Then, we can have $v_0\in H_{\C,i}$ for $i\in I'$ where $I'\subset I$ is a nonempty subset. In which case, $\bar{\mu}_{\C}^{-1}(v_0)\cong(\C\cup_0\C)^{|I'|}\times(\C^{\times})^{d-|I'|}$.  $\bar{\mu}_{\C}^{-1}(v_0)\medcap L_b=T^d$ is a product torus in $(\C\medcup_0\C)^{|I'|}\times(\C^{\times})^{d-{|I'|}}$ such that each $\C\medcup_0\C$ contains a $\bS^1$-component of $T^d$ in one of the irreducible components (depending on the signs of the corresponding components of $s$). It is then easy to see that $\bar{\mu}_{\C}^{-1}(v_0)\medcap L_b$ bounds exactly ${|I'|}$ nonconstant holomorphic discs (and all their multiple covers) of Maslov index $0$.
		\end{proof}
			
		\begin{remark} The construction of $(\phi_{s})_{s\in (\mathfrak{t}^d)^*}$ in Lemma \ref{lemma:moser} gives us an one-parameter family of homeomorphisms of $B^0$ taking each $W_i$ to $(\R^d\setminus H_{\R,i})\times\mathrm{Log}_t(H_{\C,i})$, where $\mathrm{Log}_t(H_{\C,i})$ is a amoeba that retracts to a tropical hyperplane in $\mathbb{T}^d$ as $t\to\infty$. Since we only need the wall and chamber structure on $B^0$ for the mirror construction, which is purely combinatorial, we will simply illustrate each $W_i$ as a tropical hyperplane in $\mathbb{T}^d$ (see Fig \ref{fig:chambers}). 	
		\end{remark}
		
		\subsection{Chambers and simply connected affine charts.} \label{sec:chambers}
		Let $H$ be a tropical hyperplane in $\mathbb{T}^d$ defined by the tropical polynomial $\max\{\tau_{i_1},\ldots,\tau_{i_m},a\}$. $H$ divides $\mathbb{T}^d$ into tropical chambers each of which a monomial of the defining equation attains maximum. We label the chamber where the constant $a$ attains maximum by $0$, and the chamber where the monomial $\tau_{i}\in\{\tau_{i_1},\ldots,\tau_{i_m}\}$ attains maximum by $i$. Using this convention, we can label the chambers given by a simple arrangement of tropical hyperplanes $\{H_i\}_{i=1}^n$ by $n$-tuples $\bm{h}=(h_1,\ldots,h_n)$, where $h_i\in\{0,\ldots,d\}$ indicates the position of the chamber relative to $H_i$.
		
		Let $\mathcal{H}=\{H_i\}_{i=1}^n$ be the arrangement of tropical hyperplanes $H_i$, where $H_i$ is the tropical limit of $\mathrm{Log}_t(H_{\C,i})$. We can choose $\lambda_{\C}$ such that for $\ell=d+1,\ldots,n$, $|b_{\ell}|$ (in the expression $v_{\ell}=\sum_{i=1}^d a_{{\ell,i}}v_i+b_{\ell})$ are distinct powers of $t$, making $\mathcal{H}$ simple (i.e. every subset of $k$ tropical hyperplanes with nonempty intersection intersects in codimension $k$). We will denote by $\mathcal{C}_{\bm{h}}$ both the tropical chambers and their preimages in $B^0$. This shall not cause any confusion. Notice that the wall and chamber structure on $B^0$ depend on the choice of $\lambda_{\C}$.

		\begin{figure}[h]
			\begin{center}
				\label{fig:chambers}
				\includegraphics[scale=0.75]{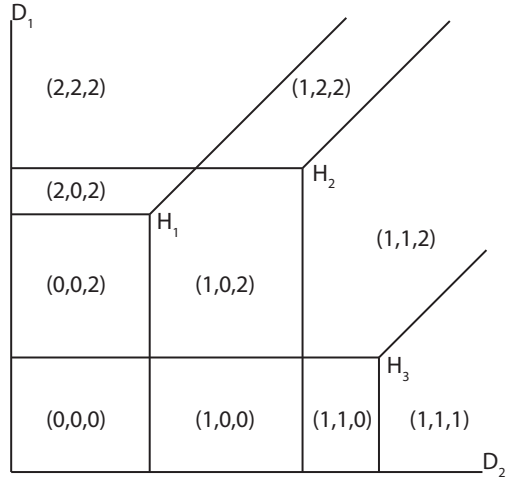}
				\caption{Tropical hyperplane arrangement and chambers}
			\end{center}
		\end{figure}
		
		Let $\sigma$ be a sign vector. We cover $B^0$ by simply connected affine charts $B^0_{\sigma}$ defined by
		\[
		B^0_{\sigma}=\{(s,\tau)\in B^0|s\in H_{\R,i}^{\sigma(i)} \text{ if } \tau\in \pi_s(H_{\C,i})\text{ and }s\in \R^d \text{ if } \tau\notin \pi_s(H_{\C,i})\text{ for all }i\}.
		\]
		
		\subsection{Effective disc classes of Maslov index $2$.} \label{sec:disc2}
		Let $b\in B^0_{\sigma}$ and assume $b$ is inside a chamber $\mathcal{C}_{\bm{h}}$. In particular, this means $b\notin W_i$ for all $i$. Let  $\beta^-_{1},\ldots,\beta^-_{d}\in\pi_2(\mathfrak{M}_{u,\lambda},L_{b})$ be disc classes given by primitive cycles $\gamma_{\sigma,1},\ldots,\gamma_{\sigma,d}\in H_1(L_b,\Z)$ such that $\gamma_{\sigma,i}$ vanishes in the singular fibers over $D^-_{i}$, and let  $\alpha_1,\ldots,\alpha_n\in\pi_2(\mathfrak{M}_{u,\lambda},L_{b})$ be disc classes given by primitive cycles $\gamma_{\sigma,d+1},\ldots,\gamma_{\sigma,d+n}\in H_1(L_b,\Z)$ such that $\gamma_{\sigma,d+i}$ vanishes in the fibers over the interior discriminant locus $\{(s,\tau)\in B|s\in H_{\R,i}\text{ and }\tau\in\pi_{s}\left(H_{\C,i}\right)\}$. When $b\in W_i$, $\alpha_i$ is the Maslov index $0$ disc class described in Proposition \ref{prop:walls}.
		
		We now classify the effective disc classes $\beta\in\pi_2(\mathfrak{M}_{u,\lambda},L_{b})$ of Maslov index $2$.
		
		\begin{prop}
			\label{prop:maslov2}
            The effective disc classes $\beta\in\pi_2(\mathfrak{M}_{u,\lambda},L_b)$ of Maslov index $2$ are of the following form:
			\begin{equation}
			\label{effdisc} \beta=\beta^-_{j}+\delta_1\alpha_{j_1}+\ldots+\delta_N\alpha_{j_N}, \quad j=1,\ldots,d,
			\end{equation}
			where $\delta_k\in\{0,1\}$, and $j_1,\ldots,j_N\in\{1,\ldots,n\}$ are the set of indices such that $h_{j_k}=j$. This means the projections of holomorphic discs of class $\beta$ in $B$ cross the walls $W_{j_1},\ldots,W_{j_N}$.
		\end{prop}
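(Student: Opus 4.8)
The plan is to reduce the classification to an analysis of holomorphic discs in the holomorphic $(\C^\times)^d$-fibration $\bar\mu_{\C}\colon \mathfrak{M}_{u,\lambda}\to(\mathfrak{t}^d_{\C})^*$, using the Maslov index formula (Lemma \ref{lemma:maslov}) and the structure of the fibers of $\bar\mu_{\C}$ from Remark \ref{rmk:holfib}. First I would fix $b=(s,\tau)\in\mathcal{C}_{\bm{h}}$ and a holomorphic disc $u\colon(D^2,\partial D^2)\to(\mathfrak{M}_{u,\lambda},L_b)$ of Maslov index $2$. By Lemma \ref{lemma:maslov}, $\beta\cdot[D^-]=1$, so the composition $\bar\mu_{\C}\circ u$ is a holomorphic map into $\C^d$ whose image meets exactly one of the hyperplanes $\{v_j=c_j\}$, transversally at one point, and avoids the others; comparing with the fact that the boundary $\partial(\bar\mu_{\C}\circ u)$ winds once around $c_j$ in the $j$-th factor and zero times in the others (since $L_{red}$ is a product torus around $c$ in the chosen chart), the maximum principle forces $\bar\mu_{\C}\circ u$ to be a degree-one cover of a disc in the $j$-th $\C$-factor bounded by the $j$-th circle and constant in the remaining factors. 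This pins down $j$ and shows $\beta-\beta^-_j$ is represented by a disc whose $\bar\mu_{\C}$-image is a point $v_0\in H_{\C,i}$ for the walls being crossed.

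Next I would analyze which walls can be crossed. The difference class $\beta-\beta^-_j$ is a combination of the interior-vanishing-cycle classes $\alpha_1,\dots,\alpha_n$ (this uses that $H_2(\mathfrak{M}_{u,\lambda};\Z)=\mathfrak{k}_\Z$ together with the computation of $\pi_2(\mathfrak{M}_{u,\lambda},L_b)$ via the long exact sequence and the explicit generators $\gamma_{\sigma,k}$), say $\beta-\beta^-_j=\sum_i \delta_i\alpha_i$. Since each $\alpha_i$ has Maslov index $0$ and the $\alpha_i$ are the classes of the Maslov-$0$ discs of Proposition \ref{prop:walls} which live in the singular fiber $\bar\mu_{\C}^{-1}(v_0)\cong(\C\cup_0\C)^{|I'|}\times(\C^\times)^{d-|I'|}$, a disc of class $\beta^-_j+\sum_i\delta_i\alpha_i$ can be glued only when the combinatorics are compatible: the $j$-th coordinate of the center $c$ must be separated from the origin of the relevant $\C\cup_0\C$ factor by exactly the walls $W_i$ with $h_i=j$. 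Concretely, I would argue that $\delta_i=1$ forces $h_i=j$ by tracking, in the chart $B^0_\sigma$, the position of $b$ relative to $W_i$: crossing $W_i$ changes which irreducible component of $\C\cup_0\C$ contains the vanishing $\bS^1$, and only the walls labelled $j$ contribute a disc attached to $\beta^-_j$. Conversely, for any subset of $\{j_1,\dots,j_N\}$ one exhibits an honest holomorphic disc by explicitly gluing $\beta^-_j$ with the corresponding Maslov-$0$ discs inside the appropriate degenerate fiber, so every class in (\ref{effdisc}) is effective.

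The main obstacle I anticipate is the converse (effectivity) direction and the bookkeeping that ties the chamber label $h_i$ to the sign data $\sigma$ and to which component of the nodal fiber carries the vanishing cycle. The forward direction — that an effective Maslov-$2$ class must have the stated form — is essentially the maximum-principle argument of Proposition \ref{prop:walls} applied to $\bar\mu_{\C}\circ u$ together with the Maslov index count, and should go through cleanly. The delicate point is to show that each $\delta_i$ is genuinely free in $\{0,1\}$ (no multiplicities, no forced inclusions), which requires the simplicity of the tropical arrangement $\mathcal{H}$ and a careful local model of the gluing near each wall; I would handle regularity of these glued discs by deferring to Section \ref{regularity}, and here only establish that the topological classes are exactly those listed. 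Identifying the indices $j_1,\dots,j_N$ with $\{\,i : h_i=j\,\}$ is then a direct translation between the chamber labelling convention of Section \ref{sec:chambers} and the requirement that the $j$-th factor of $L_{red}$ encircles $c_j$ after crossing precisely those walls.
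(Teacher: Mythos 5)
Your opening move --- projecting by $\bar{\mu}_{\C}$, using $\beta\cdot[D^-]=1$, positivity of intersection and the maximum principle to force the projection to be constant in every coordinate except the $j$-th, where it covers the region bounded by the loop around $c_j$ exactly once --- is the same first step as the paper's proof. The divergence, and the genuine gap, is in how you pin down the coefficients $\delta_i$ and prove effectivity. For $b$ in a chamber the classes $\alpha_i$ have no holomorphic representatives (they only do when $b\in W_i$, by Proposition \ref{prop:walls}), so the assertion that ``$\beta-\beta^-_j$ is represented by a disc whose $\bar{\mu}_{\C}$-image is a point'' does not make sense as written, and your existence argument --- ``explicitly gluing $\beta^-_j$ with the corresponding Maslov-$0$ discs inside the appropriate degenerate fiber'' --- would at best produce a nodal configuration attached to a fiber over a wall, not a holomorphic disc with boundary on $L_b$; converting it into one requires a gluing/deformation theorem that you do not supply (and which you yourself flag as the delicate point). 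Similarly, ``tracking which component of $\C\cup_0\C$ carries the vanishing cycle'' is a heuristic, not a proof that an honest Maslov-$2$ disc can only involve $\alpha_i$ with $h_i=j$, each with coefficient $0$ or $1$.

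The structural fact you are missing, which makes all gluing unnecessary, is the chart decomposition the paper uses: for each splitting $I^+\coprod I^-$ of $\bm{h}(j)=\{i\,:\,h_i=j\}$, the open set $U_{(I^+,I^-)}$ of (\ref{conic_bundle}) (where $z_i\ne 0$ for $i\notin I^-$ and $w_i\ne 0$ for $i\in I^-$) is a trivial $(\C^{\times})^d$-bundle over $\C^d$ containing $L_b$. A winding-number count for $v_i\circ u$ combined with positivity of intersection with $\mathcal{Z}_i=\{z_i=0\}$ and $\mathcal{W}_i=\{w_i=0\}$ shows that any Maslov-$2$ disc lies entirely in exactly one such chart; inside the chart the maximum principle makes the disc completely explicit (the $v_j$-component is the essentially unique disc through $c_j$, all fiber components are constant), so both effectivity of every listed class and the fact that each splitting contributes exactly one class $\beta_{(I^+,I^-)}$ are immediate, with no gluing. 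The identification $\beta_{(I^+,I^-)}=\beta^-_j+\sum_k\delta_k\alpha_{j_k}$ with $\delta_k\in\{0,1\}$ determined by comparing the splitting with the sign vector $\sigma$ is then a finite computation using the generators of $\pi_2(\mathfrak{M}_{u,\lambda},L_b)$ from the long exact sequence (which you do invoke) together with intersection numbers against $D^-_k$, $\mathcal{Z}_i$, $\mathcal{W}_i$. Unless you replace your gluing step by this chart/divisor-intersection argument (or an equivalent one), the proposal establishes neither that the exponents are constrained to $\{0,1\}$ with $h_{j_k}=j$, nor that all such classes are actually effective.
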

		
		\begin{proof}
		Let $u:(D^2,\partial D^2)\to (\mathfrak{M}_{u,\lambda},L_b)$ be a holomorphic disc of Maslov index $2$. For $i=1,\ldots,n$, denote by $\mathcal{Z}_i$ and $\mathcal{W}_i$ the the divisors
		\[
		 \mathcal{Z}_i=\{[z,w] \in\mathfrak{M}_{u,\lambda}| z_i=0\},
		\]
		and
		\[
		 \mathcal{W}_i=\{[z,w] \in\mathfrak{M}_{u,\lambda}| w_i=0\}.
		\]
		Let $\bm{h}(j)=\{j_1,\ldots,j_N\}$. By Proposition \ref{lemma:maslov} and positivity of intersection, $u$ intersects exactly one divisor $D^-_{j}$ with multiplicity $1$. Thus, $u$ cannot intersect both $\mathcal{Z}_i$ and $\mathcal{W}_i$ for $i\in\bm{h}(j)$ by a winding number argument. For a splitting $I^+\coprod I^-=\bm{h}(j)$ of $\bm{h}(j)$, we define an open subset $U_{(I^+,I^-)}\subset\mathfrak{M}_{u,\lambda}$ by
		\begin{equation}
		\label{conic_bundle}
		U_{(I^+,I^-)}=\{[z,w]\in\mathfrak{M}_{u,\lambda}|z_i\ne0 \text{ if } i\in \{1,\ldots,n\}\setminus I^-; w_i\ne 0\text{ if }  i\in I^-\}.
		\end{equation}
		We have $L_b\in U_{(I^+,I^-)}$ for all splittings $(I^+,I^-)$, and $u(D^2)\subset U_{(I^+,I^-)}$ for exactly one $(I^+,I^-)$ since $b\notin W_i$ for all $i$. Note that each $U_{(I^+,I^-)}$ is biholomorphic to the trivial $(\C^{\times})^d$-bundle over $\C^d$. Let $(v_1,\ldots,v_d,\nu_1,\ldots,\nu_d)$ be the complex coordinates on $U_{(I^+,I^-)}$ with $v_i=z_iw_i$ the base coordinates and $\nu_i$ the fiber coordinates. Assume $u(D^2)\subset U_{(I^+,I^-)}$ and write $u:(D^2,\partial D^2)\to (U_{(I^+,I^-)},L_b)$ as
		\[
		u(\zeta)=(v_1(\zeta),\ldots,v_d(\zeta),\nu_1(\zeta),\ldots,\nu_d(\zeta)).
		\]
        By maximal principle, only the $v_j$-component of $u$ is nonconstant. The $v_j$-component of $u$ is unique up to reparametrization. This means all holomorphic discs $u$ of Maslov index $2$ with $u(D^2)\subset U_{(I^+,I^-)}$ for a splitting $(I^+,I^-)$ represent the same disc class in $\pi_2(\mathfrak{M}_{u,\lambda},L_{b})$, which we denote by $\beta_{(I^+,I^-)}$. 
        
        For $i\in I$, set $\mathrm{sgn}(i)=+$ if $i\in I^+$ and $\mathrm{sgn}(i)=-$ if $i\in I^-$. We claim that $\beta_{(I^+,I^-)}=\beta^-_{j}+\delta_1\alpha_{j_1}+\ldots+\delta_N\alpha_{j_N}$, where $\delta_k=1$ if $\mathrm{sgn}(j_k)\ne \sigma(j_k)$, and $\delta_k=0$ if $\mathrm{sgn}(j_k)= \sigma(j_k)$. Since $\mathfrak{M}_{u,\lambda}$ is simply connected (see \cite[Theorem 6.7]{BD}), the following long exact sequence
        \[
        \cdots\to\pi_2(L_b)=0\to\pi_2(\mathfrak{M}_{u,\lambda})\cong H_2(\mathfrak{M}_{u,\lambda};\Z)=\mathfrak{k}_{\Z}\hookrightarrow\pi_2(\mathfrak{M}_{u,\lambda},L_b)\to\pi_1(L_b)\to\pi_1(\mathfrak{M}_{u,\lambda})=0\to\cdots
        \]
        shows that $\pi_2(\mathfrak{M}_{u,\lambda},L_b)$ is generated by the disc classes $\beta_1,\ldots,\beta_d,\alpha_1,\ldots,\alpha_n$. This combined with the intersection numbers of these generators with the divisors proves our claim.   
		\end{proof}		

\subsection{Regularity and open Gromov--Witten invariants}
\label{regularity}
		We now prove regularity of the disc classes in (\ref{effdisc}) and compute relevant open Gromov--Witten invariants necessary for the mirror construction. Our strategies of proofs are similar to that of Lemma 7 and Corollary 8 in \cite{Auroux15}.
		
		Let $u:(D^2,\partial D^2)\to (\mathfrak{M}_{u,\lambda},L_b)$ be a holomorphic disc. Denote by $(\mathcal{E},\mathcal{F})$ the sheaf of holomorphic sections of $E=u^*T\mathfrak{M}_{u,\lambda}$ with boundary values in $F=(u|_{\partial D^2})^*TL_b$. Denote by $\mathcal{A}^0(E,F)$ the sheaf of smooth sections of $E$ with boundary values in $F$, and $\mathcal{A}^{(0,1)}(E)$ the sheaf of smooth $E$-valued $(0,1)$-forms. 
		
		\begin{lemma}{\cite[Lemma 6.2]{CO}}
			The sequence
			\begin{equation}
			\label{ses4}
			0\longrightarrow (\mathcal{E},\mathcal{F})\longrightarrow \mathcal{A}^0(E,F)\overset{\bar{\partial}}\longrightarrow\mathcal{A}^{(0,1)}(E)\longrightarrow 0 
			\end{equation} 
			defines a fine resolution of $(\mathcal{E},\mathcal{F})$.
		\end{lemma}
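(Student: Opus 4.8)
The plan is to establish the two conditions in the definition of a fine resolution: that $\mathcal{A}^0(E,F)$ and $\mathcal{A}^{(0,1)}(E)$ are fine sheaves, and that the complex (\ref{ses4}) is exact at each of its three terms. Fineness is immediate: both are sheaves of modules over the sheaf of germs of smooth \emph{real-valued} functions on $D^2$ (multiplication by such a function preserves the totally real boundary condition cutting out $F$), and since $D^2$ is a smooth manifold with boundary this sheaf admits partitions of unity subordinate to any open cover; hence any module sheaf over it is fine.

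For exactness, injectivity of $(\mathcal{E},\mathcal{F})\hookrightarrow\mathcal{A}^0(E,F)$ is trivial. At $\mathcal{A}^0(E,F)$ one must identify $\Kernel(\dbar)$ with $(\mathcal{E},\mathcal{F})$: working in a local holomorphic trivialization of $E$, the equation $\dbar u=0$ becomes the Cauchy--Riemann system, and its smooth solutions are holomorphic by elliptic regularity; together with the boundary condition $u|_{\partial D^2}\in F$ this gives $\Kernel(\dbar)=(\mathcal{E},\mathcal{F})$. Composability of the two maps is then automatic, since holomorphic sections are $\dbar$-closed.

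The real content is exactness at $\mathcal{A}^{(0,1)}(E)$, i.e.\ local surjectivity of $\dbar$. Fix $p\in D^2$ and a germ $f$ of a smooth $E$-valued $(0,1)$-form. If $p$ lies in the interior, pick a holomorphic trivialization of $E$ near $p$; then $f$ is an $n$-tuple of scalar $(0,1)$-forms and the Cauchy--Pompeiu (Dolbeault--Grothendieck) integral on a small disc solves $\dbar u=f$ with no boundary constraint. If $p\in\partial D^2$, choose a holomorphic chart identifying a neighborhood of $p$ in $D^2$ with a half-disc $\{\,|z|<\varepsilon,\ \mathrm{Im}\,z\ge 0\,\}$, $p\mapsto 0$, together with a holomorphic frame of $E$ in which the totally real subbundle $F$ along $\{\mathrm{Im}\,z=0\}$ becomes the standard $\R^n\subset\C^n$ (after shrinking; one first extends $F$ to a totally real subbundle of $E$ over the half-disc and then trivializes, cf.\ \cite{CO}). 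Extend $f$ smoothly to the full disc, solve $\dbar u_0=f$ there by the interior case, and let $w(x):=u_0(x)$ for $x\in(-\varepsilon,\varepsilon)$, a smooth $\C^n$-valued function. Letting $G$ be the harmonic function on the half-disc with boundary values $\mathrm{Im}\,w$ on the diameter and $0$ on the arc, and $\widetilde{G}$ a harmonic conjugate (which exists since the half-disc is simply connected), the holomorphic function $h:=\widetilde{G}+\sqrt{-1}\,G$ satisfies $\mathrm{Im}\,h|_{\R}=\mathrm{Im}\,w$, and after shrinking away from the two corners it is smooth up to the diameter. Then $u:=u_0-h$ solves $\dbar u=f$ near $p$ and has $u|_{\R}=w-h|_{\R}\in\R^n$, i.e.\ boundary values in $F$. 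This proves local surjectivity, and hence exactness.

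I expect the only genuine obstacle to be the boundary case just sketched: producing the holomorphic frame that normalizes $F$ to the standard real form, and correcting the boundary value of an interior solution to land in $F$ (an elementary Riemann--Hilbert/Schwarz boundary value problem). These steps are classical---this is why the statement is attributed to Cho--Oh---so the remaining work is assembling them and checking smoothness up to the boundary near the corners of the half-disc.
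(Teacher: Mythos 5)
The paper itself offers no proof of this lemma---it is quoted verbatim from Cho--Oh \cite{CO}, so there is nothing internal to compare against; your write-up is essentially a reconstruction of the standard argument that Cho--Oh's statement rests on, and it is sound in outline. Fineness via modules over real-valued smooth functions with partitions of unity, identification of $\Kernel(\dbar)$ with $(\mathcal{E},\mathcal{F})$ by elliptic regularity, interior surjectivity by Cauchy--Pompeiu, and boundary surjectivity by solving $\dbar u_0=f$ without constraint and then subtracting a holomorphic $h$ with $\mathrm{Im}\,h=\mathrm{Im}\,u_0$ on the diameter (a Schwarz problem, smooth up to the open diameter after shrinking away from the corners) is exactly the right skeleton, and each step is classical.

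The one place where your justification as written does not yet deliver what you use is the normalization of $F$: extending $F$ to a totally real subbundle over the half-disc and ``trivializing'' produces only a \emph{smooth} frame, in which the operator becomes $\dbar+A$ with a zeroth-order term, not the standard $\dbar$ your correction argument assumes. To get a genuinely \emph{holomorphic} frame (holomorphic on the half-disc, smooth up to the diameter) carrying $\R^n$ to $F(x)$ you need the local Riemann--Hilbert straightening: writing $F(x)=B(x)\R^n$, one must factor $B(x)\overline{B(x)}^{-1}=\Lambda(x)\overline{\Lambda(x)}^{-1}$ with $\Lambda$ holomorphic and smooth up to the boundary, which is the classical local straightening lemma for totally real boundary conditions (cf.\ the appendix of McDuff--Salamon or Oh's Riemann--Hilbert papers). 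Alternatively you can keep the smooth frame and absorb $A$ by a Neumann-series argument on a sufficiently small half-disc, using your Schwarz-type solution operator for the model problem. Either patch is routine, and you correctly flag this as the remaining work; with it supplied, the proof is complete and agrees with the route underlying the cited Lemma 6.2 of \cite{CO}.
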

		
		\begin{prop}
			\label{prop:regularity}
			The holomorphic discs representing classes in (\ref{effdisc}) are Fredholm regular, i.e. its linearization $\bar{\partial}$ is surjective.
		\end{prop}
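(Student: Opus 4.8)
The plan is to use the resolution \eqref{ses4} to identify the obstruction space with a sheaf cohomology group, and then to show this group vanishes by decomposing $u^*T\mathfrak{M}_{u,\lambda}$ into a direct sum of line bundle pairs along which the Maslov contributions are non-negative. Concretely, from the fine resolution of $(\mathcal{E},\mathcal{F})$ we get that $\mathrm{coker}\,\bar\partial \cong H^1(D^2,\partial D^2; (\mathcal{E},\mathcal{F}))$, so Fredholm regularity is equivalent to the vanishing of this $H^1$. By the classification in Proposition \ref{prop:maslov2}, a holomorphic disc $u$ representing a class in \eqref{effdisc} maps into one of the open sets $U_{(I^+,I^-)}$, which is biholomorphic to the trivial $(\C^\times)^d$-bundle over $\C^d$, with only the $v_j$-component of $u$ nonconstant. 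This product structure is the key: it should let me split $E=u^*T\mathfrak{M}_{u,\lambda}$ (with its totally real boundary condition $F$) as a direct sum of rank-one pairs $(E_k,F_k)$, one for each base coordinate $v_k$ and each fiber coordinate $\nu_k$.

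The next step is to compute the degree (equivalently the Maslov index) of each summand. For the $2d-2$ directions corresponding to the constant components of $u$ (the base directions $v_k$ with $k\neq j$ and all fiber directions $\nu_k$), the relevant sub-bundle is trivial with the standard totally real boundary condition, so the pair has Maslov index $0$ and $H^1$ vanishes — each such factor is the trivial Riemann--Hilbert problem on the disc. The one nonconstant direction is the $v_j$-component: here $u$ is, up to reparametrization, the standard holomorphic map onto a disc in the $v_j$-line meeting $D^-_j$ once transversally, and the corresponding line bundle pair has Maslov index $2$, for which $H^1$ again vanishes (degree $\geq -1$ suffices on the disc). Since the total Maslov index is $0\cdot(2d-2)+2=2$, consistent with Lemma \ref{lemma:maslov}, and each summand has vanishing $H^1$, we conclude $H^1(D^2,\partial D^2;(\mathcal{E},\mathcal{F}))=0$, i.e. $\bar\partial$ is surjective, which is the assertion.

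The main obstacle is justifying the splitting of $(E,F)$ as a direct sum of line bundle pairs in a way that respects both the holomorphic structure of $E$ and the totally real boundary condition $F$ simultaneously. A priori the trivialization of $U_{(I^+,I^-)}$ as $\C^d\times(\C^\times)^d$ gives a splitting of $E$ as a holomorphic bundle, but one must check that the boundary condition $F=(u|_{\partial D^2})^*TL_b$ is compatible — i.e. that $L_b$, viewed inside this product chart via $\phi_s$ and $\mathrm{Log}_t$, is itself a product of circles in the base $\C^d$ factor and circles in the $(\C^\times)^d$ fiber factor, so that $F$ splits accordingly. This is essentially the statement that the Lagrangian fiber $L_b$ is a product torus in the chart, which follows from the construction: $L_b$ projects under $\bar\mu_\C$ to a product torus $L_{red}$ centered at $c$ (after applying $\phi_s$), and the fiber directions are genuine $T^d$-orbit circles. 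One should be slightly careful that the isotopies $\phi_s$ of Lemma \ref{lemma:moser} were only required to preserve $D^-$ and need not preserve the product structure on the nose; however, since regularity is an open condition and the disc classes are rigid (the $v_j$-component is unique up to reparametrization), it suffices to verify regularity for the model holomorphic discs in the standard product coordinates $(v_1,\ldots,v_d,\nu_1,\ldots,\nu_d)$ on $U_{(I^+,I^-)}$, where the splitting is manifest. Once the splitting is in place, the cohomology vanishing is the standard computation for line bundle pairs on the disc of non-negative Maslov index, following Lemma 7 of \cite{Auroux15}.
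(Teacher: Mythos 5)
Your overall strategy (separate the fiber/orbit directions from the reduced-space directions, deal with the trivial part and reduce the nonconstant $v_j$-direction to a one-dimensional Riemann--Hilbert problem via Oh's theorems) is the right one, but the step you yourself flag as the main obstacle is a genuine gap, and neither of your proposed fixes closes it. The direct-sum splitting of the pair $(E,F)$ into line-bundle pairs requires $L_b$ to be a product of circles in the holomorphic product coordinates $(v_1,\ldots,v_d,\nu_1,\ldots,\nu_d)$ on $U_{(I^+,I^-)}$, and this is false in general: $L_{red}=\bar{\mu}_{\C}(L_b)$ is a product torus only \emph{after} applying $\phi_s$, and $\phi_s$ is merely a symplectomorphism (a Moser flow), not a biholomorphism, so it is useless for splitting the $\bar\partial$-problem; in the actual coordinates one only knows that the coordinate projections of $L_{red}$ are loops around $c_i$. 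Moreover, even with a product $L_{red}$, the $T^d$-orbit radii $|\nu_k|$ vary over the base (they are dictated by the level set $\bar{\mu}_{\R}^{-1}(\tfrac{s}{2})$), so $TL_b$ is an \emph{extension} of $TL_{red}$ by the orbit directions, not a direct summand in the product trivialization; consequently your "constant base directions" would not carry the standard boundary condition either. Your fallback --- "regularity is an open condition, so it suffices to check the model product torus" --- is not a valid deduction: openness says problems near a regular one are regular, but Fredholm regularity is not invariant under Lagrangian isotopy, and the isotopy invariance used in the paper concerns the counts $n_\beta$, not regularity of individual discs.

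The paper avoids the splitting entirely. Since the $T^n/K$-action is free on $U_{(I^+,I^-)}$, one has the short exact sequences (\ref{SES1}) and (\ref{SES2}); pulling back by $u$, the bundle $E$ contains the \emph{trivial} holomorphic subbundle $u^*\mathcal{L}_{\C}$ with the trivial real boundary subbundle $(u|_{\partial D^2})^*\mathcal{L}_{\R}\subset F$ (spanned by the generating vector fields of the action, so here the boundary condition really is constant). The $\bar\partial$-problem for this subbundle pair is surjective, and then the long exact sequence coming from the resolution (\ref{ses4}) shows surjectivity for $(E,F)$ is equivalent to surjectivity for the quotient pair $\bigl(u_{red}^*T\C^d,\,(u_{red}|_{\partial D^2})^*TL_{red}\bigr)$, where $u_{red}=\bar{\mu}_{\C}\circ u$; since only the $j^{\mathrm{th}}$ component of $u_{red}$ is nonconstant, this is a one-dimensional Riemann--Hilbert problem, handled by Theorems II and III of \cite{Oh}. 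So the correct repair of your argument is to replace the direct-sum decomposition by this subbundle/quotient (extension) argument, which needs no compatibility of $F$ with the product trivialization.
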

		
		\begin{proof}
		Let $u:(D^2,\partial D^2)\to (\mathfrak{M}_{u,\lambda},L_b)$ be a holomorphic disc of class $\beta$ in (\ref{effdisc}) in . Denote by $u_{red}$ the composition $\bar{\mu}_{\C}\circ u:(D^2,\partial D^2)\to (\C^d,L_{red})$, where $L_{red}=\bar{\mu}_{\C}(L_b)$. Let $\mathcal{L}_{\R}$ and $\mathcal{L}_{\C}$ be the real and complex spans of the vector fields generating the $T^n/K$-action. Suppose $\beta\cdot[D^-_j]=1$, then, as noted in the proof of Proposition \ref{prop:maslov2}, both $L_b$ and the image of u are contained in an open set $U_{(I^+,I^-)}$ (see (\ref{conic_bundle})) for a splitting $(I^+,I^-)$ of $\bm{h}(j)$. The $T^n/K$-action is free on $U_{(I^+,I^-)}$, and thus we have the following short exact sequences:
		\begin{equation}
		\label{SES1}
		0\longrightarrow\mathcal{L}_{\C}\longrightarrow T\mathfrak{M}_{u,\lambda}\longrightarrow\bar{\mu}_{\C}^*T\C^d\longrightarrow 0,
		\end{equation}
		\begin{equation}
			\label{SES2}
			0\longrightarrow\mathcal{L}_{\R}\longrightarrow TL_b\longrightarrow\bar{\mu}_{\C}^*TL_{red}\longrightarrow 0,
		\end{equation}
		in $U_{(I^+,I^-)}$.	Pulling back the exact sequences above via $u$, we find that $E$ admits a trivial holomorphic subbundle $u^*\mathcal{L}_{\C}$, with a trivial real subbundle $(u|_{\partial D^2})^*\mathcal{L}_{\R}\subset F$ on the boundary.
			Since the $\bar{\partial}$-operator for complex-valued functions on the unit disc with trivial real boundary condition on the boundary circle is surjective, the surjectivity of $\bar{\partial}$ on sections of $E$ with boundary conditions $F$ is then equivalent to the surjectivity of $\bar{\partial}$ on the quotient bundle $E/u^*\mathcal{L}_{\C}=u_{red}^*T\C^d$ with boundary conditions $F/(u|_{\partial D^2})^*\mathcal{L}_{\R}=(u_{red}|_{\partial D^2})^*TL_{red}$. Since only the $j^{\mathrm{th}}$ component of $u_{red}$ is nonconstant, the surjectivity of $\bar{\partial}$ reduces to a one-dimensional Riemann-Hilbert problem which then follows from Theorem II and III in \cite{Oh}.
		\end{proof}

		\begin{prop}
			\label{prop:gw}
			With the notations as in Proposition \ref{prop:maslov2}, we have
			\[
			n_{\beta}=\left\{
			\begin{array}{ll}
			1  & \mbox{for } \beta=\beta^-_{j}+\delta_1\alpha_{j_1}+\ldots+\delta_N\alpha_{j_N},\\
			0 & \mbox{otherwise.}
			\end{array}
			\right.
			\]
		\end{prop}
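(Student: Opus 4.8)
The plan is to separate the two cases. The \emph{otherwise} case needs no real work: if $\mu(\beta)\neq 2$ then $n_\beta=0$ for dimension reasons, and if $\mu(\beta)=2$ but $\beta$ is not of the form (\ref{effdisc}), then Proposition~\ref{prop:maslov2} says $\beta$ has no holomorphic representative, so $\mathcal{M}_1(\beta;L_b)=\emptyset$ and again $n_\beta=0$. Thus the entire content is the statement $n_\beta=1$ for $\beta=\beta^-_j+\delta_1\alpha_{j_1}+\cdots+\delta_N\alpha_{j_N}$, and I would prove this by describing $\mathcal{M}_1(\beta;L_b)$ explicitly and computing the degree of the boundary evaluation map, following the pattern of \cite[Lemma~7, Corollary~8]{Auroux15}.

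Concretely, I would fix the splitting $(I^+,I^-)$ of $\bm{h}(j)$ determined by $\beta$ (as in the proof of Proposition~\ref{prop:maslov2}, $\delta_k=1$ iff $\mathrm{sgn}(j_k)\neq\sigma(j_k)$), so that every holomorphic disc $u$ of class $\beta$ has image in the single open set $U_{(I^+,I^-)}$, a trivial $(\C^{\times})^d$-bundle over $\C^d$ with coordinates $(v_1,\dots,v_d,\nu_1,\dots,\nu_d)$. By the maximum principle all components of $u$ except $v_j$ are constant, the constants $(v_i)_{i\neq j}$ and $(\nu_i)_i$ lying on the boundary circles of $L_b$, while the $v_j$-component is a proper degree-one holomorphic map of $D^2$, hence a biholomorphism onto the round disc of radius $\rho_j$ about $c_j$ whose boundary is the $v_j$-factor of $\bar{\mu}_{\C}(L_b)$. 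Modulo $\mathrm{Aut}(D^2)$ this $v_j$-component is unique, so adding a boundary marked point identifies $\mathcal{M}_1(\beta;L_b)$ with $T^{2d-1}\times S^1$, the $T^{2d-1}$ being the constants and the $S^1$ the position of the marked point; this is a torus $T^{2d}$ of the expected dimension, and Proposition~\ref{prop:regularity} together with a standard bubbling analysis (spheres in $\mathfrak{M}_{u,\lambda}$ have $c_1=0$ and nonconstant Maslov-$0$ discs bounded by $L_b$ occur only when $b$ lies on some $W_i$, by Proposition~\ref{prop:walls}) shows it is compact and cut out transversally, so $n_\beta$ is the honest degree of $\mathrm{ev}\colon\mathcal{M}_1(\beta;L_b)\to L_b$. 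The constants $(v_i)_{i\neq j}$ and $(\nu_i)_i$ evaluate to $2d-1$ of the circle factors of $L_b$, and, since $v_j|_{\partial D^2}$ winds once around its target circle, the marked-point circle evaluates to the remaining $j$-th base circle; hence $\mathrm{ev}$ is a diffeomorphism and $|n_\beta|=1$. The sign is $+1$ by the same local orientation computation as in the toric case \cite{CO} (equivalently, one may invoke invariance of $n_\beta$ under Lagrangian isotopies disjoint from $D^-$ to replace $\omega_{red,s}$ by the standard $T^d$-invariant form and reduce to the basic disc of $(\C,S^1)$).

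The step I expect to be the main obstacle is not the degree count itself but the accompanying bookkeeping: verifying that the parametrization above exhausts $\mathcal{M}_1(\beta;L_b)$ and that no nodal configuration in class $\beta$ survives over $b\in B^0\setminus\bm{W}$, checking that the homeomorphisms $\phi_s$ of Lemma~\ref{lemma:moser} (which are only homeomorphisms along $\bigcup_i H_{\R,i}$) cause no trouble here because $b\notin W_i$, and pinning down the orientation sign rather than merely $|n_\beta|$. None of this is deep, but it is where the care is required.
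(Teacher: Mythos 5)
Your argument is essentially the paper's: it likewise uses the projection $\bar{\mu}_{\C}$ to the reduced space and the maximum principle in the chart $U_{(I^+,I^-)}$ to show the discs in class $\beta$ form a $T^d$-bundle over $\mathcal{M}_1(L_{red},\bar{\beta})\cong L_{red}$, so that $\mathrm{ev}$ is a diffeomorphism of degree $\pm 1$, with the ``otherwise'' cases disposed of by the same dimension/effectivity remarks. The only difference is the sign: the paper fixes it by choosing a spin structure on $L_b$ standard along the $T^d$-orbits and compatible, under the splitting (\ref{SES2}), with the standard one on $L_{red}$, whereas you defer to the toric-case computation of \cite{CO} or isotopy invariance --- same content.
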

		
		\begin{proof}
			Due to dimension reason, we have $n_{\beta}=0$ for $\mu(\beta)\ne 2$. Suppose $\beta$ is an effective disc class with $\mu(\beta)=2$, intersecting the divisor $D^-_{j}$. Denote by $p\in\partial D^2$ be the unique boundary marked point on the unit disc $D^2$. Let $L_{red}=\bar{\mu}_{\C}(L_b)\subset\C^d$, and let $\bar{\beta}=(\bar{\mu}_{\C})_*\beta\in\pi_2(\C^d,L_{red})$. 
            Denote by $\bar{D}^-_{i}$ the divisor $\{(v_1,\ldots,v_d)\in\C^d|v_i=c_i\}$. We have $\bar{\beta}\cdot [\bar{D}^-_{j}]= 1$, and $\bar{\beta}\cdot[\bar{D}^-_{i}]=0$ for $i\ne j$.	
			Let's first consider the moduli space $\mathcal{M}_1(L_{red},\bar{\beta})$. By maximal principle, for any $[\bar{u}]\in\mathcal{M}_1(L_{red},\bar{\beta})$, all but the $j^{\mathrm{th}}$ component of $\bar{u}$ are constant, and the $j^{\mathrm{th}}$ component of $\bar{u}$ is unique up to automorphisms of $D^2$ fixing $p$. Thus, for each $q\in L_{red}$, there exists a unique $[\bar{u}]\in\mathcal{M}_1(L_{red},\bar{\beta})$ with $\bar{u}(p)=q$. Moreover, the map $\mathrm{ev}:\mathcal{M}_1(L_{red},\bar{\beta})\to L_{red}$ given by evaluation at the boundary marked point is a diffeomorphism. Now, consider the projection $\mathcal{M}_1(L_b,\beta)\to\mathcal{M}_1(L_{red},\bar{\beta})$ given by post-composing holomorphic discs $u:(D^2,\partial D^2)\to (\mathfrak{M}_{u,\lambda},L_b)$ with $\bar{\mu}_{\C}$. We will show momentarily that for any given $[\bar{u}]\in\mathcal{M}_1(L_{red},\bar{\beta})$, and a lift $\tilde{q}\in L_b$ of $q$, there exist a unique $[u]\in \mathcal{M}_1(L_b,\beta)$ with $\bar{\mu}_{\C}\circ u=\bar{u}$ and $u(p)=\tilde{q}$. Any holomorphic disc in ${M}_1(L_b,\beta)$ has its image is contained in $U_{(I^+,I^-)}$ for a splitting $(I^+,I^-)$ of $\bm{h}(j)$. Recall that $U_{(I^+,I^-)}$ is biholomorphic to the trivial $(\C^{\times})^d$-bundle over $\C^d$. Denote by $(v_1,\ldots,v_d,\nu_1,\ldots,\nu_d)$ the complex coordinates on this open set with $v_1,\ldots,v_d$ being the base coordinates and $\nu_1,\ldots,\nu_d$ being the fiber coordinates. Write $\tilde{q}=(\tilde{q}_1,\ldots,\tilde{q}_{2d})$. We define the lift of $\bar{u}$ to be the holomorphic disc $u:(D^2,\partial D^2)\to (U_{(I^+,I^-)},L_b)$ defined by
			\[ 
			u(\zeta)=(\bar{u}(\zeta),\tilde{q}_{d+1},\ldots,\tilde{q}_{2d}).
			\] 
			We have a free $T^d$-action on $\mathcal{M}_1(L_b,\beta)$ given by composing holomorphic discs $[u]\in\mathcal{M}_1(L_b,\beta)$ with the $T^d$-action on $\mathfrak{M}_{u,\lambda}$. The orbits of this action are exactly the fibers of $\mathcal{M}_1(L_b,\beta)\to\mathcal{M}_1(L_{red},\bar{\beta})$. Therefore, $\mathcal{M}_1(L_b,\beta)\to\mathcal{M}_1(L_{red},\bar{\beta})$ is a $T^d$-bundle. Since the evaluation map $\mathrm{ev}:\mathcal{M}_1(L_b,\beta)\to L_b$ is $T^d$-equivariant, it is again a diffeomorphism, i.e. it is of degree $\pm 1$.
			
		    As for the orientations of $\mathcal{M}_1(L_b,\beta)$, recall that a spin structure on $L_b$ determines an orientation on $\mathcal{M}_1(L_b,\beta)$ (see \cite[Chapter 8]{FOOO}). Since $L_{red}$ is isotopic to the standard product torus in $\C^d$, we can choose the standard spin structure on $L_{red}$ such that $\mathrm{ev}:\mathcal{M}_1(L_{red},\bar{\beta})\to L_{red}$ is orientation-preserving. We choose the spin structure on $L_b$ to be standard along the $T^d$-orbits and consistent under the splitting (\ref{SES2}) with the spin structure previously chosen on $L_{red}$. Then, with the induced orientation on $\mathcal{M}_1(L_b,\beta)$, the evaluation map $\mathrm{ev}:\mathcal{M}_1(L_b,\beta)\to L_b$ is orientation-preserving, i.e. it is of degree $1$.  
		\end{proof}
	
		\begin{prop}
		\label{prop:unobstructed}
		$L_b$ is weakly unobstructed.
	\end{prop}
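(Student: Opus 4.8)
The plan is to show that the curvature term $\mathfrak{m}_0(1)$ of the Fukaya $A_\infty$-algebra of $L_b$ in the sense of Fukaya--Oh--Ohta--Ono \cite{FOOO} (well defined here since $L_b$ is an orientable, in fact spin, torus) is a scalar multiple of the unit $\mathbf{1}_{L_b}\in H^0(L_b)$; this is precisely the definition of weak unobstructedness. Writing, schematically, $\mathfrak{m}_0(1)=\sum_{\beta}\exp(-\int_\beta\omega)\,\mathrm{Hol}_{\nabla}(\partial\beta)\,\mathrm{ev}_*[\mathcal{M}_1(\beta;L_b)]^{\mathrm{vir}}$ with the sum over classes $\beta\in\pi_2(\mathfrak{M}_{u,\lambda},L_b)$ carrying nonconstant stable holomorphic discs, it suffices to show that each pushforward $\mathrm{ev}_*[\mathcal{M}_1(\beta;L_b)]^{\mathrm{vir}}$ is a multiple of the fundamental cycle $[L_b]$, which is Poincar\'e dual to $\mathbf{1}_{L_b}$.

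The one real input is a minimal-Maslov-number statement: every nonconstant holomorphic disc with boundary on $L_b$ has Maslov index $\ge 2$. Since $b\in B^0\setminus\bm W$, the point $b$ avoids the part of the discriminant locus lying over $D^-$, so $L_b\cap D^-=\emptyset$; positivity of intersection then gives $\beta\cdot[D^-]\ge 0$ for every such $\beta$, and hence $\mu(\beta)=2\,\beta\cdot[D^-]\ge 0$ by Lemma \ref{lemma:maslov}. The borderline case $\mu(\beta)=0$ is ruled out by Proposition \ref{prop:walls}: a fiber over $B^0$ bounds a nonconstant Maslov index $0$ disc only if it lies on one of the walls $W_i$, whereas $b\notin\bm W=\bigcup_i W_i$. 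Maslov indices are additive under disc bubbling, so the bound persists for broken stable configurations as well.

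With $\mu(\beta)\ge 2$ in hand the conclusion is a dimension count. The moduli space $\mathcal{M}_1(\beta;L_b)$ has virtual dimension $\dim_{\R} L_b+\mu(\beta)-2\ge\dim_{\R} L_b$, so $\mathrm{ev}_*[\mathcal{M}_1(\beta;L_b)]^{\mathrm{vir}}$ is either zero (when $\mu(\beta)>2$, the source cycle exceeds the dimension of $L_b$) or a multiple of $[L_b]$ (when $\mu(\beta)=2$, the evaluation map is between closed manifolds of equal dimension, so its image cycle is a multiple of the fundamental class; Proposition \ref{prop:gw} computes the multiplicity). Summing over $\beta$ gives $\mathfrak{m}_0(1)\in\Lambda_+\cdot\mathbf{1}_{L_b}$, which is the assertion; combining with Propositions \ref{prop:maslov2} and \ref{prop:gw} one reads off that the proportionality constant is exactly the superpotential $W$ evaluated at $(L_b,\nabla)$. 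The argument is essentially formal once the Maslov bound is established, so the only step requiring care is the verification of the hypotheses $b\notin\bm W$ and $b\in B^0$ (equivalently $L_b\cap D^-=\emptyset$) that feed into it; alternatively the whole statement may be quoted from the general theory of \cite[\S 6]{Auroux07}, \cite{CLL}, \cite[Ch.~3]{FOOO} once the minimal Maslov number is known to be $\ge 2$.
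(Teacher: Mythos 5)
Your argument is correct and is essentially the paper's own proof: degree reasons restrict the contributions to $\fm_0$ to Maslov index $\le 2$, negative and zero Maslov index nonconstant discs are excluded (via Lemma \ref{lemma:maslov} and, as you make explicit, Proposition \ref{prop:walls} since $b$ lies off the walls), constant discs are unstable with a single marking, and Proposition \ref{prop:gw} shows the $\mu=2$ contributions push forward to multiples of the fundamental class, hence of the unit. Your treatment is only marginally more explicit than the paper's (positivity of intersection with $D^-$ and the wall condition), but the route is the same.
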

	\begin{proof}
		Due to degree reason, only stable holomorphic discs of Maslov index less than or equal to $2$ can contribute to $\fm_0^b$.  In our case there is no stable discs with negative Maslov index (Prop. \ref{lemma:maslov}).  Thus the only discs with Maslov index less than $2$ are the constant ones, which are not stable since there is only one output marking.
		For an effective disc class $\beta$ of Maslov index $2$, the evaluation map at the boundary marked point gives a diffeomorphism  $\mathcal{M}_1(L_b,\beta) \to L_b$ (Prop. \ref{prop:gw}).  Hence $\fm_0^b$, which is the sum over $\beta$ of $\mathrm{ev}_* [\mathcal{M}_1(L_b,\beta)]$ weighted by $T^{-\int_\beta\omega}$ (where $T$ is the formal Novikov parameter), is proportional to the fundamental class of $L_b$.
	\end{proof}

		\subsection{Partial compactifications of hypertoric varieties.} \label{sec:cptfy}
		Our idea of constructing the mirror $\mathfrak{M}_{u,\lambda}^{\vee}$ is to construct coordinate functions of $\mathfrak{M}_{u,\lambda}^{\vee}$ by counting holomorphic discs emanating from boundary divisors of ${\mathfrak{M}_{u,\lambda}}$. The problem is that in our situation, $B$ has only $d$ codimension-one boundary, while we need $2d$ coordinate functions. To resolve this, one may consider counting holomorphic cylinders (with one boundary component on $L$ and the other asymptotic to infinity), which requires the extra work of defining rigorously the corresponding Gromov--Witten invariants. Another way is to consider a partial compactification of ${\mathfrak{M}_{u,\lambda}}$ by adding \textit{divisors at infinity} and count the additional holomorphic Maslov index $2$ discs emanated from these divisors. We will use the second approach in this paper. This method was used in \cite{CLL}, \cite{AAK} to construct mirrors of Calabi-Yau toric varieties, and blow-ups of toric varieties along a hypersurface. 
				
		Recall from Remark \ref{rmk:holfib} that the holomorphic moment map $\bar{\mu}_{\C}:\mathfrak{M}_{u,\lambda}\to\C^d$ is a holomorphic $(\C^{\times})^d$-fibration. We can partially compactify $\mathfrak{M}_{u,\lambda}$ by extending $\bar{\mu}_{\C}$ to a holomorphic $(\C^{\times})^d$-fibration over $(\mathbb{P}^1)^d$. 
		
		Let $([\zeta_1:\tilde{\zeta_1}],\ldots,[\zeta_n:\tilde{\zeta_n}])$ be the homogeneous coordinates on $(\mathbb{P}^1)^n$. We embed $\C^d$ into $(\mathbb{P}^1)^n$ via the map $(v_1,\ldots,v_d)\mapsto ([v_1:1],\ldots,[v_d:1],[v_{d+1}:1],\ldots,[v_{n}:1])$, where $v_{\ell}=\sum_{k=1}^d a_{\ell k}v_k+b_{\ell}$ for $\ell=d+1,\ldots,n$,. Its closure $\overline{\C^d}$ in $(\mathbb{P}^1)^n$ is defined by the following homogeneous polynomials
		\[
		f_{\ell}=\tilde{\zeta_1}\ldots\tilde{\zeta}_d\zeta_{\ell}-\sum_{i=1}^d a_{\ell i} \tilde{\zeta}_1\ldots\zeta_i\ldots\tilde{\zeta}_d\tilde{\zeta}_{\ell}+b_{\ell}\tilde{\zeta}_1\ldots\tilde{\zeta}_d\tilde{\zeta}_{\ell}, \quad \ell=d+1,\ldots,n,
		\]
		and is biholomorphic to $(\CP^1)^d$. The hyperplanes $\{H_{\C,i}\}_{i=1}^n$ extends naturally to divisors $\{\bar{H}_{\C,i}\}_{i=1}^n$ on $\overline{\C^d}$ defined by 
		\[
		\bar{H}_{\C,i}=\{([\zeta_1:\tilde{\zeta_1}],\ldots,[\zeta_n:\tilde{\zeta_n}])\in\overline{\C^d}|\zeta_i=0\}.
		\]
		
		Let $E$ be total space of the rank $2n$ complex vector bundle on $(\mathbb{P}^1)^n$ defined by
		\[
		E=\mathcal{O}(\bar{H}_{\C,1})\oplus\mathcal{O}_1\oplus\ldots\oplus\mathcal{O}(\bar{H}_{\C,n})\oplus\mathcal{O}_n\to (\CP^1)^n,
		\]
		where $\mathcal{O}_i=\mathcal{O}$ are trivial complex line bundles. Denote by $w_i$ the fiber coordinate of $\mathcal{O}_i$, $z_i$ the local coordinate of the $\mathcal{O}(\bar{H}_{\C,i})$ over $U_i=\{\tilde{\zeta}_i\ne 0\}$, and $\tilde{z}_i$ the local coordinate of $\mathcal{O}(\bar{H}_{\C,i})$ over $\tilde{U}_i=\{\zeta_i\ne 0\}$. The gluing between $\mathcal{O}(\bar{H}_{\C,i})|_{U_i}$ and $\mathcal{O}(\bar{H}_{\C,i})|_{\tilde{U}_i}$ is given by $z_i\tilde{\zeta}_i=\zeta_i\tilde{z}_i$. For $i=1,\ldots,d$, let $g_i=z_i\tilde{\zeta}_iw_i-\zeta_i$. Let $V\subset E$ the subvariety defined by the ideal $(f_{d+1},\ldots,f_n,g_1,\ldots,g_n)$. 
		
		We now define a $(\C^{\times})^n$-action on $E$. For $\vec{t}=(t_1,\ldots,t_n)\in(\C^{\times})^n$, let $\vec{t}$ act on $\mathcal{O}(\bar{H}_{\C,i})$ via multiplication by $t_i$ and on $\mathcal{O}_i$ via multiplication by $t_i^{-1}$. Let $\vec{t}$ act trivially on the base $(\mathbb{P}^1)^n$. $V$ is then a $(\C^{\times})^n$-invariant subvariety of $E$. Let $K_{\C}\subset (\C^{\times})^n$, and $\lambda_{\R}:K_{\C}\to \C^{\times}$ be the same as in Definition \ref{def:hypertoric}. Then, the GIT quotient
		\[
		\overline{\mathfrak{M}}_{u,\lambda}=V//_{\lambda_{\R}} K_{\C}
		\]
		is a partial compactification of $\mathfrak{M}_{u,\lambda}$. The embedding $\mathfrak{M}_{u,\lambda}\hookrightarrow\overline{\mathfrak{M}}_{u,\lambda}$ is holomorphic and $(\C^{\times})^n/K_{\C}$-equivariant.
		
		Alternatively, we can construct $\overline{\mathfrak{M}}_{u,\lambda}$ via symplectic reduction. Notice that the subbundles $\mathcal{O}(\bar{H}_{\C,i})\to (\CP^1)^n$ of $E$ are the pullbacks of $\mathcal{O}(1)\to \CP^1$ via the projections $(\CP^1)^n\to \CP^1$ to the $i^{\mathrm{th}}$ component. The sum of pullbacks of Fubini-Study form then defines a Kähler form on the total space of the subbundle $\bigoplus_{i=1}^n\mathcal{O}(\bar{H}_{\C,i})$. Combined with the standard symplectic form on the fibers of $\mathcal{O}_i$, we have a $T^n$-invariant Kähler form $\omega_E$ on $E$.
		We can construct $\overline{\mathfrak{M}}_{u,\lambda}$ as the symplectic reduction of $V$ at level $\lambda_{\R}$ with respect to the action of the maximal torus $K\subset K_{\C}$ and the restriction of $\omega_{E}$ to $V$. This equips $\overline{\mathfrak{M}}_{u,\lambda}$ with a $T^n/K$-invariant Kähler form $\bar{\omega}$.
		
		We can then construct a Lagrangian torus fibration
		\[
		\bar{\pi}:\overline{\mathfrak{M}}_{u,\lambda}\to\bar{B}=\R^d\times(\R\cup\{\pm\infty\})^d
		\] 
	using symplectic reductions as in Section \ref{sec:fib}. The reduced spaces are biholomorphic to $(\mathbb{P}^1)^d$. Since the reduced spaces are now compact, the construction of $\bar{\pi}$ is simple applications of Moser's trick, and hence omitted. The discriminant loci $\bar{\Sigma}$ of $\bar{\pi}$ is the union $\Sigma$ and the new boundaries of $\bar{B}$ at infinity. Notice that we have $\bar{B}\setminus\bar{\Sigma}=B^0\subset B$.
	
	\begin{remark}
If we were to strictly follow the SYZ construction outlined in Section \ref{sec:SYZ}, we could have compactified $\mathfrak{M}_{u,\lambda}$ by compactifying the fiber directions of $E$. However, since the cycles $\gamma_{\sigma,d+1},\ldots,\gamma_{\sigma,2d}\in H_1(L_b;\Z)$ (see Section \ref{sec:disc2}) are monodromy-invariant, the count of holomorphic discs emanated from the these additional divisors would receive no quantum correction. Therefore, it suffices to consider the partial compactification $\overline{\mathfrak{M}}_{u,\lambda}$.
	\end{remark}
		
		We now state the results analogous to Propositions \ref{lemma:maslov}, \ref{prop:walls}, \ref{prop:maslov2}, \ref{prop:regularity}, \ref{prop:unobstructed} and \ref{prop:gw} in order to define the additional generating functions.  We will be brief since the proofs are nearly identical to the previous ones.
				
		Denote by $D^+_{i}$ the divisor given by
		\[ D^+_{i}=\{(\{\tilde{\zeta}_i=0\}\medcap V)//_{\lambda_{\R}} K_{\C}\}.\]
		 Let $D^+:=\sum_{i=1}^d D^+_{i}$, and set $D:=D^-+D^+$. We will assume the isotopies obtained from Moser's tricks leaves $D$ invariant.
		
		\begin{prop}
			\label{prop:cptmaslov}
			Let $L_b$ be the fiber of $\bar{\pi}:\overline{\mathfrak{M}}_{u,\lambda}\to\bar{B}$ over $b\in B^0$. For any disc class $\beta\in \pi_2(\overline{\mathfrak{M}}_{u,\lambda}, L_b)$, the Maslov index $\mu(\beta)$ is equal to twice the algebraic intersection number $\beta\cdot [D]$.
		\end{prop}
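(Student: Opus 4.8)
The plan is to imitate the proof of Lemma~\ref{lemma:maslov}: produce a meromorphic volume form $\bar{\Omega}$ on $\overline{\mathfrak{M}}_{u,\lambda}$ whose pole divisor is exactly $D=D^-+D^+$, check that the Moser isotopies of Section~\ref{sec:cptfy} carry each fiber $L_b$ with $b\in B^0$ to a standard product torus that is special with respect to $\bar{\Omega}$, and then invoke the standard argument relating the vanishing of the Maslov class in $\overline{\mathfrak{M}}_{u,\lambda}\setminus D$ to the intersection formula $\mu(\beta)=2\,\beta\cdot [D]$.

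First I would take $\bar{\Omega}$ to be the meromorphic extension across the divisors at infinity of the volume form $\Omega=\big(\bigwedge_{i=1}^d dz_i\wedge dw_i\big)\big/\big(\prod_{i=1}^d z_iw_i-c_i\big)$ of Lemma~\ref{lemma:maslov}. In the charts $U_{(I^+,I^-)}$ of Section~\ref{sec:disc2}, which are biholomorphic to the trivial $(\C^{\times})^d$-bundle over $\C^d$ with base coordinates $v_i=z_iw_i$ and fiber coordinates $\nu_i$, one has, up to a nonzero constant,
\[
\bar{\Omega}=\Bigg(\bigwedge_{i=1}^d\frac{dv_i}{v_i-c_i}\Bigg)\wedge\Bigg(\bigwedge_{i=1}^d\frac{d\nu_i}{\nu_i}\Bigg),
\]
that is, $\bar{\Omega}$ is the wedge of the pullback under $\bar{\mu}_{\C}$ of the form $\bigwedge_i dv_i/(v_i-c_i)$ on the base with the fiberwise logarithmic volume form. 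Since the reduced spaces are compactified from $\C^d$ to $(\CP^1)^d$, the base form extends to $(\CP^1)^d$ with simple poles along $\{v_i=c_i\}$ and along $\{v_i=\infty\}$ and with no zeros: near infinity, in the coordinate $\tilde{v}_i=v_i^{-1}$ one has $dv_i/(v_i-c_i)=-d\tilde{v}_i/\big(\tilde{v}_i(1-c_i\tilde{v}_i)\big)$.

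Second comes the divisor computation for $\bar{\Omega}$, carried out in the explicit charts of $E$, $V$ and the GIT quotient by $K_\C$ used to build $\overline{\mathfrak{M}}_{u,\lambda}$. The fiber form $\bigwedge_i d\nu_i/\nu_i$ is holomorphic and nowhere vanishing on the $(\C^{\times})^d$-fibers because the fiber directions are not compactified, and the degenerate fibers of $\bar{\mu}_{\C}$ still lie only over the (finitely many) hyperplanes $\bar{H}_{\C,i}$; hence $\bar{\Omega}$ is holomorphic and nowhere vanishing on $\overline{\mathfrak{M}}_{u,\lambda}\setminus D$, has a simple pole along each $D^-_{i}=\{v_i=c_i\}$ (as in Lemma~\ref{lemma:maslov}) and, by the computation above, a simple pole along each $D^+_{i}=\{v_i=\infty\}=\{\tilde{\zeta}_i=0\}$. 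Therefore $D=D^-+D^+$ is an anti-canonical divisor of $\overline{\mathfrak{M}}_{u,\lambda}$. Checking that no spurious poles or zeros of $\bar{\Omega}$ arise over the degeneration loci of $\bar{\mu}_{\C}$, nor along the strata where the divisors at infinity meet one another or the $\bar{H}_{\C,i}$, is the only place where the present situation genuinely differs from Lemma~\ref{lemma:maslov}, and is the main technical point; since everything factorizes $\CP^1$ by $\CP^1$, it is a bookkeeping matter.

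Finally, with $\bar{\Omega}$ at hand the argument of Lemma~\ref{lemma:maslov} applies verbatim. For $b=(s,\tau)\in B^0$ with $s\notin H_{\R,i}$ for all $i$, the Moser isotopies of Section~\ref{sec:cptfy} (adapted to the compact reduced spaces $(\CP^1)^d$ and, by the standing assumption, leaving $D$ invariant) take $\bar{\mu}_{\C}(L_b)$ to a standard product torus $\{|v_i-c_i|=\mathrm{const}\}$ and lift to a family of homeomorphisms of $\overline{\mathfrak{M}}_{u,\lambda}$ carrying $L_b$ to a product torus on which $\bar{\Omega}$ restricts with constant phase, since on $\{|v_i-c_i|=r\}$ and $\{|\nu_i|=\rho\}$ the forms $dv_i/(v_i-c_i)$ and $d\nu_i/\nu_i$ restrict to $\sqrt{-1}\,d\theta_i$ and $\sqrt{-1}\,d\phi_i$. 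When $s\in H_{\R,i}$ one first isotopes $L_b$ to a nearby smooth fiber, exactly as in Lemma~\ref{lemma:maslov}. Thus the phase $\arg(\bar{\Omega}|_{L_b})\colon L_b\to\bS^1$ is null-homotopic, the Maslov class of $L_b$ vanishes in $\overline{\mathfrak{M}}_{u,\lambda}\setminus D$, and the well-known computation (\cite[Lemma~3.1]{Auroux07}, \cite{AAK}) yields $\mu(\beta)=2\,\beta\cdot [D]$ for every $\beta\in\pi_2(\overline{\mathfrak{M}}_{u,\lambda},L_b)$.
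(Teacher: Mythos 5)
Your proposal is correct and follows essentially the same route as the paper: extend $\Omega$ to a meromorphic volume form $\bar{\Omega}$ on $\overline{\mathfrak{M}}_{u,\lambda}$ with simple poles exactly along $D=D^-+D^+$, and then rerun the phase/Moser argument of Lemma \ref{lemma:maslov}. The only difference is in execution: the paper checks the pole divisor by writing $\bar{\Omega}$ as an explicit $(\C^{\times})^n$-invariant form in every coordinate chart of the ambient bundle $E$ and descending, whereas you work downstairs in the base-fiber product charts and defer that chart-by-chart verification (this is precisely the ``bookkeeping'' the paper carries out, and the loci your charts miss are of codimension two, hence harmless for determining the divisor of $\bar{\Omega}$).
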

		
		\begin{proof}
			We first extend the meromorphic volume form $\Omega$ (see Proposition \ref{lemma:maslov}) on $\mathfrak{M}_{u,\lambda}$ to a meromorphic volume form on $\overline{\mathfrak{M}}_{u,\lambda}$ with  generically simple poles along $D$. Consider the form 
			\[
			\bar{\Omega}=\cfrac{\bigwedge_{i=1}^n d\log\xi_i\wedge d\log w_i}{\prod_{i=1}^n 1-\frac{c_i}{\xi_i}}
			\]
			defined on $U=\{\tilde{\zeta}_i\ne 0,\forall i\}\subset E$. Its restriction to $U\cap V$ descends to $\Omega$ on $\mathfrak{M}_{u,\lambda}$. Let $I\subset\{1,\ldots,n\}$, and set $U_I=\{\tilde{\zeta}_i\ne 0,\forall i\in I\}$, $\tilde{U}_I=\{\zeta_i\ne 0,\forall i\in I\}$. Let $I^-\coprod I^+$ be a splitting of $\{1,\ldots,n\}$. We extend $\bar{\Omega}$ to $W$ by defining it to be
			\[
			\cfrac{(-1)^{\mathrm{sgn}(I^-,I^+)}\left(\bigwedge_{i\in I^-}d\log\xi_i\wedge d\log w_i\right)\left(\bigwedge_{j\in I^+}-d\log \tilde{\xi_j}\wedge d\log w_j\right)}{\left(\prod_{i\in I^-} 1-\frac{c_i}{\xi_i}\right)\left(\prod_{j\in I^+} 1-c_j\tilde{\xi_j}\right)}
			\]
			on $U_{I^-}\cap \tilde{U}_{I^+}$, where $\mathrm{sgn}(I^-,I^+)$ is the sign of the concatenation of $I^-$ and $I^+$ as a permutation. Note that the expression above is simply given by rewriting $\bar{\Omega}$ under the change of coordinates. We denote the extension of $\bar{\Omega}$ to $E$ again by $\bar{\Omega}$. $\bar{\Omega}$ is $(\C^{\times})^n$-invariant, hence its restriction to $V$ descends to a meromorphic volume form on $\overline{\mathfrak{M}}_{u,\lambda}$, which is the extension of $\Omega$. With $\bar{\Omega}$ constructed, the proof then follows from Proposition \ref{lemma:maslov}.
		\end{proof}	
		
		The restriction of the projection $E\to (\CP^1)^n$ to $V$ descends to a holomorphic $(\C^{\times})^d$-fibration $\rho:\overline{\mathfrak{M}}_{u,\lambda}\to (\CP^1)^d$, extending $\bar{\mu}_{\C}:\mathfrak{M}_{u,\lambda}\to\C^d$. We denote by $\rho_0=\bar{\mu}_{\C}:\overline{\mathfrak{M}}_{u,\lambda}\setminus D^+\to\C^d$, and $\rho_{\infty}:\overline{\mathfrak{M}}_{u,\lambda}\setminus D^-\to\C^d$ the restrictions of $\rho$ to the respective domains.
		
		\begin{prop}
			\label{walls2}
			The walls of the Lagrangian torus fibration $\bar{\pi}:\overline{\mathfrak{M}}_{u,\lambda}\to\bar{B}$ are the sets $\{W_i\}_{i=1}^n$ defined in Proposition \ref{prop:walls}.
		\end{prop}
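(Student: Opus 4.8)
The plan is to show that the wall of $\bar{\pi}$, i.e. the set of $b\in B^0$ over which $L_b$ bounds a nonconstant holomorphic disc of Maslov index $0$ in $\overline{\mathfrak{M}}_{u,\lambda}$, coincides with the wall of $\pi$, which by Proposition \ref{prop:walls} equals $\bigcup_{i=1}^n W_i$; the two inclusions then give the statement (recall $\bar B\setminus\bar\Sigma=B^0$, so the two walls live over the same base region). The one geometric input beyond Propositions \ref{prop:walls} and \ref{prop:cptmaslov} is that over $B^0$ the partial compactification adds precisely the divisor $D^+=\sum_{i=1}^d D^+_i$, i.e. $\overline{\mathfrak{M}}_{u,\lambda}\setminus D^+=\mathfrak{M}_{u,\lambda}$. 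I would check this first: $\overline{\mathfrak{M}}_{u,\lambda}\setminus\mathfrak{M}_{u,\lambda}$ is covered by the loci $\{\tilde{\zeta}_i=0\}$ for $i=1,\dots,n$, but since $\overline{\C^d}$ is the closure of the graph of $(v_1,\dots,v_d)\mapsto\bigl(v_1,\dots,v_d,\sum_k a_{d+1,k}v_k+b_{d+1},\dots,\sum_k a_{nk}v_k+b_n\bigr)$, any point of $\overline{\C^d}$ with $v_\ell=\infty$ for some $\ell>d$ already has $v_k=\infty$ for some $k\le d$; hence $\{\tilde{\zeta}_\ell=0\}\cap\overline{\C^d}\subseteq\bigcup_{k=1}^d\{\tilde{\zeta}_k=0\}\cap\overline{\C^d}$ for $\ell>d$, and passing to the GIT quotient yields $\overline{\mathfrak{M}}_{u,\lambda}\setminus\mathfrak{M}_{u,\lambda}=\bigcup_{i=1}^d D^+_i$. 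I would also record that $L_b$ for $b\in B^0$ is disjoint from $D$ (the projection $\bar\mu_\C(L_b)$ misses each $\{v_i=c_i\}$, and $L_b\subset\mathfrak{M}_{u,\lambda}$ misses $D^+$).

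Granting this, I would argue as follows. If $b\in B^0$ and $L_b$ bounds a nonconstant Maslov index $0$ disc $u$ in $\mathfrak{M}_{u,\lambda}$, then viewing $u$ inside $\overline{\mathfrak{M}}_{u,\lambda}$ its image lies in $\mathfrak{M}_{u,\lambda}$ and hence is disjoint from $D^+$, and is disjoint from $D^-$ by Lemma \ref{lemma:maslov} together with positivity of intersection; therefore $[u]\cdot D=0$, and Proposition \ref{prop:cptmaslov} shows $u$ still has Maslov index $0$ in $\overline{\mathfrak{M}}_{u,\lambda}$, so $b$ lies on the wall of $\bar{\pi}$. Conversely, if $b\in B^0$ and $L_b$ bounds a nonconstant Maslov index $0$ disc $u$ in $\overline{\mathfrak{M}}_{u,\lambda}$, then $[u]\cdot D=0$ by Proposition \ref{prop:cptmaslov}; since $\partial D^2$ maps into $L_b\subset\mathfrak{M}_{u,\lambda}$, which is disjoint from $D$, the disc is not contained in any irreducible component of $D$, so positivity of intersection forces $u(D^2)\cap D=\emptyset$, and in particular $u(D^2)\subseteq\overline{\mathfrak{M}}_{u,\lambda}\setminus D^+=\mathfrak{M}_{u,\lambda}$. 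Because the Maslov index depends only on $u^*T\overline{\mathfrak{M}}_{u,\lambda}=u^*T\mathfrak{M}_{u,\lambda}$ along the image, $u$ is a nonconstant Maslov index $0$ disc in $\mathfrak{M}_{u,\lambda}$ bounding $L_b$, so $b\in\bigcup_{i=1}^n W_i$ by Proposition \ref{prop:walls}. The two inclusions identify the wall of $\bar{\pi}$ with $\bigcup_{i=1}^n W_i$.

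I expect the only delicate point to be the geometric fact in the first paragraph: checking that over $B^0$ the compactification introduces exactly the $d$ boundary divisors $D^+_1,\dots,D^+_d$ and no further strata, so that ``disjoint from $D^+$'' is literally equivalent to ``contained in $\mathfrak{M}_{u,\lambda}$''. Everything else is a verbatim repetition of the proof of Proposition \ref{prop:walls}, with Proposition \ref{prop:cptmaslov} used in place of Lemma \ref{lemma:maslov}.
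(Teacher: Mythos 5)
Your proof is correct and takes essentially the same route as the paper's: the paper's (much terser) argument likewise uses Proposition \ref{prop:cptmaslov} to conclude that any Maslov index $0$ disc bounded by a fiber over $B^0$ must be contained in $\mathfrak{M}_{u,\lambda}$, and then reduces to Proposition \ref{prop:walls}. The details you supply --- that $\overline{\mathfrak{M}}_{u,\lambda}\setminus\mathfrak{M}_{u,\lambda}$ is exactly the support of $D^+$, positivity of intersection, and invariance of the Maslov index under the open inclusion --- are precisely what the paper leaves implicit, so no further changes are needed.
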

		
		\begin{proof}
		Since $\bar{B}\setminus\bar{\Sigma}=B^0$, any fiber over $B^0$ is contained in $\mathfrak{M}_{u,\lambda}$. By Proposition \ref{prop:cptmaslov}, any Maslov index $0$ holomorphic disc $u:(D^2,\partial D^2)\to (\overline{\mathfrak{M}}_{u,\lambda},L_b)$ is contained in $\mathfrak{M}_{u,\lambda}$. Composing $u$ with $\rho_0$ reduces this to Proposition\ref{prop:walls}.
		\end{proof}
				
		We again denote by $\mathcal{C}_{\bm{h}}$ and $B^0_{\sigma}$ the chambers and simply connected affine charts on $B^0\subset\bar{B}$, respectively. Let's fix a reference point $b\in B^0_{\sigma}$ and assume $b\in \mathcal{C}_{\bm{h}}$. We now classify the effective disc classes $\beta\in\pi_2(\overline{\mathfrak{M}}_{u,\lambda},L_{b})$ with $\mu(\beta)=2$.
	
	We express any vector $v$ in the basis $\{u_1,\ldots,u_d\}$, and denote the corresponding coefficients by $v^{(i)}$.			
		\begin{prop}
			\label{prop:cptmaslov2}
			Denote by $\beta^+_{1},\ldots,\beta^+_{d}\in\pi_2(\overline{\mathfrak{M}}_{u,\lambda},L_b)$ the disc classes given by the cycles $\gamma_{\sigma,1}\ldots,\gamma_{\sigma,d}\in H_1(L_b;\Z)$ (see Section \ref{sec:disc2}) vanishing on $D^+_{1},\ldots,D^+_{d}$. The effective disc classes $\beta\in\pi_2(\overline{\mathfrak{M}}_{u,\lambda},L_b)$ with $\mu(\beta)=2$ are of the form
			\begin{equation}
			\label{effdisc2}
			\beta=\beta^{\pm}_{j}+\delta_1\alpha_{j_1}+\ldots+\delta_N\alpha_{j_N},\quad j=1,\ldots,d,
			\end{equation}
			where $\delta_i\in\{0,1\}$, and $j_1,\ldots,j_N\in\{1,\ldots,n\}$ is the set of indices such that $h_{j_k}=j$ if $\beta$ has the $\beta^-_{j}$ component, and it is the set indices such that $u_{j_k}^{(j)}\ne 0$ and $h_{j_k}\ne j$ if $\beta$ has the $\beta^+_{j}$ component. This means the projections of holomorphic discs of class $\beta$ in $\bar{B}$ cross the walls $W_{j_1},\ldots,W_{j_N}$.
		\end{prop}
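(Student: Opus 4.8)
The strategy is to mirror the proof of Proposition~\ref{prop:maslov2}, the new feature being the holomorphic discs that escape to the divisors $D^+_j$ at infinity. Let $u:(D^2,\partial D^2)\to(\overline{\mathfrak{M}}_{u,\lambda},L_b)$ be a holomorphic representative of an effective class $\beta$ with $\mu(\beta)=2$. By Proposition~\ref{prop:cptmaslov} we have $\mu(\beta)=2\,\beta\cdot[D]$ with $D=D^-+D^+$, and positivity of intersection gives $\beta\cdot D^\pm_i\ge 0$ for each of the $2d$ components; hence exactly one component, either $D^-_j$ or $D^+_j$, is hit, transversally and with multiplicity one. If $\beta\cdot D^-_j=1$, then $u$ misses $D^+$, and since $\overline{\mathfrak{M}}_{u,\lambda}\setminus D^+=\mathfrak{M}_{u,\lambda}$ the image of $u$ lies in $\mathfrak{M}_{u,\lambda}$; Proposition~\ref{prop:maslov2} applies verbatim and yields the first alternative in~\eqref{effdisc2}, with wall index set $\{m:h_m=j\}$.

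It remains to treat the case $\beta\cdot D^+_j=1$. Now $u$ misses $D^-$, so its image lies in $\overline{\mathfrak{M}}_{u,\lambda}\setminus D^-$, where $\rho_\infty$ (extending $\bar{\mu}_{\C}$) is defined; put $\bar u=\rho_\infty\circ u$. As in Proposition~\ref{prop:maslov2}, a winding-number argument applied to the coordinates $v_m=z_mw_m$ shows that $u$ cannot meet both $\mathcal{Z}_m$ and $\mathcal{W}_m$ for any $m$; since these divisors are not components of $D$, such intersections affect only the wall-crossing part of $\beta$ and not its Maslov index. This constrains $u(D^2)$ to lie inside one of the open sets $U'_{(I^+,I^-)}$, the compactified analogues of the charts~\eqref{conic_bundle}, indexed by splittings $I^+\sqcup I^-$ of the wall index set $\{m:u_m^{(j)}\ne 0 \text{ and } h_m\ne j\}$. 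Each $U'_{(I^+,I^-)}$ is biholomorphic to a trivial $(\C^\times)^d$-bundle over a chart of the reduced $(\CP^1)^d$ adapted to $D^+_j$, and that chart, once $D^-_j$ is removed, is again a copy of $\C^d$; the maximal principle then forces all components of $\bar u$ except the distinguished one meeting $D^+_j$ to be constant, and the distinguished component to be a degree-one map, unique up to reparametrization. Hence all such discs with image in a fixed $U'_{(I^+,I^-)}$ represent a single class $\beta'_{(I^+,I^-)}$, and every splitting is realized. To identify $\beta'_{(I^+,I^-)}$, I use the long exact homotopy sequence of the pair $(\overline{\mathfrak{M}}_{u,\lambda},L_b)$ together with the explicit GIT description in Section~\ref{sec:cptfy}: these show that $\pi_2(\overline{\mathfrak{M}}_{u,\lambda},L_b)$ is generated by $\{\beta^-_i\}$, $\{\beta^+_i\}$ and $\{\alpha_m\}$, and matching the intersection numbers of $\beta'_{(I^+,I^-)}$ against $D^\pm_i$, $\mathcal{Z}_m$ and $\mathcal{W}_m$ then forces $\beta'_{(I^+,I^-)}=\beta^+_j+\sum_k\delta_k\alpha_{j_k}$ with $\delta_k=1$ exactly when $\mathrm{sgn}(j_k)\ne\sigma(j_k)$, which is~\eqref{effdisc2}.

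The main obstacle is the single point where the argument genuinely differs from Proposition~\ref{prop:maslov2}: showing that the walls crossed by a $\beta^+_j$-disc are indexed by $\{m:u_m^{(j)}\ne 0,\ h_m\ne j\}$ rather than by $\{m:h_m=j\}$. This requires a careful local analysis of $\overline{\mathfrak{M}}_{u,\lambda}$ near $D^+_j$---deciding which of the divisors $\mathcal{Z}_m$, $\mathcal{W}_m$ (equivalently, which amoebas $W_m$) meet a neighborhood of $D^+_j$, and translating ``the path $v_j\to\infty$ leaving the chamber $\mathcal{C}_{\bm{h}}$ crosses $W_m$'' into the linear-algebraic condition $u_m^{(j)}\ne 0$ (so that $H_{\C,m}$ is not parallel to the $v_j$-direction) together with the tropical condition $h_m\ne j$ (so that $\mathcal{C}_{\bm{h}}$ lies on the finite side of that wall). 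The remaining ingredients---the $\pi_2$ computation for the partial compactification and the intersection-number bookkeeping---are routine given the toric presentation already set up in Section~\ref{sec:cptfy}.
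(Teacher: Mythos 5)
Your proposal is correct and follows essentially the same route as the paper's own (very terse) proof: use Proposition \ref{prop:cptmaslov} and positivity of intersection to force $\beta\cdot[D^\pm_j]=1$ for exactly one component, then classify via the local charts and maximal-principle argument of Proposition \ref{prop:maslov2}, applied to $\rho_0$ in the $D^-$ case and $\rho_\infty$ in the $D^+$ case. Your additional discussion of why the crossed walls in the $\beta^+_j$ case are indexed by $\{m: u_m^{(j)}\neq 0,\ h_m\neq j\}$ is a correct elaboration of a point the paper leaves implicit.
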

		
		\begin{proof}
		By Proposition \ref{prop:cptmaslov}, an effective disc class $\beta\in\pi_2(\overline{\mathfrak{M}}_{u,\lambda},L_b)$ with $\mu(\beta)=2$ must intersects either $D^-$ or $D^+$ with multiplicity $1$. In either case, we can classify the effective disc classes by using local charts as in Proposition \ref{prop:maslov2}.
		\end{proof}
		
		\begin{prop}
			The holomorphic discs representing classes in (\ref{effdisc2}) are Fredholm regular.
		\end{prop}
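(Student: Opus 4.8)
The plan is to transcribe the proof of Proposition~\ref{prop:regularity} almost verbatim, with the extended fibration $\rho:\overline{\mathfrak{M}}_{u,\lambda}\to(\CP^1)^d$ of Section~\ref{sec:cptfy} playing the role of $\bar{\mu}_{\C}$, splitting into two cases according to whether a class $\beta$ in (\ref{effdisc2}) carries a $\beta^-_j$ component or a $\beta^+_j$ component. In the first case, Proposition~\ref{prop:cptmaslov} gives $\beta\cdot[D]=1$ with the single intersection lying along $D^-_{j}$, so $\beta\cdot[D^+]=0$; by positivity of intersection a holomorphic representative of $\beta$ is then disjoint from $D^+$ and hence has image in $\mathfrak{M}_{u,\lambda}\subset\overline{\mathfrak{M}}_{u,\lambda}$, exactly as in the proof of Proposition~\ref{walls2}. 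Its Fredholm regularity is therefore already provided by Proposition~\ref{prop:regularity}.

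The only genuinely new case is when $\beta$ carries a $\beta^+_{j}$ component, so that a holomorphic representative $u$ meets the divisor at infinity $D^+_{j}$ once, transversally. Here I would set $L_{red}=\rho(L_b)$ and $u_{red}=\rho\circ u$, and first observe --- as in the proof of Proposition~\ref{prop:cptmaslov2}, using the local coordinates of Section~\ref{sec:cptfy} --- that $u(D^2)$ is contained in an open subset $\tilde{U}\subset\overline{\mathfrak{M}}_{u,\lambda}$ on which $T^n/K$ acts freely and which is biholomorphic to the trivial $(\C^{\times})^d$-bundle over a product of affine charts of $(\CP^1)^d$ (the charts $\{\tilde{\zeta}_k\neq 0\}$ or $\{\zeta_k\neq 0\}$ in each factor, chosen to contain the point over which the corresponding component of $L_{red}$ sits). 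On $\tilde{U}$ one has the analogues of the exact sequences (\ref{SES1}) and (\ref{SES2}),
\[
0\longrightarrow\mathcal{L}_{\C}\longrightarrow T\overline{\mathfrak{M}}_{u,\lambda}\longrightarrow\rho^*T(\CP^1)^d\longrightarrow 0,\qquad 0\longrightarrow\mathcal{L}_{\R}\longrightarrow TL_b\longrightarrow\rho^*TL_{red}\longrightarrow 0,
\]
with $\mathcal{L}_{\R},\mathcal{L}_{\C}$ the real and complex spans of the vector fields generating the $T^n/K$-action. Pulling these back by $u$, the bundle $E=u^*T\overline{\mathfrak{M}}_{u,\lambda}$ carries a trivial holomorphic subbundle $u^*\mathcal{L}_{\C}$ with trivial real boundary subbundle, so surjectivity of $\bar{\partial}$ on $(E,F)$ reduces --- just as in Proposition~\ref{prop:regularity} --- to surjectivity on the quotient $u_{red}^*T(\CP^1)^d$ with boundary values in $(u_{red}|_{\partial D^2})^*TL_{red}$. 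Since by the maximal principle only the $j$-th component of $u_{red}$ is nonconstant, this is a one-dimensional Riemann--Hilbert problem, now for the pullback of $T\CP^1$ along a disc meeting $\{\infty\}$ once transversally, and it is settled by Theorems II and III of \cite{Oh}.

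The step I expect to cost the most (minor) effort is the bookkeeping hidden in the phrase ``$u(D^2)$ is contained in such a trivialization $\tilde{U}$'': one must check that the extended fibration $\rho$ looks, near $D^+$, just like $\bar{\mu}_{\C}$ does near $D^-$, i.e. that the GIT presentation $\overline{\mathfrak{M}}_{u,\lambda}=V//_{\lambda_{\R}}K_{\C}$ together with the gluing $z_i\tilde{\zeta}_i=\zeta_i\tilde{z}_i$ does produce, over each boundary chart of $(\CP^1)^d$, a trivial $(\C^{\times})^d$-bundle with free residual torus action. Once this is verified, the rest of the argument is a verbatim copy of the proof of Proposition~\ref{prop:regularity}.
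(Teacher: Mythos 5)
Your proposal is correct and follows essentially the same route as the paper: the paper also reduces regularity, via the free residual torus action, to regularity of the composition with $\rho_0$ (when $\beta\cdot[D^-]=1$) or $\rho_{\infty}$ (when $\beta\cdot[D^+]=1$), which is the same one-dimensional Riemann--Hilbert problem settled by Theorems II and III of \cite{Oh}. Your handling of the $\beta^-_j$ case by noting the disc avoids $D^+$ and lies in $\mathfrak{M}_{u,\lambda}$ is only a cosmetic variant of the paper's use of $\rho_0$.
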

		
		\begin{proof}
		Let $u:(D^2,\partial D^2)\to (\overline{\mathfrak{M}}_{u,\lambda},L_b)$ be a holomorphic disc representing $\beta$ in (\ref{effdisc2}). By the same argument as in \ref{prop:regularity}, regularity of $u$ is equivalent to regularity of $\rho_0\circ u$ if $\beta\cdot [D^{-}]=1$ and of $\rho_{\infty}\circ u$ if $\beta\cdot [D^{+}]=1$, which is then a one-dimensional Riemann-Hilbert problem and  follows from Theorem II and III of \cite{Oh}.
		\end{proof}		
		
		\begin{prop}
			\label{prop:gw2}
			With the notations as in Proposition \ref{prop:cptmaslov2}, we have
			\[
			n_{\beta}=\left\{
			\begin{array}{ll}
			1  & \mbox{for } \beta=\beta^{\pm}_{j}+\delta_1\alpha_{j_1}+\ldots+\delta_N\alpha_{j_N},\\
			0 & \mbox{otherwise}.
			\end{array}
			\right.
			\]
		\end{prop}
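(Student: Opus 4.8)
The plan is to follow the proof of Proposition \ref{prop:gw} almost verbatim, the only genuinely new feature being the divisor $D^+$. First I would note that $n_\beta = 0$ unless $\mu(\beta) = 2$ for dimension reasons, and that by Proposition \ref{prop:cptmaslov} a class with $\mu(\beta) = 2$ satisfies $\beta \cdot [D] = 1$, where $D = D^- + D^+$. Since $D^-$ and $D^+$ are complex divisors and any holomorphic representative $u$ has boundary in $L_b \subset \mathfrak{M}_{u,\lambda}$, which is disjoint from $D$, positivity of intersection forces $u$ to meet exactly one component of $D$ --- either some $D^-_j$ or some $D^+_j$ --- transversally in a single point, and to be disjoint from every other component.

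In the case $\beta \cdot [D^-_j] = 1$ we have $\beta \cdot [D^+] = 0$, so by positivity $u(D^2) \subset \overline{\mathfrak{M}}_{u,\lambda} \setminus D^+ = \mathfrak{M}_{u,\lambda}$. Hence the class is represented entirely inside $\mathfrak{M}_{u,\lambda}$, the moduli space $\mathcal{M}_1(L_b, \beta)$ and its orientation coincide with those computed in $\mathfrak{M}_{u,\lambda}$, and the equality $n_\beta = 1$ for $\beta = \beta^-_j + \delta_1\alpha_{j_1} + \ldots + \delta_N\alpha_{j_N}$ (and $n_\beta = 0$ for the other $D^-$-classes) follows directly from Proposition \ref{prop:gw} combined with the classification in Proposition \ref{prop:cptmaslov2}.

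The case $\beta \cdot [D^+_j] = 1$ is where I would do real work. Here $u(D^2) \subset \overline{\mathfrak{M}}_{u,\lambda} \setminus D^-$, the domain of $\rho_\infty$. Because the reference fiber lies in the interior of a chamber ($b \notin W_i$ for all $i$), the local-chart analysis of Proposition \ref{prop:cptmaslov2} places $L_b$ and $u(D^2)$ in an open set biholomorphic to a trivial $(\C^\times)^d$-bundle over $\C^d$ --- the chart of $\overline{\mathfrak{M}}_{u,\lambda}$ ``at infinity'' in the $j$-th coordinate. The key step is to check that, by the construction of $\overline{\mathfrak{M}}_{u,\lambda}$ out of the bundle $E$ and the coordinate change $z_i \tilde\zeta_i = \zeta_i \tilde z_i$ (the same one used to extend $\bar\Omega$ in Proposition \ref{prop:cptmaslov}), this chart is isomorphic, as a $(\C^\times)^d$-bundle carrying a $T^d$-action and the volume form $\bar\Omega$, to the chart around $D^-_j$ that appears in the proof of Proposition \ref{prop:gw}. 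Granting this, that proof transcribes word for word with $\rho_0$ replaced by $\rho_\infty$: only the $j$-th component of $\bar u = \rho_\infty \circ u$ is nonconstant and it is unique up to reparametrization, so $\mathrm{ev} : \mathcal{M}_1(L'_{red}, \bar\beta) \to L'_{red}$ is a diffeomorphism; the $T^d$-action makes $\mathcal{M}_1(L_b, \beta) \to \mathcal{M}_1(L'_{red}, \bar\beta)$ a $T^d$-bundle with $T^d$-equivariant, hence diffeomorphic, evaluation map to $L_b$; and choosing the spin structure on $L_b$ standard along the $T^d$-orbits and compatible with the standard one on $L'_{red}$ under the analogue of the splitting (\ref{SES2}) makes $\mathrm{ev}$ orientation-preserving. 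Thus $n_\beta = 1$ for $\beta = \beta^+_j + \delta_1\alpha_{j_1} + \ldots + \delta_N\alpha_{j_N}$ and $n_\beta = 0$ for every remaining class, by Proposition \ref{prop:cptmaslov2}.

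I expect the main obstacle to be exactly this identification of the chart ``at infinity'' with the chart around $D^-_j$ as $T^d$-spaces equipped with their meromorphic volume forms; once that symmetry is spelled out, the orientation and spin-structure bookkeeping is identical to that in Proposition \ref{prop:gw} and nothing further is needed.
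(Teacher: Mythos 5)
Your proposal is correct and follows essentially the same route as the paper: the paper's proof simply says the argument of Proposition \ref{prop:gw} carries over verbatim, using the $T^d$-bundle $\mathcal{M}_1(L_b,\beta)\to\mathcal{M}_1(\rho_0(L_b),(\rho_0)_*\beta)$ or $\mathcal{M}_1(L_b,\beta)\to\mathcal{M}_1(\rho_\infty(L_b),(\rho_\infty)_*\beta)$ according to whether $\beta\cdot[D^-]=1$ or $\beta\cdot[D^+]=1$, which is exactly your case split. The chart-at-infinity identification you flag as the main obstacle is indeed the implicit content of the paper's ``identical'' claim, and it holds by the symmetric construction of $\overline{\mathfrak{M}}_{u,\lambda}$ via the gluing $z_i\tilde{\zeta}_i=\zeta_i\tilde{z}_i$.
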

		
		\begin{proof}
			The proof is identical to that of Proposition \ref{prop:gw} except we have $T^d$-bundles $\mathcal{M}_1(L_b,\beta)\to\mathcal{M}_1(\rho_0(L_b),(\rho_0)_*(\beta))$ and $\mathcal{M}_1(L_b,\beta)\to\mathcal{M}_1(\rho_{\infty}(L_b),(\rho_{\infty})_*(\beta))$ depending on whether $\beta\cdot [D^{-}]=1$ or $\beta\cdot [D^{+}]=1$.
		\end{proof}
	
Similar to Proposition \ref{prop:unobstructed}, we have
			\begin{prop}
		$L_b$ is weakly unobstructed.
	\end{prop}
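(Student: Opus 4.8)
The plan is to reproduce the proof of Proposition~\ref{prop:unobstructed} almost word for word, substituting the compactified versions of the inputs. Recall that $\fm_0^b$ is by definition the sum, over effective disc classes $\beta\in\pi_2(\overline{\mathfrak{M}}_{u,\lambda},L_b)$, of $\mathrm{ev}_*[\mathcal{M}_1(L_b,\beta)]^{\mathrm{vir}}$ weighted by $T^{-\int_\beta\omega}$; weak unobstructedness is precisely the assertion that this element of $H_*(L_b)$ is a (Novikov) scalar multiple of the unit $[L_b]$.

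\emph{Step 1: degree count.} Since $\dim_\R\overline{\mathfrak{M}}_{u,\lambda}=4d$ and $\dim L_b=2d$, the chain $\mathrm{ev}_*[\mathcal{M}_1(L_b,\beta)]^{\mathrm{vir}}$ lives in degree $2d+\mu(\beta)-2$, so only classes with $\mu(\beta)\le 2$ can contribute to $\fm_0^b$. By Proposition~\ref{prop:cptmaslov}, $\mu(\beta)=2\,\beta\cdot[D]$ is even; and since $D=D^-+D^+$ is an effective (anti-canonical) divisor disjoint from $L_b$, positivity of intersection forces $\beta\cdot[D]\ge 0$ for every effective disc class, so there are no stable discs of negative Maslov index, exactly as in the additive case.

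\emph{Step 2: isolating Maslov $2$.} The only stable discs of Maslov index $0$ are the constant ones, and these are not stable with a single boundary marked point — we are taking $b$ inside a chamber, hence $b\notin\bm{W}$, so by Proposition~\ref{walls2} the fiber $L_b$ bounds no nonconstant holomorphic disc of Maslov index $0$. Thus only the Maslov-$2$ classes enter $\fm_0^b$, and these were classified in Proposition~\ref{prop:cptmaslov2}. For each such $\beta$, the Fredholm regularity proved just above and Proposition~\ref{prop:gw2} show that $\mathcal{M}_1(L_b,\beta)$ is a genuine compact manifold and that the evaluation map $\mathrm{ev}\colon\mathcal{M}_1(L_b,\beta)\to L_b$ is an orientation-preserving diffeomorphism, so $\mathrm{ev}_*[\mathcal{M}_1(L_b,\beta)]=[L_b]$ (this is also forced a priori, since a top-degree cycle in the connected torus $L_b$ is a multiple of $[L_b]$).

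\emph{Step 3: conclusion.} Summing over the finitely many effective Maslov-$2$ classes,
\[
\fm_0^b=\Bigg(\sum_{\substack{\beta\ \text{effective}\\ \mu(\beta)=2}} T^{-\int_\beta\omega}\Bigg)[L_b],
\]
which is Poincar\'e dual to a multiple of the fundamental class; hence $L_b$ is weakly unobstructed. The one point that genuinely needs the partial compactification to be cooperative — just as in Proposition~\ref{prop:unobstructed} — is the vanishing of negative Maslov index contributions: one must know that no bubbled configuration with a sphere component contained in $D$ can have total Maslov index $<2$. This is controlled by applying the Maslov index formula of Proposition~\ref{prop:cptmaslov} componentwise (a nonconstant disc component meets $D$ nonnegatively since its boundary lies on $L_b\cap D=\emptyset$, and a sphere component not contained in $D$ meets it nonnegatively by positivity), together with the fact that Proposition~\ref{prop:cptmaslov2} already exhausts the effective Maslov-$2$ classes, so that any further bubbling only increases $\beta\cdot[D]$. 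Everything else is identical to the proof of Proposition~\ref{prop:unobstructed}.
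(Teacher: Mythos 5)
Your proof is correct and follows essentially the same route as the paper: the paper proves the uncompactified case (degree reasons restrict to $\mu\le 2$, no negative or nonconstant Maslov $0$ contributions for $b$ away from the walls, and the Maslov $2$ moduli spaces evaluate diffeomorphically onto $L_b$ by Proposition \ref{prop:gw2}, so $\fm_0^b$ is proportional to the fundamental class) and simply declares the compactified case analogous, which is exactly the substitution you carry out with Propositions \ref{prop:cptmaslov}, \ref{walls2}, \ref{prop:cptmaslov2} and \ref{prop:gw2}. Your closing remark on sphere bubbles contained in $D$ is a reasonable extra precaution and is at the same level of detail as the paper's own treatment.
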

		
		\subsection{Generating functions of open Gromov--Witten invariants and wall-crossing}
		\label{sec:GF}
		
		Denote by $\mathfrak{M}_0^{\vee}$ the semi-flat mirror of $\mathfrak{M}_{u,\lambda}$(see Section \ref{sec:SYZ}). The semi-flat complex coordinates on $\mathfrak{M}_0^{\vee}$ is defined as follows.
		
		For each simply connected affine chart $B^0_{\sigma}$, we have an open subset $\mathfrak{M}_{0,\sigma}^{\vee}=(\pi^\vee)^{-1}(B^0_{\sigma})\subset\mathfrak{M}_0^{\vee}$. For each $\sigma$, we fix a reference point $b_{\sigma}\in B^0_{\sigma}$. Let $\{\gamma_{\sigma,1},\ldots,\gamma_{\sigma,2d}\}\subset H_1(L_{b_{\sigma}})$ be the cycles described in Section \ref{sec:disc2} and note that they form a primitive integer basis. 
		
		\begin{definition}
		\label{def:semiflatcoord}
		The semi-flat complex coordinates on $\mathfrak{M}_0^{\vee}$ is defined locally on the charts $\mathfrak{M}_{0,\sigma}^{\vee}$ by
		\[
		Z_{\sigma,i}(L_{b},\nabla)=\exp\left(-\int_{\Gamma_{\sigma,i}(b)}\bar{\omega}\right)\mathrm{Hol}_{\nabla}(\gamma_{\sigma,i}(b)),\quad i=1,\ldots,2d,
		\]
		where $\gamma_{\sigma,i}(b)\in H_1(L_{b},\Z)$ is the parallel transport of $\gamma_{\sigma,i}$, and $\Gamma_{\sigma,i}(b)$ is the cylinder given by parallel-transporting $\gamma_{\sigma,i}$. 
        \end{definition}
    
    The transition map between the charts $\mathfrak{M}_{0,\sigma}^{\vee}$ and $\mathfrak{M}_{0,\sigma'}^{\vee}$ is given by (exponential of) the integral affine transformation between $B^0_{\sigma}$ and $B^0_{\sigma'}$. 		
		\begin{definition} 
			\label{def:gen}
			The generating functions $\bm{u}_j$ (resp. $\bm{v}_j$) for discs emanated from boundary divisors $D^-_{j}$ (resp. $D^+_{j}$) for $j=1,\ldots,d$, are given by
			\[
			\bm{u}_j(L_b,\nabla)= \sum_{\substack{\beta \in \pi_2(X,L_b) \\ \beta \cdot D^-_{j} = 1, \beta\pitchfork D}} n_\beta \exp\left(-\int_{\beta}\bar{\omega}\right)\mathrm{Hol}_{\nabla}(\partial \beta),
			\]
			\[
			\bm{v}_j(L_b,\nabla)= \sum_{\substack{\beta \in \pi_2(X,L_b) \\ \beta \cdot D^+_{j} = 1, \beta\pitchfork D}} n_\beta \exp\left(-\int_{\beta}\bar{\omega}\right)\mathrm{Hol}_{\nabla}(\partial \beta).
			\]
		\end{definition} 
		
		Let $C_{\sigma,i}=\exp\left(-\int_{\beta^-_{i}}\bar{\omega}\right)$ and $C_{\sigma,d+i}=\exp\left(-\int_{\alpha_i}\bar{\omega}\right)$ for $i=1,\ldots,d$, where $\beta^-_{i},\alpha_i\in H_2(\overline{\mathfrak{M}}_{u,\lambda},L_{b_{\sigma}})$ are as descried in Section \ref{sec:disc2}. Since the cycles $\gamma_{\sigma,d+1},\ldots,\gamma_{\sigma,2d}\in H_1(L_b;\Z)$ are monodromy-invariant, we have $C_{\sigma,d+i}Z_{\sigma,d+i}=C_{\sigma',d+i}Z_{\sigma',d+i}$ on $\mathfrak{M}_{0,\sigma}^{\vee}\medcap\mathfrak{M}_{0,\sigma'}^{\vee}$ for any pair $\sigma,\sigma'$ of sign vectors. Thus, $\bm{Z}_{i}:=C_{\sigma,d+i}Z_{\sigma,d+i}$ are global holomorphic functions on $\mathfrak{M}_0^{\vee}$. 
		
		Let $S_{\ell}$ be the circuits corresponding to the relation $u_{\ell}=\sum_{i=1}^d a_{\ell i}u_i$ for $\ell=d+1,\ldots,n$. We have Kähler parameters $q^{\beta_{S_{\ell}}}$ associated to the primitive curve classes $\beta_{S_{\ell}}$ (see Section \ref{sec:circuits}). Let \[\bm{Z}_{\ell}:=q^{\beta_{S_{\ell}}}\prod_{i=1}^d Z_{d+i}^{u_{\ell}^{(i)}}=q^{\beta_{S_{\ell}}}\prod_{i=1}^d Z_{d+i}^{a_{\ell i}}.\] 
		The generating functions can be expressed locally in term of semi-flat complex coordinates as follows. 
		
		\begin{prop}
		For $j=1,\ldots,d$, denote by $\bm{j}$ be the collection of all $k\in\{1,\ldots,n\}$ such that $u_k^{(j)}\ne 0$. On the open subset $(\pi^\vee)^{-1}(B^0_{\sigma}\cap \mathcal{C}_{\bm{h}})\subset\mathfrak{M}_0^{\vee}$, we have
		\[
		\bm{u}_j=C_{\sigma,j}Z_{\sigma,j}(1+\bm{Z}_{j_1})\ldots(1+\bm{Z}_{j_N}),
		\]
		where $j_1,\ldots,j_N$ are the set of indices such that $h_{j_i}=j$, and
		\[
		\bm{v}_j=\exp\left(\int_{\CP^1_{j}}-\bar{\omega}\right)C_{\sigma,j}^{-1}Z_{\sigma,j}^{-1}\left(\prod_{k\in \bm{j}\setminus \{j_1,\ldots,j_N\}}1+\bm{Z}_{k}\right).
		\]
		where $\CP^1_{j}$ is the holomorphic sphere obtained from gluing $\beta^-_{j}$ and $\beta^+_{j}$ in $H_2(\overline{\mathfrak{M}}_{u,\lambda},L_{b_{\sigma}})$.
		\end{prop}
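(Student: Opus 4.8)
The plan is to feed the classification of effective Maslov-index-$2$ disc classes (Proposition \ref{prop:cptmaslov2}) and the disc count (Proposition \ref{prop:gw2}) into the definition of the generating functions (Definition \ref{def:gen}), factor the resulting finite sum, and rewrite each factor using the semi-flat coordinates of Definition \ref{def:semiflatcoord}. Fix the reference point $b_\sigma\in B^0_\sigma$ and a point $b\in B^0_\sigma\cap\mathcal{C}_{\bm{h}}$, so that $b\notin W_i$ for every $i$.

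By Propositions \ref{prop:cptmaslov2} and \ref{prop:gw2}, the classes contributing to $\bm{u}_j$ — those with $\beta\cdot D^-_j=1$, $\beta\pitchfork D$ (automatic from the explicit chart description of such discs), and $n_\beta\ne 0$ — are precisely $\beta=\beta^-_j+\sum_{k=1}^N\delta_k\alpha_{j_k}$ with $\delta_k\in\{0,1\}$ and $\{j_1,\ldots,j_N\}=\{k:h_k=j\}$, each with $n_\beta=1$. Since $\beta\mapsto\int_\beta\bar\omega$ is additive and $\partial\beta\mapsto\mathrm{Hol}_\nabla(\partial\beta)$ multiplicative, summing over $\{0,1\}^N$ factors as
\[
\bm{u}_j=\exp\left(-\int_{\beta^-_j}\bar\omega\right)\mathrm{Hol}_\nabla(\partial\beta^-_j)\prod_{k=1}^N\left(1+\exp\left(-\int_{\alpha_{j_k}}\bar\omega\right)\mathrm{Hol}_\nabla(\partial\alpha_{j_k})\right).
\]
It remains to identify the factors. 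Concatenating the reference disc of class $\beta^-_j$ over $b_\sigma$ with the parallel-transport cylinder $\Gamma_{\sigma,j}(b)$ represents $\beta^-_j$ over $b$, so the prefactor equals $C_{\sigma,j}Z_{\sigma,j}$; the same argument applied to $\alpha_{j_k}$ with $j_k\le d$ gives the factor $C_{\sigma,d+j_k}Z_{\sigma,d+j_k}=\bm{Z}_{j_k}$, while for $j_k>d$ one uses the circuit relation $[\alpha_{j_k}]=\beta_{S_{j_k}}+\sum_{i=1}^d a_{j_k i}[\alpha_i]$ in $H_2(\overline{\mathfrak{M}}_{u,\lambda},L_b)$ to rewrite the factor as $q^{\beta_{S_{j_k}}}\prod_i Z_{d+i}^{a_{j_k i}}=\bm{Z}_{j_k}$. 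This gives the asserted formula for $\bm{u}_j$.

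The computation of $\bm{v}_j$ proceeds identically, with two changes. By Proposition \ref{prop:cptmaslov2} the contributing classes are $\beta^+_j+\sum_k\delta_k\alpha_{j_k}$ with $\{j_1,\ldots,j_N\}$ now equal to $\{k\in\bm{j}:h_k\ne j\}=\bm{j}\setminus\{k:h_k=j\}$; and since $[\beta^-_j]+[\beta^+_j]=[\CP^1_j]$ with $\partial\beta^+_j=-\partial\beta^-_j=-\gamma_{\sigma,j}$, the prefactor becomes $\exp\left(-\int_{\CP^1_j}\bar\omega\right)(C_{\sigma,j}Z_{\sigma,j})^{-1}$, yielding the displayed expression for $\bm{v}_j$.

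I expect the only genuine bookkeeping obstacle to be the passage through the circuits for the walls $W_\ell$ with $\ell>d$: one must verify that $[\alpha_\ell]=\beta_{S_\ell}+\sum_i a_{\ell i}[\alpha_i]$ holds with the sign convention fixed in Section \ref{sec:circuits} (so that $\hat\lambda_\R(\beta_{S_\ell})>0$ and $q^{\beta_{S_\ell}}$ enters with exponent $+1$), after which the rest is a routine repackaging of symplectic areas and holonomies. I would also record that, although the formulas are written in the chart $\mathfrak{M}^\vee_{0,\sigma}$, they are compatible with the transition functions between the charts, which are exactly the integral-affine monodromy transformations of $B^0$.
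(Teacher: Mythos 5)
Your proposal is correct and follows essentially the same route as the paper, whose proof simply cites Propositions \ref{prop:cptmaslov2} and \ref{prop:gw2} together with Definitions \ref{def:semiflatcoord} and \ref{def:gen} (and \cite[Proposition 4.39]{CLL}); you have just written out the factorization over $\delta\in\{0,1\}^N$ and the identification of the factors with $C_{\sigma,j}Z_{\sigma,j}$, $\bm{Z}_{j_k}$ and the sphere class $\CP^1_j$ explicitly. The circuit bookkeeping for $\ell>d$ that you flag is exactly how the paper's definition of $\bm{Z}_\ell$ via $q^{\beta_{S_\ell}}\prod_i Z_{d+i}^{a_{\ell i}}$ is meant to be used, so no gap remains.
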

	    
		\begin{proof}
			This follows from Propositions \ref{prop:cptmaslov2}, \ref{prop:gw2} and Definitions \ref{def:semiflatcoord},\ref{def:gen} (see also \cite[Proposition 4.39]{CLL}).
		\end{proof}

		\subsection{SYZ mirror and its resolution} \label{sec:mirror}
		Set $q^{\beta_{S_{\ell}}}=\exp\left(-\int_{\beta_{S_{\ell}}}\bar{\omega}\right)$. Since the curve classes $\beta_{S_{\ell}}$ are contained in $\mathfrak{M}_{u,\lambda}$, we can rescale $\bar{\omega}$ such that $\exp\left(-\int_{\beta_{S_{\ell}}}\bar{\omega}\right)=\exp\left(-\int_{\beta_{S_{\ell}}}\omega\right)$. By Definition \ref{def:SYZ}, an SYZ mirror is given by $\mathrm{Spec}(R)$ where $R$ is the subring of coordinate ring on $(\pi^{\vee})^{-1}(B^0\setminus\bigcup_i^n W_i)$ generated by the functions $\bm{u}_i$ and $\bm{v}_i$ for $i=1,\ldots,d$.  By combining the above propositions, we obtain the following.
		
		\begin{theorem} \label{thm:SYZmir}
			An SYZ mirror of  $\mathfrak{M}_{u,\lambda}- D^{-}$ is
			\[
			\mathfrak{M}_{u,\lambda}^{\vee}=\left\{((\bm{u}_1,\bm{v}_1,\ldots,\bm{u}_d,\bm{v}_d), (\bm{Z}_{1},\ldots,\bm{Z}_{d}))\in\C^{2d}\times (\C^{\times})^d|\bm{u}_j\bm{v}_j=\prod_{k\in \bm{j}}(1+\bm{Z}_k), j=1,\ldots,d\right\}.
			\]
		\end{theorem}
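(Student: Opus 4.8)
The plan is to substitute the chamberwise formulas for the generating functions from Section~\ref{sec:GF} into the products $\bm{u}_j\bm{v}_j$, obtain one globally defined relation for each $j$, and then check that these $d$ relations present the coordinate ring of the SYZ mirror. Recall the setup: over each chamber chart $(\pi^\vee)^{-1}(B^0_\sigma\cap\mathcal{C}_{\bm{h}})\cong(\C^\times)^{2d}$ the functions $\bm{u}_j,\bm{v}_j$ are explicit Laurent monomials in the semi-flat coordinates times products $\prod(1+\bm{Z}_k)$, the wall-crossing transformations are precisely those making $\bm{u}_j,\bm{v}_j$ match across walls, and the coordinates $\bm{Z}_1,\ldots,\bm{Z}_d$ (from the monodromy-invariant cycles $\gamma_{\sigma,d+1},\ldots,\gamma_{\sigma,2d}$) are already global, $\C^\times$-valued functions on $\mathfrak{M}_0^\vee$. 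Thus the construction produces a morphism from the glued mirror $\mathfrak{M}^\vee$ to $\C^{2d}\times(\C^\times)^d$ via $(\bm{u}_1,\bm{v}_1,\ldots,\bm{u}_d,\bm{v}_d,\bm{Z}_1,\ldots,\bm{Z}_d)$, and it suffices to identify its image with $\mathfrak{M}^\vee_{u,\lambda}$ and the induced map of coordinate rings with an isomorphism.

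The core computation is the product on a fixed chart. Plugging in the formulas, the semi-flat factor $C_{\sigma,j}Z_{\sigma,j}$ of $\bm{u}_j$ cancels the factor $C_{\sigma,j}^{-1}Z_{\sigma,j}^{-1}$ of $\bm{v}_j$, while the disc-correction products — over $\{k\in\bm{j}:h_k=j\}$ in $\bm{u}_j$ and over the complement $\{k\in\bm{j}:h_k\neq j\}$ in $\bm{v}_j$ — multiply to $\prod_{k\in\bm{j}}(1+\bm{Z}_k)$; here one uses that $h_k=j$ forces $k\in\bm{j}$, since $H_k$ has a $\tau_j$-monomial only when $u_k^{(j)}\neq 0$. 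The leftover constant $e^{-\int_{\CP^1_j}\bar{\omega}}$ is absorbed by the harmless rescaling $\bm{v}_j\mapsto e^{\int_{\CP^1_j}\bar{\omega}}\bm{v}_j$. Because $\bm{Z}_\ell=q^{\beta_{S_\ell}}\prod_i\bm{Z}_i^{a_{\ell i}}$ for $\ell>d$ and $\bm{Z}_1,\ldots,\bm{Z}_d$ are global, the relation $\bm{u}_j\bm{v}_j=\prod_{k\in\bm{j}}(1+\bm{Z}_k)$ is independent of the chart, hence holds globally, with the $q^{\beta_{S_\ell}}$ equal to the Kähler parameters of $\mathfrak{M}_{u,\lambda}$ after the rescaling of $\bar{\omega}$ made just before the theorem. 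So the morphism factors through the closed subvariety $\mathfrak{M}^\vee_{u,\lambda}\subset\C^{2d}\times(\C^\times)^d$.

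It then remains to see this is an isomorphism of coordinate rings. On each chart, $\bm{Z}_i$ recover the coordinates $Z_{\sigma,d+i}$, and on the dense locus where $\prod_{k\in\bm{j},\,h_k=j}(1+\bm{Z}_k)\neq0$ (resp. where the complementary product is nonzero) $\bm{u}_j$ (resp. $\bm{v}_j$) recovers $Z_{\sigma,j}$; so each chart maps as an open immersion into $\mathfrak{M}^\vee_{u,\lambda}$, and together with the charts for the other chambers these cover the complement of a closed subset of codimension $\geq2$. On the other hand $\mathfrak{M}^\vee_{u,\lambda}$ is the complete intersection of the $d$ polynomials $\bm{u}_j\bm{v}_j-\prod_{k\in\bm{j}}(1+\bm{Z}_k)$, each irreducible (degree one in $\bm{u}_j$, coefficients coprime), hence irreducible and normal of dimension $2d=\dim_{\C}\mathfrak{M}_{u,\lambda}$; so the coordinate ring of $\mathfrak{M}^\vee_{u,\lambda}$ equals $H^0(\mathfrak{M}^\vee,\mathcal{O})$ by normality, and is therefore the ring generated by $\bm{u}_1,\ldots,\bm{v}_d$ and $\bm{Z}_1^{\pm1},\ldots,\bm{Z}_d^{\pm1}$, namely $\C[\bm{u}_j,\bm{v}_j,\bm{Z}_i^{\pm1}]/(\bm{u}_j\bm{v}_j-\prod_{k\in\bm{j}}(1+\bm{Z}_k))$. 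Finally $D^-$ is the pole divisor of the volume form $\Omega$ of Lemma~\ref{lemma:maslov}, hence anti-canonical, so Definition~\ref{def:SYZ} applies to the pair $(\mathfrak{M}_{u,\lambda},D^-)$.

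The step I expect to be the main obstacle is this last identification: showing the glued mirror carries no further relations or extra global functions beyond the displayed ones, equivalently that it is exactly the stated complete intersection. This rests on the genericity assumptions — simplicity of $\mathcal{H}_\C$ and the choice of the $|b_\ell|$ as distinct powers of $t$, which make the tropical arrangement $\mathcal{H}$ simple so that the chamber combinatorics behind the formulas for $\bm{u}_j,\bm{v}_j$ is exactly as stated — together with the irreducibility and dimension count. By comparison the product computation is routine bookkeeping; the only care needed is to verify that the two index sets appearing in $\bm{u}_j$ and in $\bm{v}_j$ are complementary within $\bm{j}$, and to absorb the sphere-area constant.
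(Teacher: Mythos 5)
Your proposal takes essentially the same route as the paper: its proof is precisely to combine the chamberwise expressions for $\bm{u}_j,\bm{v}_j$ from Section \ref{sec:GF} (so that $\bm{u}_j\bm{v}_j=\exp\left(-\int_{\CP^1_j}\bar{\omega}\right)\prod_{k\in\bm{j}}(1+\bm{Z}_k)$, with the sphere constant absorbed by rescaling and the $\bm{Z}_i$ already global) with Definition \ref{def:SYZ}, exactly your core computation, including the observation that $\{k : h_k=j\}\subset\bm{j}$ makes the two index sets complementary. Your extra verification that the generated ring is the coordinate ring of the displayed complete intersection (irreducibility, normality, codimension-two coverage by chamber charts) goes beyond what the paper records, which simply cites the preceding propositions, but it does not alter the approach.
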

		For simplicity we have rescaled the variables $\bm{u}_i$ so that the constant terms $\exp\left(-\int_{\CP^1_{j}}\bar{\omega}\right)$ for $j=1,\ldots,d$ do not appear in the above expression.
		\begin{example}
		An SYZ mirror of $T^*\CP^2$ is the subvariety of $\C^4\times (\C^{\times})^2$ given by
			\begin{align*}
			\bm{u}_1\bm{v}_1=(1+\bm{Z}_1)(1+q^{\beta_{S_3}}\bm{Z}_1^{-1}\bm{Z}_2^{-1});\\
			\bm{u}_2\bm{v}_2=(1+\bm{Z}_2)(1+q^{\beta_{S_3}}\bm{Z}_1^{-1}\bm{Z}_2^{-1}).
			\end{align*}
			Note that this subvariety is singular at the one-dimensional loci $\{\bm{Z}_1=-1, \bm{Z}_2=q^{\beta_{S_3}}, \bm{u}_1=\bm{v}_1=0\}$ and $\{\bm{Z}_2=-1, \bm{Z}_1=q^{\beta_{S_3}}, \bm{u}_2=\bm{v}_2=0\}$.
		\end{example}

In general $\mathfrak{M}_{u,\lambda}^{\vee}$ is singular. The wall and chamber structure of the Lagrangian torus fibration explained in Section \ref{sec:chambers} gives a resolution of $\mathfrak{M}_{u,\lambda}^{\vee}$, provided that $\mathfrak{M}_{u,\lambda}$ is smooth. In the following we construct this resolution.  The construction can be justified by Lagrangian Floer theory of immersed Lagrangians which is explained in \cite{HL,HKL18} (see also \cite{seidel97} and \cite{PT17} for more Floer theoretical aspects on gluing the chambers).  We will study more about Lagrangian Floer theory in future work.  In the following we glue up the resolution from local charts by hand.

\textbf{Step 1.} First we glue the charts corresponding to smooth torus fibers by wall-crossing functions.  Recall that we have a collection of tropical hyperplanes which divide the base into chambers (see Figure \ref{fig:chambers} and Section \ref{sec:chambers} for the labels).
For each chamber $\mathcal{C}_{\bm{h}}$, we define a chart $U_{\bm{h}}\cong (\C^{\times})^{d}\times (\C^{\times})^{d}$ by
\[
U_{\bm{h}}=\left\{\left((\bm{u}^{(\bm{h})}_1,\bm{v}^{(\bm{h})}_1,\ldots,\bm{u}^{(\bm{h})}_d,\bm{v}^{(\bm{h})}_d),(\bm{Z}_1,\ldots,\bm{Z}_d)\right)\in (\C^{\times})^{2d}\times (\C^{\times})^{d}\bigr|\bm{u}^{(\bm{h})}_i\bm{v}^{(\bm{h})}_i=1, i=1,\ldots,d\right\}.
\]

Consider a pair of chambers $\mathcal{C}_{\bm{h}}$ and $\mathcal{C}_{\bm{h}'}$ where $\bm{h}=(h_1,\ldots,h_n)$ and $\bm{h}'=(h'_1,\ldots,h'_n)$. For $j=1,\ldots,d$, let $J_{j,\bm{h},\bm{h}'}$ be the set of all indices $k\in\{1,\ldots,n\}$ such that $h_k\ne h'_k$ and either $h_k=j$ or $h'_k=j$.  These indices label the hyperplanes which give walls in between the two chambers involving the $j$-th direction.

Let $U_{\bm{h},\bm{h}'}\subset U_{\bm{h}}$ be the open subset defined by 
\[
U_{\bm{h},\bm{h}'}=\left\{\left((\bm{u}^{(\bm{h})}_1,\bm{v}^{(\bm{h})}_1,\ldots,\bm{u}^{(\bm{h})}_d,\bm{v}^{(\bm{h})}_d),(\bm{Z}_1,\ldots,\bm{Z}_d)\right)\in U_{\bm{h}}\bigr|1+\bm{Z}_k\ne 0  \textrm{ for all } k\in\bigcup_{j=1}^d J_{j,\bm{h},\bm{h}'}\right\}.
\]
Let $\delta^{(\bm{h},\bm{h}')}_j=\delta^{(\bm{h}',\bm{h})}_j=0$ if $J_{j,\bm{h},\bm{h}'}=\emptyset$. Let $\delta^{(\bm{h},\bm{h}')}_j=1$ and $\delta^{(\bm{h}',\bm{h})}_j=0$ if there exist(and hence for all) $k\in J_{j,\bm{h},\bm{h}'}$ such that $h'_k=j$. Let $\delta^{(\bm{h},\bm{h}')}_j=0$ and $\delta^{(\bm{h}',\bm{h})}_j=1$ if there exist $k\in J_{j,\bm{h},\bm{h}'}$ such that $h_k=j$. We glue $U_{\bm{h}}$ and $U_{\bm{h}'}$ via the biholomorphism $\psi_{\bm{h},\bm{h}'}:U_{\bm{h},\bm{h}'}\to U_{\bm{h}',\bm{h}}$ defined by
\[
\bm{u}^{(\bm{h}')}_j=\bm{u}^{(\bm{h})}_j\left(\prod_{k\in J_{j,\bm{h},\bm{h}'}}(1+\bm{Z}_k)^{-1} \right)^{\delta^{(\bm{h},\bm{h}')}_j}\left(\prod_{k\in J_{j,\bm{h},\bm{h}'}}1+\bm{Z}_k\right)^{\delta^{(\bm{h}',\bm{h})}_j};
\]
\[
\bm{v}^{(\bm{h}')}_j=\bm{v}^{(\bm{h})}_j\left(\prod_{k\in J_{j,\bm{h},\bm{h}'}}1+\bm{Z}_k\right)^{\delta^{(\bm{h},\bm{h}')}_j}\left(\prod_{k\in J_{j,\bm{h},\bm{h}'}}(1+\bm{Z}_k)^{-1} \right)^{\delta^{(\bm{h}',\bm{h})}_j}
\]
and the variables $\bm{Z}_i$ are identified trivially.
 
\textbf{Step 2.} We now glue in the charts corresponding to singular SYZ fibers.  For each tropical hyperplane $H\subset\T^d$, we can associate to it its dual simplex $\Delta\subset(\R^d)^*$.  (Note that $\Delta$ can have dimension less than d.) For $k\ge 1$, each $k$-dimensional face $\sigma$ of $\Delta$ corresponds to a $d-k$-dimensional tropical stratum $H_{\sigma}$ of $H$. The $0$-dimensional faces(vertices) of $\Delta$ corresponds to the tropical chambers adjacent to $H$. We will denote by $|\sigma|$ the dimension of $\sigma$, and note that $\sigma$ is itself a simplex. Let $\Delta_1,\ldots,\Delta_n$ be the dual simplexes of the tropical hyperplanes $H_1,\ldots,H_n$.

We will abuse notation and denote by $\mathcal{H}$ both the tropical hyperplane arrangement $\mathcal{H}=\{H_i\}_{i=1}^n$ and the union $\mathcal{H}=\bigcup_{i=1}^n H_i$. We define a stratification of $\mathcal{H}$ as follows. Let $\bm{\sigma}=\{\sigma_{j_1},\ldots,\sigma_{j_{\nu}}\}$ be a collection such that $\sigma_{j_i}$ is a face of $\Delta_{j_i}$, and set $|\bm{\sigma}|:=\sum_{\sigma_j\in\bm{\sigma}}|\sigma_j|$. Let $\mathcal{H}_{\bm{\sigma}}\subset\mathcal{H}$ be the set
\[
\mathcal{H}_{\bm{\sigma}}=\bigcap_{\sigma_j\in\bm{\sigma}} (H_j)_{\sigma_j}.
\]
We define the $0$-dimensional strata of $\mathcal{H}$ to be points of the form $\mathcal{H}_{\bm{\sigma}}$ where $|\bm{\sigma}|=d$. We then define the $\ell$-dimensional strata of $\mathcal{H}$ for for $\ell\ge 1$ to be the connected components of $\mathcal{H}_{\bm{\sigma}}\setminus\mathcal{H}_{\ell-1}$, where $|\bm{\sigma}|=d-\ell$, and $\mathcal{H}_{\ell-1}$ denotes the union of the $(\ell-1)$-dimensional strata $\Theta$ of $\mathcal{H}$.

Let $\{e_1,\ldots,e_d\}$ be the standard basis on $(\R^d)^*$, and denote by $<,>$ the standard pairing between $\R^d$ and $(\R^d)^*$. For each $\bm{\sigma}$ with $1\le|\bm{\sigma}|\le d$, we associate to it a collection of primitive and linearly independent vectors $\{\vec{a}^{\bm{\sigma}}_1,\ldots,\vec{a}^{\bm{\sigma}}_{\ell}\}$ parallel to $\mathcal{H}_{\bm{\sigma}}$. For each $\sigma_{j_k}\in\bm{\sigma}$, we associate to it a collection of primitive vectors $\{\vec{a}^{\sigma_{j_k}}_1,\ldots,\vec{a}^{\sigma_{j_k}}_{|\sigma_{j_k}|+1}\}$ such that $\vec{a}^{\sigma_{j_k}}_i$ is normal to the $i^{\mathrm{th}}$ facet of $\sigma_{j_k}$(notice that the number of facets of $\sigma_{j_k}$ is $|\sigma_{j_k}|+1)$, and parallel to $\bigcap_{\sigma_j\in\bm{\sigma},j\ne j_k} (H_j)_{\sigma_j}$. In particular, we can choose $\{\vec{a}^{\sigma_{j_k}}_1,\ldots,\vec{a}^{\sigma_{j_k}}_{|\sigma_{j_k}|+1}\}$ such that $\vec{a}^{\sigma_{j_k}}_{|\sigma_{j_k}|+1}=-\sum_{i=1}^{|\sigma_{j_k}|} \vec{a}^{\sigma_{j_k}}_{i}$.

For each $\ell$-dimensional stratum $\Theta$, there exists a unique collection $\bm{\sigma}=\{\sigma_{j_1},\ldots,\sigma_{j_{\nu}}\}$ such that $|\bm{\sigma}|=d-\ell$, and $\Theta\subset\mathcal{H}_{\bm{\sigma}}$. We will call a stratum $\Theta$ \textit{admissible} if $\bigcap_{k=1}^\nu H_{\R,j_k}\ne\emptyset$.  

Now, we associate to each admissible stratum $\Theta$ a chart $U_{\Theta}$ defined by
\[
U_{\Theta}=\left\{\begin{array}{l}\left((x^{(\Theta,\sigma_{j_i})}_{1},\ldots,x^{(\Theta,\sigma_{j_i})}_{|\sigma_{j_i}|+1} )_{i=1,\ldots,\nu},(y^{(\Theta)}_1,\ldots,y^{(\Theta)}_{\ell}),(\bm{Z}_1,\ldots,\bm{Z}_d)\right)\in\left(\prod_{i=1}^{\nu}\C^{|\sigma_{j_i}|+1}\right)\times(\C^{\times})^{\ell}\times (\C^{\times})^d\\
\text{ s.t. } \prod_{k=1}^{|\sigma_{j_i}|+1}x^{(\Theta,\sigma_{j_i})}_{k}=1+\bm{Z}_{j_i}, i=1,\ldots,\nu
\end{array}\right\}.
\]
We glue $U_{\Theta}$ to the resulting space from Step 1. For any tropical chamber $\mathcal{C}_{\bm{h}}$ adjacent to $\Theta$, we define an open embedding $\psi_{\bm{h},\Theta}:U_{\bm{h}}\to U_{\Theta}$ by
\[
 y^{(\Theta)}_k=\prod_{i=1}^{d}(\bm{u}^{(\bm{h})}_i)^{<e_i,\vec{a}^{\bm{\sigma}}_k>}, \quad k=1,\ldots,\ell;
\]
\[
x^{(\Theta,\sigma_{j_i})}_{k}=\prod_{m=1}^d (\bm{u}^{(\bm{h})}_m)^{<e_m,\vec{a}^{\sigma_{j_i}}_k>} \qquad i=1,\ldots,\nu;
\]
if the $k^{\mathrm{th}}$ facet of $\sigma_{j_i}$ is adjacent to the vertex of $\sigma_{j_i}$ corresponding to the chamber $\mathcal{C}_{\bm{h}}$, and
\[
x^{(\Theta,\sigma_{j_i})}_{k}=(1+\bm{Z}_{j_i})\prod_{m=1}^d (\bm{u}^{(\bm{h})}_m)^{<e_m,\vec{a}^{\sigma_{j_i}}_k>} \qquad i=1,\ldots,\nu,
\]
otherwise. The variables $\bm{Z}_i$ are identified trivially.

We denote the smooth variety obtained from this gluing by $\widetilde{\mathfrak{M}_{u,\lambda}^{\vee}}$.  We have $H^0(\widetilde{\mathfrak{M}_{u,\lambda}^{\vee}},\mathcal{O}_{\widetilde{\mathfrak{M}_{u,\lambda}^{\vee}}})=R$. Thus, the resolution $\widetilde{\mathfrak{M}_{u,\lambda}^{\vee}}\to \mathfrak{M}_{u,\lambda}^{\vee}$ is the affinization map.

Figure \ref{fig:hypertoric-gluing} shows an example of the above gluing procedure.

\begin{remark}
$\widetilde{\mathfrak{M}_{u,\lambda}^{\vee}}$ is equipped with a holomorphic symplectic form $\sum_{i=1}^d  d\log\bu_i\wedge d\log\bm{Z}_i$ and a holomorphic volume form
\[
\Omega^{\vee}=\prod_{i=1}^d d\log\bu_i\wedge d\log\bm{Z}_i
\]
which is preserved under the change of coordinates (up to signs).
Let $\gamma\in H_{2d}(\widetilde{\mathfrak{M}_{u,\lambda}^{\vee}})$ and consider the period integrals
\begin{equation}
\label{period}
\int_{\gamma}\Omega^{\vee}.
\end{equation}
Let $\bm{\pi}=(\bm{Z_1},\ldots,\bm{Z}_d):\widetilde{\mathfrak{M}_{u,\lambda}^{\vee}}\to (\C^{\times})^d$ be projection map. $\bm{\pi}$ is a $(\C^{\times})^d$-fibration over base $(\C^{\times})^d$. Denote by $\bm{H}$ the union of multiplicative hyperplanes
\[
\bm{H}=\bigcup_{i=1}^n \{\bm{Z}\in(\C^{\times})^d|q_i\bm{Z}^{\hat{\lambda}_{\R,i}}_i=-1\},
\]
where $q_i=1$, for $i=1,\ldots,d$ and $q_{\ell}=q^{\beta_{S_{\ell}}}$, for $\ell=d+1,\ldots,n$. The period integrals (\ref{period}) reduces to integration over relative cycles $\gamma'\in H_d((\C^{\times})^d,\bm{H})$, 
\[
\int_{\gamma'}\prod_{i=1}^d d\log\bm{Z}_i
\]
by dimension reduction similar to the case of toric varieties (see e.g. \cite{CLT11}). In \cite{MS}, Mcbreen and Shenfeld observed that certain period integrals on $(\C^{\times})^d$ with local coefficients satisfy the same GKZ system as the $T^d\times\C^{\times}$-equivariant quantum cohomology.
\end{remark}

\begin{figure}[htb!]
	\includegraphics[scale=0.65]{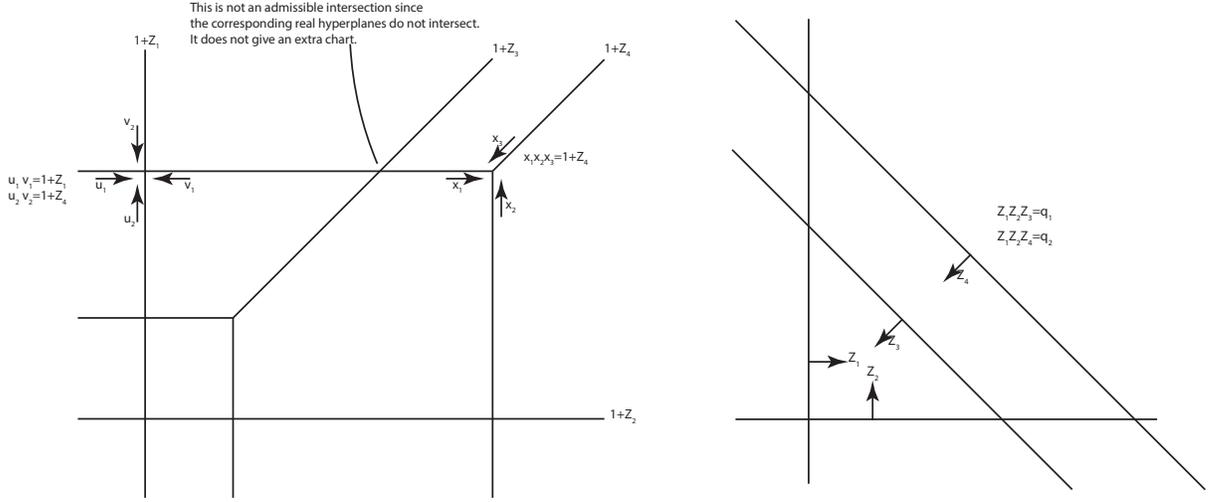}
	\caption{An example of gluing.  The left is the tropical hyperplane arrangement, and the right is the real hyperplane arrangement.  Each admissible intersection stratum gives a local chart and they are glued to adjacent chambers according to the vectors associated to the variables.}
	\label{fig:hypertoric-gluing}
\end{figure}

		\subsection{Multiplicative hypertoric varieties} \label{sec:multiplicative}
		In this subsection, we show that
		smooth multiplicative hypertoric varieties (see e.g.\cite{ganev14})  provide alternative resolutions to our SYZ mirrors.
		
		Let's first review the construction of multiplicative hypertoric varieties. Let 
		\[
		(T^*\mathbb{C}^{n})^{\circ}=\{(z,w)\in T^*\mathbb{C}^n|1+z_iw_i\ne 0, i=1,\ldots,n\}.
		\]
		We equip $(T^*\mathbb{C}^{n})^{\circ}$ with the holomorphic symplectic form
		\[
		\omega^{\circ}=\sum_i^n \cfrac{dz_i\wedge dw_i}{1+z_iw_i}.
		\] 
		Let $\vec{t}=(t_1,\ldots,t_n)\in (\mathbb{C}^{\times})^n$ act on $(T^*\mathbb{C}^{n})^{\circ}$ by
		\[
		\vec{t}\cdot(z,w)=(t_1z_1,t_1^{-1}w_1,\ldots,t_nz_n,t_n^{-1}w_n).
		\]
		This action comes with a $(\mathbb{C}^{\times})^n$-valued moment map(for the general theory of Lie-group valued moment maps, see \cite{AMM}) $\tilde{\bm{\mu}}:(T^*\mathbb{C}^{n})^{\circ}\to (\mathbb{C}^{\times})^n$ given by
		\[
		\tilde{\bm{\mu}}(z,w)=\left((1+z_1w_1),\ldots,(1+z_nw_n)\right).
		\]
	    Let $K_{\C}\subset (\mathbb{C}^{\times})^n$ be the subtorus defined by the collection of vectors $u$ as in Section \ref{sec:hypertoric}. Let $(\iota^*_{ij})_{1\le i\le n-d, 1\le j\le n}$ be the matrix associated to $\iota^*:(\mathfrak{t}^n)^*\to \mathfrak{k}^*$.
		The multiplicative moment map $\bm{\mu}:(T^*\mathbb{C}^{n})^{\circ}\to K_{\C}$ of the $(\mathbb{C}^{\times})^n$-action on  $(T^*\mathbb{C}^{n})^{\circ}$ restricted to $K_{\C}$ is given by
		\[
		\bm{\mu}(z,w)=\left(\prod_{j=1}^n (1+z_jw_j)^{\iota^*_{1j}},\ldots,\prod_{j=1}^n (1+z_jw_j)^{\iota^*_{(n-d)j}}\right).
		\]
		Let $\eta=(\eta_1,\ldots,\eta_{n-d})\in K_{\C}$, and let $\chi:K_{\C}\to \C^{\times}$ be a character. We define a \textit{multiplicative hypertoric variety} to be the GIT quotient 
		\[
		X_{u,\chi,\eta}=\bm{\mu}^{-1}(\eta)//_{\chi} K_{\C}.
		\]
		or equivalently,
		\[
		X_{u,\chi,\eta}=\mathrm{Proj}\left(\bigoplus_{k\ge 0} \mathcal{O}\left(\bm{\mu}^{-1}(\eta)\right)^{\chi^k}\right).
		\]
		
		Set   $q=\left((-1)^{\sigma_{d+1}+1}q^{\beta_{S_{d+1}}},\ldots,(-1)^{\sigma_{n}+1}q^{\beta_{S_n}}\right)\in K_{\C}$, where $\sigma_{\ell}$ is the parity of $\sum_{i=1}^d a_{\ell i}$, and $a_{\ell i}$ are coefficients in $u_{\ell}=\sum_{i=1}^d a_{\ell i}u_i$. Consider the multiplicative hypertoric variety $X_{u,0,q}$. We have
		\[
		X_{u,0,q}=\mathrm{Spec}\left(\C[\bm{\mu}^{-1}(q)]^{K_{\C}}\right),
		\] 
		where $\C[\bm{\mu}^{-1}(q)]^{K_{\C}}$ denotes the $K_{\C}$-invariant subring of $\C[\bm{\mu}^{-1}(q)]$.
		
		Let $\Pi=(\pi^*_{ji})_{1\le j\le n, 1\le i\le d}$ be the matrix associated to the map $\pi^*:(\mathfrak{t}^d)^*\to (\mathfrak{t}^n)^*$ with respect to the ordered basis $u_1,\ldots,u_d$. $(\pi^*_{ji})_{1\le j,i\le d}$ is the identity $d$ by $d$ matrix. Since $\Pi$ is \textit{totally unimodular}, the remaining entries take values in $\{-1,0,1\}$. The columns of $\Pi$ correspond to $K_{\C}$-invariant polynomials $\bm{z}_i=\prod_{j=1}^n x_{ij}^{|\pi^*_{ji}|}$ and $\bm{w}_i=\prod_{j=1}^n y_{ij}^{|\pi^*_{ji}|}$, where $x_{ij}=z_j$, $y_{ij}=w_j$ if $\pi^*_{ji}\ge 0$, and $x_{ij}=w_j$, $y_{ij}=z_j$ if $\pi^*_{ji}<0$. Denote by $S$ the multiplicative system generated by $\bm{z}_i,\bm{w}_i$, and $z_iw_i$ for $i=1,\ldots,d$.
		
		\begin{lemma}
		\label{lemma:gen}
        $S^{-1}\C[\bm{\mu}^{-1}(q)]^{K_{\C}}$ is generated by $\bm{z}_i^{\pm 1},\bm{w}_i^{\pm 1}$, and $(z_iw_i)^{\pm 1}$ for $i=1,\ldots,d$.
		\end{lemma}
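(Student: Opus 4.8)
The plan is to trade the $K_{\C}$-action for the full $(\C^{\times})^n$-action on $(T^*\C^n)^\circ$ and exploit the induced grading. Write $A:=S^{-1}\C[\bm{\mu}^{-1}(q)]$, set $p_i:=z_iw_i$, and let $\mathcal R\subseteq A$ be the subring generated by $\bm{z}_i^{\pm1},\bm{w}_i^{\pm1},p_i^{\pm1}$; the claim is $\mathcal R=A^{K_{\C}}$. Since the multiplicative moment map is $(\C^{\times})^n$-invariant, the $(\C^{\times})^n$-action descends to $\bm{\mu}^{-1}(q)$ and grades $\C[\bm{\mu}^{-1}(q)]$ by the character lattice $(\mathfrak{t}^n_{\Z})^*$. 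By construction $\bm{z}_i$ (the $i$-th column of $\Pi$) is an eigenvector of weight $\pi^*(\check u_i)$, $\bm{w}_i$ of weight $-\pi^*(\check u_i)$, and $p_i$ of weight $0$; hence $S$ consists of homogeneous elements and $A$ inherits the grading. Because $(\C^{\times})^n$ acts on each graded piece $A_\chi$ through the character $\chi$, it acts on $A_\chi$ through $K_{\C}$ by $\iota^*(\chi)$, and $\iota^*$ identifies the character lattice of $K_{\C}$ with $(\mathfrak{t}^n_{\Z})^*/\operatorname{im}\pi^*$; therefore $A^{K_{\C}}=\bigoplus_{c\in(\mathfrak{t}^d_{\Z})^*}A_{\pi^*(c)}$.

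Because $\bm{z}_i$ is a unit of weight $\pi^*(\check u_i)$ and the $\pi^*(\check u_i)$ form a $\Z$-basis of $\operatorname{im}\pi^*$, multiplication by $\prod_i\bm{z}_i^{c_i}$ identifies the weight-$0$ piece $A_0:=A^{(\C^{\times})^n}$ with $A_{\pi^*(c)}$; hence $A^{K_{\C}}=A_0[\bm{z}_1^{\pm1},\dots,\bm{z}_d^{\pm1}]$ (the subring generated by $A_0$ and the $\bm{z}_i^{\pm1}$). As $\bm{z}_i^{\pm1}\in\mathcal R$ and $\bm{w}_i=\bm{z}_i^{-1}\prod_{k\in\bm{i}}p_k\in A_0[\bm{z}_i^{\pm1}]$ (using that $p_\ell=z_\ell w_\ell\in A_0$ for $\ell>d$), we already get $\mathcal R\subseteq A^{K_{\C}}$, so everything comes down to the inclusion $A_0\subseteq\mathcal R$. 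To identify $A_0$, I would rewrite the equations cutting out $\bm{\mu}^{-1}(q)$ in the basis $\{\pi^*(\check u_1),\dots,\pi^*(\check u_d),\check e_{d+1},\dots,\check e_n\}$ of $(\mathfrak{t}^n_{\Z})^*$; they become $1+z_\ell w_\ell=c_\ell\prod_{j\le d}(1+z_jw_j)^{a_{\ell j}}$ for $\ell>d$, with $c_\ell$ a nonzero constant determined by $q$. Consequently $\C[\bm{\mu}^{-1}(q)]^{(\C^{\times})^n}=P:=\C[p_1,\dots,p_d][(1+p_j)^{-1}:j\le d]$, the class of $z_\ell w_\ell$ being $\phi_\ell:=c_\ell\prod_j(1+p_j)^{a_{\ell j}}-1$ for $\ell>d$; and taking weight-$0$ parts of $A$ exhibits $A_0$ as $P$ localized at the $p_i$ and at the elements $\bm{z}_i\bm{w}_i\,p_i^{-1}=\prod_{\ell>d,\,a_{\ell i}\ne0}\phi_\ell$, for $i=1,\dots,d$.

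Thus $A_0\subseteq\mathcal R$ amounts to checking that $p_i^{\pm1}$, $(1+p_j)^{-1}$, and $\bigl(\prod_{\ell>d,\,a_{\ell i}\ne0}\phi_\ell\bigr)^{-1}$ all lie in $\mathcal R$. The first and third are immediate: $p_i\in S$, and $\prod_{\ell>d,\,a_{\ell i}\ne0}\phi_\ell=\bm{z}_i\bm{w}_i\,p_i^{-1}$ is a unit of $\mathcal R$, so in particular every individual $\phi_\ell$ with $\ell>d$, being a factor of a unit, is itself a unit of $\mathcal R$. The hard part will be the remaining point — putting the ``toric'' functions $(1+z_jw_j)^{-1}$ into $\mathcal R$: for this I would substitute $z_\ell w_\ell=\phi_\ell$ into the identity $\bm{z}_i\bm{w}_i=p_i\prod_{\ell>d,\,a_{\ell i}\ne0}\bigl(c_\ell\prod_j(1+p_j)^{a_{\ell j}}-1\bigr)$ and solve for the individual $(1+p_j)^{-1}$, using that the $\phi_\ell$ and $p_i$ are already units in $\mathcal R$ while each $1+p_j$ is already a polynomial element of $\mathcal R$. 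This is exactly where the total unimodularity of $\Pi$ and the chosen form of $q$ must be used, and I expect it to be the most delicate step. Granting it, $A_0\subseteq\mathcal R$, and hence $A^{K_{\C}}=A_0[\bm{z}_1^{\pm1},\dots,\bm{z}_d^{\pm1}]=\mathcal R$.
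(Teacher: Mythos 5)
Your reduction via the $(\mathfrak{t}^n_{\Z})^*$-grading to the single inclusion $A_0\subseteq\mathcal{R}$ is clean and genuinely different in organization from the paper's argument, but the proposal stops exactly where all the content lies: you never prove that $(1+z_jw_j)^{-1}\in\mathcal{R}$; you describe a substitution you ``would'' perform, call it the most delicate step, and then grant it. That alone makes the proof incomplete. Worse, under your own reading of $\C[\bm{\mu}^{-1}(q)]$ --- the full coordinate ring of the affine open $\bm{\mu}^{-1}(q)\subset (T^*\C^n)^{\circ}$, which does contain $(1+z_jw_j)^{-1}$ --- the missing step is false, so it cannot be filled in. Take $n=2$, $d=1$, $u_1=u_2$ (the $A_1$ case): the moment map equation is $1+z_1w_1=q(1+z_2w_2)$ with $q\ne 0$, and writing $a=\bm{z}_1=z_1z_2$, $b=\bm{w}_1=w_1w_2$, $p=z_1w_1$, one has $qab=p(1+p-q)$ on $\bm{\mu}^{-1}(q)$. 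Hence the subring $\mathcal{R}$ generated by $a^{\pm1},b^{\pm1},p^{\pm1}$ is $\C[a^{\pm1},p^{\pm1},(1+p-q)^{\pm1}]$, and since $1+p$ is coprime to both $p$ and $1+p-q$ (as $q\ne0$), the invariant regular function $(1+p)^{-1}$ does not lie in $\mathcal{R}$. A smaller instance of the same problem: you assert each $\phi_\ell$ is a unit of $\mathcal{R}$ ``being a factor of a unit,'' but a factor of a unit is a unit of the subring only if the complementary factor already lies in $\mathcal{R}$, which is part of what has to be shown.

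The paper's proof never meets this obstruction because it is a monomial-by-monomial argument: it takes a Laurent monomial $f=\prod_i z_i^{a_i}w_i^{b_i}$, strips off factors $\bm{z}_i^{\pm1},\bm{w}_i^{\pm1}$, and observes that the remaining exponent vector $\langle a_1-b_1,\ldots,a_n-b_n\rangle$ either fails to lie in $\operatorname{Im}\pi^*=\Kernel\iota^*$ (so $f$ is not $K_{\C}$-invariant) or vanishes (so $f$ is a product of the $(z_iw_i)^{\pm1}$). In effect it works with the subalgebra generated by the coordinate functions $z_i,w_i$ (localized at $S$), so the function $(1+z_jw_j)^{-1}$ never has to be generated; your route, by contrast, builds that function into $A_0$ from the start, and that is precisely where it breaks. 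If you adopt the monomial reading, your grading framework can be salvaged, but then $A_0$ is the appropriate localization of the ring generated by $p_1,\ldots,p_n$ (not your $P=\C[p_1,\ldots,p_d][(1+p_j)^{-1}]$), and the remaining task is to show each $p_\ell$, $\ell>d$, lies in $\mathcal{R}$ --- a nontrivial verification using the moment map relations that you would still have to carry out rather than defer.
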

	
	    \begin{proof}
	    Let $f=\prod_{i=1}^n z_i^{a_i}\prod_{i=1}^n w_i^{b_i}$ be an arbitrary nonconstant Laurent monomial in $S^{-1}\C[\bm{\mu}^{-1}(q)]$. If $f$ is not divisible by neither $\bm{z}_i^{\pm 1}$ nor $\bm{w}_i^{\pm 1}$ for $i=1,\ldots,d$, then the vector $\langle a_1-b_1,\ldots,a_n-b_n\rangle$ is not in the kernel of $\iota^*:(\mathfrak{t}^n)^*\to \mathfrak{k}^*$ unless it is the zero vector. In the first case, $f$ is not $K_{\C}$-invariant, while in the second case, $f$ is a product of $(z_iw_i)^{\pm 1}$.
	    \end{proof}
	
		\begin{prop}
		\label{prop:birational}
		For a generic choice of $\chi$, $X_{u,\chi,q}$ is a resolution of $\mathfrak{M}_{u,\lambda}^{\vee}$. 
		\end{prop}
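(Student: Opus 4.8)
The plan is to combine Lemma \ref{lemma:gen} with an explicit change of variables and with the standard behaviour of variation of GIT quotients. I would argue in three steps: identify $X_{u,0,q}$ with $\mathfrak{M}_{u,\lambda}^{\vee}$; show that $X_{u,\chi,q}$ is smooth for generic $\chi$; and then observe that the affinization morphism $X_{u,\chi,q}\to X_{u,0,q}$ is a resolution.

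\emph{Step 1 (the birational identification).} On $\bm{\mu}^{-1}(q)$ the equation $\bm{\mu}=q$ unwinds, circuit by circuit, to $1+z_\ell w_\ell=(-1)^{\sigma_\ell+1}q^{\beta_{S_\ell}}\prod_{i=1}^d(1+z_iw_i)^{a_{\ell i}}$ for $\ell=d+1,\ldots,n$. I would introduce the $K_\C$-invariant units $\bm{Z}_i:=-(1+z_iw_i)$ for $i=1,\ldots,d$, and set $\bm{Z}_\ell:=q^{\beta_{S_\ell}}\prod_i\bm{Z}_i^{a_{\ell i}}$ for $\ell>d$ as in Section \ref{sec:GF}; a short computation --- in which the sign $(-1)^{\sigma_\ell+1}$ built into $q$ is precisely what makes things fit --- shows $1+\bm{Z}_k=-z_kw_k$ on $\bm{\mu}^{-1}(q)$ for every $k\in\{1,\ldots,n\}$. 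Combined with the identity $\bm{z}_i\bm{w}_i=\prod_{j\in\bm{i}}z_jw_j$ (recall $\bm{i}=\{j:u_j^{(i)}\ne0\}$), this gives $\bm{z}_i\cdot\big((-1)^{|\bm{i}|}\bm{w}_i\big)=\prod_{j\in\bm{i}}(1+\bm{Z}_j)$, which is exactly the defining relation of $\mathfrak{M}_{u,\lambda}^{\vee}$ in Theorem \ref{thm:SYZmir}. By Lemma \ref{lemma:gen}, $S^{-1}\C[\bm{\mu}^{-1}(q)]^{K_\C}$ is generated by $\bm{z}_i^{\pm1},\bm{w}_i^{\pm1},(z_iw_i)^{\pm1}$; hence the assignment $\bm{u}_i\mapsto\bm{z}_i$, $\bm{v}_i\mapsto(-1)^{|\bm{i}|}\bm{w}_i$, $\bm{Z}_i\mapsto-(1+z_iw_i)$ yields an isomorphism $\C[\mathfrak{M}_{u,\lambda}^{\vee}]\isomto\C[\bm{\mu}^{-1}(q)]^{K_\C}=\C[X_{u,0,q}]$ (one must still check that these elements generate and that the $d$ conic equations are the only relations), so $X_{u,0,q}\cong\mathfrak{M}_{u,\lambda}^{\vee}$; in particular every $X_{u,\chi,q}$ is birational to $\mathfrak{M}_{u,\lambda}^{\vee}$.

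\emph{Steps 2--3 (smoothness and properness).} Under the standing unimodularity hypothesis, $\bm{\mu}^{-1}(q)$ is a smooth irreducible affine variety for our $q$ --- the Kähler parameters $q^{\beta_{S_\ell}}$ may be perturbed to be generic, so that the multiplicative hyperplane arrangement $\bm{H}$ is simple --- and, for $\chi$ outside a finite union of rational hyperplanes in $\Hom(K_\C,\C^\times)\otimes\R$, every $\chi$-semistable point of $\bm{\mu}^{-1}(q)$ is $\chi$-stable with trivial $K_\C$-stabilizer; this is the multiplicative counterpart of Theorem \ref{thm:smooth} and is worked out in \cite{ganev14}. Hence $X_{u,\chi,q}=\bm{\mu}^{-1}(q)//_\chi K_\C$ is a geometric quotient of a smooth variety by a free $K_\C$-action, so it is smooth and irreducible. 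Finally, writing $X_{u,\chi,q}=\mathrm{Proj}\big(\bigoplus_{k\ge0}\mathcal{O}(\bm{\mu}^{-1}(q))^{\chi^k}\big)$ exhibits the affinization map $p:X_{u,\chi,q}\to \Spec\,\C[\bm{\mu}^{-1}(q)]^{K_\C}=X_{u,0,q}$ as projective, hence proper; it is dominant between irreducible varieties of the same dimension $2d$ with identical function fields, hence birational and surjective. Composing $p$ with the isomorphism $X_{u,0,q}\cong\mathfrak{M}_{u,\lambda}^{\vee}$ of Step 1 realizes $X_{u,\chi,q}$ as a smooth variety equipped with a proper birational morphism onto $\mathfrak{M}_{u,\lambda}^{\vee}$, i.e. a resolution.

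The step I expect to be the main obstacle is Step 2: one must confirm that the particular $q$ dictated by the symplectic geometry of $\mathfrak{M}_{u,\lambda}$ lies in the locus where $\bm{\mu}^{-1}(q)$ is smooth (this is where unimodularity, together with the freedom to perturb $\bar\omega$ --- equivalently the $q^{\beta_{S_\ell}}$ --- enters), and one must transcribe the additive GIT stability and freeness arguments to the group-valued moment map $\bm{\mu}$. The total unimodularity of the matrix $\Pi$ recorded just before Lemma \ref{lemma:gen} is exactly what makes both of these go through, and I would invoke \cite{ganev14} for these technical points.
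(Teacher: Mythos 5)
Your route is essentially the paper's: the same change of variables ($\bm{u}_i\mapsto\pm\bm{z}_i$, $\bm{v}_i\mapsto\pm\bm{w}_i$, $\bm{Z}_i\mapsto-(1+z_iw_i)$, the signs being equivalent bookkeeping), the same use of Lemma \ref{lemma:gen}, smoothness of $X_{u,\chi,q}$ for generic $\chi$ via \cite{ganev14}, and the affinization map. The genuine gap is the upgrade in your Step 1 from birationality to an isomorphism $X_{u,0,q}\cong\mathfrak{M}_{u,\lambda}^{\vee}$. Lemma \ref{lemma:gen} only controls the \emph{localized} ring $S^{-1}\C[\bm{\mu}^{-1}(q)]^{K_{\C}}$, so what it yields is exactly what the paper proves: $\varphi$ becomes an isomorphism after inverting $\bm{u}_i,\bm{v}_i,1+\bm{Z}_i$ (resp.\ $S$), i.e.\ $\varphi_*:X_{u,0,q}\to\mathfrak{M}_{u,\lambda}^{\vee}$ is birational. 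The unlocalized isomorphism is false in general. For $T^*\C\bP^2$ (so $K_{\C}$ is the diagonal of $(\C^{\times})^3$) one has $\bm{z}_1=z_1w_3$, $\bm{w}_1=w_1z_3$, $\bm{z}_2=z_2w_3$, $\bm{w}_2=w_2z_3$, and the $K_{\C}$-invariant function $z_1w_2$ on $\bm{\mu}^{-1}(q)$ does not lie in the subring generated by these together with $(1+z_iw_i)^{\pm1}$: on the locus $z_3=w_3=0$ every generator depends only on $z_1w_1$ and $z_2w_2$, whereas $z_1w_2$ separates the points $(z_1,w_1,z_2,w_2)=(a,1,b,1)$ and $(1,a,1,b)$, which satisfy the moment map equation whenever $(1+a)(1+b)$ equals the prescribed value and lie in distinct closed $K_{\C}$-orbits for $a\ne b$. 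So $\varphi$ is not surjective and $X_{u,0,q}\not\cong\mathfrak{M}_{u,\lambda}^{\vee}$. This also undercuts your final sentence as stated: $X_{u,0,q}\to\mathfrak{M}_{u,\lambda}^{\vee}$ is a non-finite birational morphism of affine varieties, hence not proper, so the composite $X_{u,\chi,q}\to\mathfrak{M}_{u,\lambda}^{\vee}$ is not a proper birational map. The proposition has to be read as the paper does (cf.\ the introduction: ``birational to''), namely as producing a smooth variety with a birational morphism to $\mathfrak{M}_{u,\lambda}^{\vee}$, not a resolution in the strict proper sense.

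Two further cautions on your Step 2. You may not ``perturb the K\"ahler parameters to be generic'': $q$ is determined by $\bar{\omega}$ and already enters the definition of $\mathfrak{M}_{u,\lambda}^{\vee}$, so perturbing it changes the target of the proposition. Moreover $\bm{\mu}^{-1}(q)$ itself need not be smooth for non-generic $q$ (already for $n=2$, $d=1$ the fiber $(1+z_1w_1)(1+z_2w_2)=1$ is singular at the origin); smoothness of $X_{u,\chi,q}$ for generic $\chi$ comes from the unstable locus discarding such points, not from smoothness of the level set. The paper's handling of the non-generic case is the one to adopt: for generic K\"ahler parameters $X_{u,\chi,q}$ is smooth and independent of $\chi$, and otherwise one takes generic $\chi$, uses the proper affinization map $X_{u,\chi,q}\to X_{u,0,q}$, and composes with the birational $\varphi_*$.
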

	
	    \begin{proof}	
	    We have a ring homomorphism $\varphi:R\to\C[\bm{\mu}^{-1}(q)]^{K_{\C}}$ given by
	    \[
	    \varphi(\bm{u}_i)=(-1)^{\mathrm{sgn}\left(\sum_{j=1}^n |\pi^*_{ji}|\right)}\bm{z}_i,\quad \varphi(\bm{v}_i)=\bm{w}_i,\quad \varphi(\bm{Z}_{i})=-1-z_iw_i,\text{ for } i=1,\ldots,d.
	    \]
	    Denote by $R'$ the ring obtained by localizing $R$ at the multiplicative system generated by $\bm{u}_i,\bm{v}_i$, and $1+\bm{Z}_i$ for $i=1,\ldots,d$. The induced map $\varphi_*:X_{u,0,q}\to\mathfrak{M}_{u,\lambda}^{\vee}$ is birational since $\varphi$ descends to a ring isomorphism $R'\cong S^{-1}\C[\bm{\mu}^{-1}(q)]^{K_{\C}}$. When the Kähler parameters of $\mathfrak{M}_{u,\lambda}$ are generic (i.e. $\mathcal{H}_{\R}$ is simple), $X_{u,\chi,q}$ is smooth and is independent of $\chi$, and therefore we have a resolution of $\mathfrak{M}_{u,\lambda}^{\vee}$ by $X_{u,\chi,q}$. On the other hand, if the Kähler parameters are not generic, $X_{u,0,q}$ is singular. However, the affinization map $X_{u,\chi,q}\to X_{u,0,q}=\Spec{H^0(X_{u,\chi,q},\mathcal{O}_{X_{u,\chi,q}})}$ is a resolution. In this case, the composition $X_{u,\chi,q}\to X_{u,0,q}\to\mathfrak{M}_{u,\lambda}^{\vee}$ is a resolution.	    
	    \end{proof}

		\bibliographystyle{amsalpha}
		\bibliography{geometry}
	\end{document}